\documentclass{amsart}


\usepackage{amssymb,amsmath, amscd, mathrsfs, yfonts, bbm,
  graphics, dsfont, footmisc, tikz, amsthm}

\usepackage{graphicx, caption}
\usepackage{changepage}
\input xy
\xyoption{all}



\newtheorem{claim}{}[section]
\newtheorem{defn}[claim]{Definition}
\newtheorem{thm}[claim]{Theorem}
\newtheorem{lemma}[claim]{Lemma}
\newtheorem{remark}[claim]{Remark}

\newtheorem{cor}[claim]{Corollary}

\newtheorem*{thmnn}{Theorem}


\newcommand{\cJ}{{J}(m)}

\newcommand{\Ltr}{{\reflectbox{$\Gamma$}}}

\newcommand{\ltr}{{\reflectbox{\tiny$\Gamma$}}}

\newcommand{\oj}{\vec{j}}

\newcommand{\ou}{\vec{u}}
\newcommand{\os}{\vec{\sigma}}


\newcommand{\ab}{\allowbreak}

\newcommand{\itd}{\textit{\noindent\romannumeral\day.%
\romannumeral\month.\romannumeral\year}}


\newcommand{\Tr}{\mathrm{Tr}}
\newcommand{\tr}{\mathrm{tr}}


\newcommand{\cA}{\mathcal{A}}

\newcommand{\cI}{\mathcal{I}( m)}
\newcommand{\I}{\mathcal{I}}

\newcommand{\cP}{\mathcal{P}}





\title[Partial Transpose and Asymptotic Freeness] {The
  Partial Transpose and Asymptotic Free Independence for
  Wishart Random Matrices:\\ Part II}

\author[mingo and popa]{James A. Mingo$^{(*)}$ and Mihai Popa$^{(**)} $ }

\address{Department of Mathematics and Statistics, Queen's
  University, Jeffery Hall, King\-ston, Ontario, K7L 3N6,
  Canada} \email{mingo@mast.queensu.ca}

\thanks{$^{(*)}$Research supported by a Discovery Grant from the
Natural Sciences and Engineering Research Council of Canada. \hfill \itd}

\thanks{$^{ (* *) }$ Research supported by the Simons
	Foundation grant No. 360242.}

\address{Department of Mathematics, University of Texas at
  San Antonio, One UTSA Circle San Antonio, Texas 78249,
  USA, and}

\address{``Simon Stoilow'' Institute of Mathematics of the
  Romanian Academy, P.O. Box 1-764, 014700 Bucharest,
  Romania}

\email{mihai.popa@utsa.edu}

\begin{document}

\begin{abstract}
Using new combinatorial techniques, we significantly improve
the previous results on asymptotic distributions and asymptotic free independence relations of partial transposes of Wishart random matrices. In particular, we give a necessary
and sufficient condition for the asymptotic free
independence of partial transposes of Wishart matrices with
difference block sizes.
\end{abstract}

\maketitle

\section{Introduction and Statement of Results}

Since their introduction in the first half of the 19th century (probably in \cite{wishart}), Wishart matrices have been heavily used and studied in high dimensional statistics, in connection to problems arising from multivariate analysis of variance (see \cite{muirhead}, \cite{edelman}). In recent years, Wishart random matrices and their partial transposes appeared in the literature on quantum information theory (see \cite{aubrun}, \cite{aubrun2}) in connection with entanglement properties. This motivated the study of the asymptotic behavior of partial transposes, a subject that was not addressed much by the existing work. 

An intuitive definition of partial transposes is as follows.
A
$ bd \times bd $
matrix, 
$ X $
can be seen as 
$ b \times b $
block matrix, each entry being a 
$ d \times d $ 
matrix.
The $(b, d)$-partial transpose of 
$ X $,
here denoted by
$ X^{\Gamma(b, d)} $,
is obtained by transposing each 
$ d \times d $ 
block, without modifying the position of the blocks.
For example 
\[\begin{pmatrix} A_{11} & A_{12} \\ A_{21} & A_{22} \end{pmatrix}^{\Gamma(2,d)}
\kern-1em =
\begin{pmatrix} A_{11}^\mathrm{T} & A_{12}^\mathrm{T} \\ 
                A_{21}^\mathrm{T} & A_{22}^\mathrm{T} \end{pmatrix}.\]

In \cite{aubrun} it is shown that, for 
$ W $ 
a Wishart random matrix, the asymptotic distribution of 
$ W^{ \Gamma(N, N)} $
is shifted semicircular. When 
$ b $ 
is fixed and 
$ d \rightarrow \infty $, the asymptotic distribution of 
$ W^{ \Gamma(b, d)} $ was computed in 
\cite{banica-nechita} (see also \cite{arizmendi} and \cite{fs}). 
In this case the limit distribution is the rescaled free difference of two Marchenko-Pastur distributions. 
In part I of this series, \cite{mingo-popa-wishart}, we computed the asymptotic distribution of 
$ W^{ \Gamma(b, d)} $
for general 
$ b $ and $ d $
and described the asymptotic relations between
$ W $, $ W^{ \Gamma(b, d)} $
and their transposes. In particular, it is shown that
$ W $ 
and 
$ W^{ \Gamma(b, d)} $
are asymptotically free if and only if
 $ d \rightarrow \infty $, 
 while
 $ W $ and
 $ W^{\ltr(b, d)}$, 
 the matrix transpose of 
   $ W^{\Gamma(b, d)} $,
 are asymptotically free if and only if 
 $ b \rightarrow \infty $.

%
%
%
%
%
%

The present paper uses new combinatorial techniques, inspired by \cite{popa-guassian}, to significantly improve the results from Part I. Among other (more technical) questions, we address the asymptotic freeness relations between different partial transposes of the same Wishart random matrix. More precisely, the necessary and sufficient condition below is a particular case of the main result (see Theorem \ref{thm:main} and Corollary \ref{cor:4:15}).

\begin{thmnn}
Suppose that 
$( b_N)_N $,
 $ ( d_N ) _N $
 $ (b_N^\prime)_N $
 and
 $ (d_N^\prime)_N $ 
 are four non-decreasing sequences of positive integers such that
 $ b_N \cdot d_N = b_N^\prime \cdot d_N^\prime  = M_N$ 
 for each 
 $ N $,
  and that
 $ \displaystyle \lim_{N \rightarrow \infty}
  b_N \cdot d_N = \infty $. 
  Suppose also that each of the permutations 
  $ \gamma ( b_N, d_N ) $ 
  and 
  $ \gamma (b_N^\prime, d_N^\prime ) $
  is either
   the partial transpose
   $ \Gamma (b_N, d_N) $, 
   respectively
    $ \Gamma (b_N^\prime, d_N^\prime) $
    or its matrix transpose
    $ \Ltr (b_N, d_N) $, 
    respectively
    $ \Ltr( b_N^\prime, d_N^\prime ) $. 
    
    Then
     $ W^{\gamma( b_N, d_N)} $ 
     and
      $ W^{ \gamma(b_N^\prime, d_N^\prime)} $
      are asymptotically free if and only if the entry permutation
      $ \gamma( b_N , d_N)^{ -1} \circ \gamma(b_N^\prime, d_N^\prime ) $ 
      has
      $ o(M_N^2) $
      fixed points.
\end{thmnn}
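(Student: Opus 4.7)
The plan is to verify asymptotic free independence by computing alternating centered mixed moments and identifying the permutation $\sigma_N := \gamma(b_N,d_N)^{-1}\circ\gamma(b_N',d_N')$ of matrix-entry positions as the combinatorial object governing the interaction of the two partial transposes. First note that these permutations act on $\{1,\ldots,M_N\}^2$, so fixed points of $\sigma_N$ correspond to entries $(i,j)$ where the two partial transposes coincide: $(W^{\gamma(b_N,d_N)})_{ij} = (W^{\gamma(b_N',d_N')})_{ij}$. As a sanity check against Part I, when $\gamma(b,d)=\Gamma(b,d)$ and one compares against $W$ itself, one finds $|\mathrm{Fix}(\sigma)|/M^2 = 1/d$, recovering the criterion $d\to\infty$; when $\gamma(b,d)=\Ltr(b,d)$ one gets $1/b$, recovering $b\to\infty$.

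For the main argument I would expand a general mixed trace
\[
\frac{1}{M_N}\,E\bigl[\tr\bigl(W^{\gamma_{\epsilon_1}}\cdots W^{\gamma_{\epsilon_k}}\bigr)\bigr],\qquad \epsilon_j\in\{1,2\},
\]
with $\gamma_1=\gamma(b_N,d_N)$ and $\gamma_2=\gamma(b_N',d_N')$, as a sum over index tuples, and substitute $W=GG^*$ for an $M_N\times M_N$ Ginibre matrix $G$. The Gaussian Wick formula rewrites this as a weighted sum over pair partitions of the $2k$ Gaussian factors, where each pairing contributes a power of $M_N$ whose exponent is the number of cycles of a product of permutations built from the pairing's geodesic permutation and the $\gamma_{\epsilon_j}$'s. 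This is the perspective of \cite{popa-guassian} and \cite{mingo-popa-wishart}.

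For sufficiency, assume $|\mathrm{Fix}(\sigma_N)|=o(M_N^2)$. Whenever two consecutive letters in the word agree, the transition factor $\gamma_{\epsilon_j}^{-1}\gamma_{\epsilon_{j+1}}$ is the identity and contributes as in a single-Wishart moment; whenever they differ it equals $\sigma_N^{\pm1}$, and a pairing can saturate the exponent of $M_N$ at that transition only when the participating index pair lies in $\mathrm{Fix}(\sigma_N)$. The total contribution of such saturating configurations that deviate from the free-independence pattern is controlled by $|\mathrm{Fix}(\sigma_N)|/M_N^2=o(1)$, so centered alternating moments vanish in the limit. For the converse, if $|\mathrm{Fix}(\sigma_N)|\ge cM_N^2$ for some $c>0$, I would exhibit a two-letter covariance-type quantity whose fixed-point contribution gives an extra term of order $c$ that the free-independence formula cannot absorb, obstructing asymptotic freeness.

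The hard part will be the leading-order bookkeeping for long alternating words: the exponent of $M_N$ is determined by the cycle count of a composition of many permutations, and one must verify precisely that only the fixed-point set of $\sigma_N$ can contribute beyond the free-independence pattern. This analysis extends the single-matrix moment computation of Part I, but the mixed setting requires a finer count that tracks the geometry of $\sigma_N$ through the entire word rather than only through its single-letter cycle structure; in particular, one must rule out that combinations of non-fixed points of $\sigma_N$ conspire, through several transitions, to produce non-negligible contributions that the bound $|\mathrm{Fix}(\sigma_N)|/M_N^2$ would not detect.
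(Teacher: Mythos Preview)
Your necessity argument via the second mixed cumulant is exactly what the paper does (Lemma~4.1), and your Wick-formula setup matches Section~2. The gap is in the sufficiency direction: your claim that ``a pairing can saturate the exponent of $M_N$ at that transition only when the participating index pair lies in $\mathrm{Fix}(\sigma_N)$'' is not correct as stated, and this is precisely the ``conspiracy'' you worry about at the end.

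The paper's analysis (Corollary~2.7) shows that the contributing pairings on each segment of $\pi\vee\delta$ are of exactly two types, called $\nu_1$ and $\nu_2$. When two consecutive letters differ, there are then \emph{three} distinct local configurations to kill, handled by three separate lemmas: the case $\pi(2k)=2k+1$ \emph{and} $\pi(2k-1)=2k+2$ (Lemma~2.3) is governed by the fixed-point/coincidence count you identify, but the cases where only one of these holds (Lemma~2.4(i),(ii)) are governed by the different quantities
\[
\#\{(i,j,l):\pi_2\circ\sigma_k(i,j)=\pi_1\circ\sigma_{k+1}(j,l)\}\quad\text{and}\quad
\#\{(i,j,l):\pi_1\circ\sigma_k(i,j)=\pi_2\circ\sigma_{k+1}(j,l)\},
\]
which are \emph{not} fixed-point counts of $\sigma_N$ and are not a priori controlled by $|\mathrm{Fix}(\sigma_N)|$. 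For general symmetric entry permutations these three conditions are independent, so your fixed-point bound alone would not close the argument.

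What makes the theorem true for partial transposes specifically is that the block structure forces all three conditions to hold simultaneously once the fixed-point condition does. The paper proves this separately: Theorem~3.2 shows $\mathfrak c(\Gamma,\Gamma')=\mathfrak j(\Gamma,\Gamma')$ and bounds both by $M^2/L$ with $L$ coming from the lcm of the block sizes, and Theorem~3.3 bounds the projection quantities by $M^3/D$ and $M^3/B$. In the proof of Theorem~4.8 (and Theorem~4.13 for the $\Gamma$ vs.\ $\Ltr$ case) it is then checked that the fixed-point condition forces the relevant $b,d,B,D$ parameters to diverge in the right way, which in turn yields conditions ($\mathfrak c.2$) and ($\mathfrak c.3$). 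Your outline is missing both this trichotomy of pairing types and the partial-transpose-specific argument linking the fixed-point count to the projection conditions; without them the sufficiency proof does not go through.
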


Besides the Introduction, the paper is organized into 3 more sections. Section 2 presents the notations and several technical results concerning symmetric (i.e. preserving self-adjointness) entry permutations on Wishart matrices. The techniques in this section are inspired from the results in \cite{popa-guassian} on Gaussian random matrices. Section 3 presents a combinatorial inequality satisfied by partial transposes. Section 4 presents the main results, giving necessary and sufficient conditions for the asymptotic freeness of different partial transposes  and left partial transposes of Wishart random matrices. Finally, Section 5 presents a result on the boundedness of covariance of traces of products of various partial transposes of Wishart matrices; using standard procedures from probability, it follows that the results from the previous Section hold true almost surely.

\section{Some results on symmetric entry permutations of
  Wishart Random Matrices } \label{section:vV}

For a positive integer $ N $, we will denote by $ [ N ]$ the
ordered set $ \{ 1, 2, \dots, N \} $ and by $ \mathcal{S}( [
  N ]^2)$ the set of permutations on $ \{ (i, j) : 1 \leq i,
j \leq N \} $, i.e.
\[
\mathcal{S} ([ N ]^2) = \{ \sigma : [ N ] \times [ N ]
\rightarrow [ N ] \times [ N ], \textrm{bijection} \}.
\]
If $ A $ is a $ N \times N $ square matrix and $ \sigma \in
\mathcal{S}([ N ]^2) $, we denote by $ A^\sigma $ the matrix
defined by $ \big[ A^\sigma\big]_{i, j} = \big[ A \big]_{
  \sigma(i, j)} $.  A permutation $ \sigma \in \mathcal{S}([
  N ]^2) $ will be called \emph{symmetric} if it commutes
with the transpose, i.e.
\[ 
\sigma \circ t (a, b) = t \circ \sigma (a, b) 
\]
where $ t (a, b) = (b, a) $.  Note that if $ \sigma $ is
symmetric, then $ \sigma(a, b) = (c, c) $ is equivalent
to $ a = b $.  Moreover, if $ A $ and $ \sigma $ are
symmetric, then so is $ A^\sigma $.

In this paper, we will define a \emph{Wishart} random matrix
with shape parameters $(M, P)$ as follows.  First let $ G =
\big[ g_{i, j} \big]_{(i, j)\in [ M ] \times [ P ]} $ to be
a \emph{Ginibre} rectangular $ M \times P $ random matrix,
i.e. the its entries form a family $ \big\{ g_{i, j}: i \in
[ M ], j \in [ P ] \big\} $ of independent, identically
distributed complex Gaussian random variables with mean $ 0
$ and complex variance $ \displaystyle \frac{1}{ \sqrt M } $
and then we set $W =G G^*$.

In this paper, we shall suppose that
$ \big\{ M_N \big\}_N $ 
and
$ \big\{ P_N \big\}_N $ 
are two strictly increasing sequences of positive integers such that
$ \displaystyle
\lim_{N \rightarrow \infty} \frac{P_N}{M_N} = c $
for some (fixed)
$ c > 0 $ and $W_N$ will denote the Wishart matrix with these shape
parameters.

In the next paragraphs we will introduce some notations
concerning the expression
\[
E \circ \tr \big( W^{ \sigma_{1}} \cdot W^{ \sigma_{2}}
\cdots W^{ \sigma_{m}} \big)
\]
where $ \sigma_{ k} $ is a symmetric permutation from
$\mathcal{S} ([ M ]^2) $ for each $ k = 1, 2, \dots, m $.

First, with the convention
$ i_{  m+1 } = i_1 $,
denote by 
$ \mathcal{I}(m) $ the set 
\begin{align*}
\{ \ou = (i_1, j_1, j_{-1}, i_{-1},
i_2, j_2, \dots, 
i_{m}, j_m,  j_{-m},& i_{-m})  
:\ 
i _{ \pm k} \in [ M ], 
j_{\pm k } \in [ P ] \\
\textrm{ and } & i_{-k}= i_{k +1}
\textrm{ and }
j_k = j_{-k}
\ \textrm{for}\ k \in [ m ] \}.
\end{align*}
With this notation, we have then:
\begin{align}
E \circ  \tr 
\big( 
W^{ \sigma_{1}}  & 
\cdots 
W^{ \sigma_{ m}} 
\big)
=
\sum_{\substack{t_k  \in [ M ] \\ 1\leq k \leq m }} \frac{1}{M} 
E \Big(    
[ W^{\sigma_1}]_{t_1 t_{2}}
[ W^{\sigma_2}]_{t_2 t_{3}}
\cdots [ W^{ \sigma_m}]_{t_m t_{1}}
\Big) \label{formula:1} \\
& = 
\sum_{ \ou \in \mathcal{I}(m)} 
\frac{1}{M}
E  \big( 
g_{ \pi_1 \circ \sigma_1 (i_1, i_{-1}), j_1}
\overline{
g_{ \pi_2 \circ
\sigma_1 (i_1, i_{-1}), j_1}
}
\cdots \nonumber \\
& \hspace{4cm}\cdots
g_{ \pi_1 \circ 
\sigma_m (i_m, i_{-m}), j_m}
\overline{
g_{ \pi_2 \circ 
\sigma_m (i_m, i_{-m}), j_m}
}
\big)\nonumber\\
& = 
\sum_{ \ou \in \mathcal{I}(m)} 
\frac{1}{M}
E  \big( 
g_{ l_1, j_1}
\overline{
g_{ l_{-1}, j_{-1}}
}
\cdots
g_{l_m, j_m}
\overline{ g_{l_{-m}, l_{-m} } } \big)\nonumber  
\end{align}  
where
$ \pi_1 $ and $ \pi_2 $ 
are the canonical projections
(i.e. 
$ \pi_1(i, j) = i $ 
and
$ \pi_2 (i, j) = j $) 
and
\begin{align*}
& l_k =  \pi_1 \circ \sigma_k ( i_k, i_{-k})  \\
& l_{-k} = \pi_2 \circ \sigma_k ( i_k, i_{-k}).  
\end{align*}

Equation (\ref{formula:1}) can be further refined using
Wick's formula (see \cite{jason} or \cite[\S 1.5]{mingo-speicher}). More precisely, denote by
$ \cP_2(2m, 2) $ the set of pair partitions on $ [2 m ] $ such
that $ k + \pi(k) $ is odd for each $ k $. Here we are thinking
of $\pi$ as the permutation of $[2m]$ where each block of the
partition becomes an cycle of the permutation. 
 Wick's formula
gives then
\begin{align*}
E \circ \tr \Big( 
& W^{ \sigma_{1}}
\cdot W^{ \sigma_{ 2}} 
\cdots 
W^{ \sigma_{ m}} 
\Big)
=
\sum_{\pi \in \cP_2(2m, 2)}
\frac{1}{M}
\sum_{ \ou \in \mathcal{I}(m)}
\prod_{ (2t-1, 2s) \in \pi}
E \Big( 
g_{ l_t, j_t }
\overline{   g_{ l_{-s}, j_{-s}}  }     
\Big)
\end{align*} 

Therefore, denoting 
\[
v(\pi, \overrightarrow{\sigma}, \ou )
=  \prod_{ (2t-1, 2s) \in \pi}
E \Big( 
g_{ l_t, j_t
}
\overline{   g_{ l_{-s}, j_{-s}}  }     
\Big)
\]
and 
\begin{align*}
\mathcal{V}(\pi, \overrightarrow{\sigma}) 
& =
\frac{1}{M}
\sum_{ \ou \in \mathcal{I}(m) }
v( \pi, \os, \ou ).
\end{align*}
we have  that
\begin{equation}\label{eq:wick}
E \circ \tr \big(
W^{\sigma_1} \cdot W^{\sigma_2}
\cdots W^{\sigma_m}  \big)
= \sum_{ \pi \in \cP_2(2m, 2)} 
\mathcal{V}(\pi, \os) .
\end{equation}       

Moreover, denoting
\[
\mathcal{A}(\pi, \os) 
= \big\{
\ou\in \mathcal{I}(m)  :\
v( \pi, \os, \ou)
\neq 0 
\big\},
\]  
we have that
\begin{equation}\label{v:card}
\mathcal{V}( \pi, \os) = M^{ - m -1 }  \cdot
\# (\mathcal{A}(\pi, \os)).
\end{equation}

To simplify the writing in the next lemma, we need more notation. First, let 
$ D = \{ d_1, d_2, \dots, d_r \} $
be a subset of 
$ [ 2 m ] $
such that 
$ d_1 < d_2 < \dots < d_r $.
For
$ \overrightarrow{w}
= (w_1, w_2, \dots, w_{4m}),
$
denote
\[
\overrightarrow{w}[D] = (w_{2d_1 -1}, w_{2d_1},
w_{2d_2 -1}, w_{2d_2},
\dots, w_{2d_r-1}, w_{2d_r}).
\] 

Next, for 
$ \ou
=( i_1, j_1, j_{-1}, i_{-1},
\dots, j_{-m}, i_{-m})
\in \mathcal{I}(m) $,
denote
\begin{align}
\os(\ou)  =  & (l_1, j_1, j_{-1}, l_{-1}, 
\dots, l_m, j_m, j_{-m}, l_m ) \label{sigma:u}\\
=& \big(
\pi_1 \circ \sigma_1(i_1, i_{-1}), j_1,
j_{-1}, \pi_2\circ \sigma_1 ( i_1, i_{-1}),
\dots,
j_{-m}, \pi_2 \circ \sigma_m (i_m, i_{-m})
\big)\nonumber,
\end{align}
%
and define
\[
\cA_{\pi, \os }(D) =\big\{
\os( \ou)[ D ] \mid \ 
\ou \in \cA(\pi, \vec\sigma) 
\big\}.
\]

\begin{lemma}\label{lemma:1:1}
Let $ B$ be a subset of $ [ 2m ] $ 
which is closed with respect to $ \pi \in \cP_2(2m, 2)$, 
i.e.
if $ l \in B $, then $ \pi(l) \in B $.
Suppose that
$\big\{ 2k +1, 2k + 2 \big\} \subseteq B $
and let
\begin{align*}
B_1 &
= B \cup \big\{ 2k-1, 2k
\big\}\cup \big\{ \pi(2k-1), \pi(2k)
\big\} \mathrm{\ and\ }\\
B_2 &
= B \cup
\big\{ 2k+3, 2k + 4
\big\}
\cup
\big\{
\pi(2k+3), \pi(2k + 4)
\big\}.
\end{align*}
Then, for $j =1,2$, we have that
\[
\#(\cA_{\pi, \os}(B_j))
\leq 
\#(\cA_{ \pi, \os }(B))
\cdot
(\max\{ M, P\})^{\frac{1}{2} \#\big( B_j \setminus B\big) }.
\]
\end{lemma}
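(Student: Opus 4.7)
The plan is a fiber-counting argument: for each fixed $\overline{\omega}\in\cA_{\pi,\os}(B)$, I bound the size of the fiber $\{\omega\in\cA_{\pi,\os}(B_j):\omega|_B=\overline{\omega}\}$ by $(\max\{M,P\})^{|A|/2}$, where $A=B_j\setminus B$; summing over fibers then yields the lemma. As a preliminary observation, since $B$ is $\pi$-closed and the adjoined set $\{2k-1,2k,\pi(2k-1),\pi(2k)\}$ is itself $\pi$-closed (and similarly for $B_2$), the difference $A$ is also $\pi$-closed; hence $A$ is a disjoint union of $\pi$-pairs and $|A|$ is even.

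The hypothesis $\{2k+1,2k+2\}\subseteq B$ provides exactly the structural information that will make the bound sharp: $\overline{\omega}$ records $(l_{k+1},j_{k+1},j_{-(k+1)},l_{-(k+1)})$, and because $\sigma_{k+1}$ is a bijection of $[M]^2$, one recovers $(i_{k+1},i_{-(k+1)})=\sigma_{k+1}^{-1}(l_{k+1},l_{-(k+1)})$; the defining identities of $\cI$ then give $i_{-k}=i_{k+1}$ (used for $B_1$) and $i_{k+2}=i_{-(k+1)}$ (used for $B_2$). This is what allows the new $l$-data at index $k$ (resp.\ $k+2$) to be parameterized by a single variable in $[M]$ instead of two.

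I would then work case by case for $B_1$. Since $A\subseteq\{2k-1,2k,\pi(2k-1),\pi(2k)\}$ and is $\pi$-closed, there are exactly five possibilities: (a) $A=\emptyset$; (b) $A=\{2k-1,\pi(2k-1)\}$ with $\pi(2k-1)\neq 2k$; (c) $A=\{2k,\pi(2k)\}$ with $\pi(2k)\neq 2k-1$; (d) $A=\{2k-1,2k\}$ when $\pi(2k-1)=2k$; and (e) $A=\{2k-1,2k,\pi(2k-1),\pi(2k)\}$ with the four elements all distinct. The fiber sizes are at most $1,\ M,\ M,\ P,\ MP$ respectively, each bounded by $(\max\{M,P\})^{|A|/2}$. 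In (b), the position $2k\in B$ already records $(j_{-k},l_{-k})$, so the identity $j_k=j_{-k}$ (from $\cI$) determines $j_k$ and only $l_k\in[M]$ is free; case (c) is symmetric. In (d), the Wick constraint $l_k=l_{-k}$ combined with the symmetric-permutation property of $\sigma_k$ (namely $\sigma_k(a,b)=(c,c)$ iff $a=b$) forces $i_k=i_{-k}$, so $(l_k,l_{-k})$ is determined and only $j_k=j_{-k}\in[P]$ remains free. In (e), the identity $(l_k,l_{-k})=\sigma_k(i_k,i_{-k})$ with $i_{-k}$ known parameterizes $(l_k,l_{-k})$ by $i_k\in[M]$, while $j_k=j_{-k}\in[P]$ gives the single free $j$-variable; the Wick-partner positions are then determined. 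The analysis for $B_2$ is formally identical after replacing $k$ by $k+2$.

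The most delicate step is case (d): here the symmetric-permutation property of $\sigma_k$, recorded at the start of Section~\ref{section:vV}, is indispensable---without it the Wick constraint $l_k=l_{-k}$ could be satisfied by up to $M$ distinct values of $i_k$, producing a spurious factor of $M$ that would break the bound. In case (e), the combined constraints $j_k=j_{-k}$ and $(l_k,l_{-k})=\sigma_k(i_k,i_{-k})$ (with $i_{-k}$ known) are what cut the naive post-Wick estimate $M^2P^2$ down to $MP$. Elsewhere the argument reduces to identifying the correct free variables in each case and applying the Wick constraints.
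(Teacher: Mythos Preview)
Your proposal is correct and follows essentially the same approach as the paper: a fiber-counting argument organized by cases according to which of $2k-1,\,2k$ (and their $\pi$-partners) lie outside $B$, with the key inputs being $i_{-k}=i_{k+1}$ recovered from $\sigma_{k+1}^{-1}(l_{k+1},l_{-(k+1)})$, the Wick constraints, and the symmetric-permutation property in the $\pi(2k-1)=2k$ sub-case. Your case list (a)--(e) matches the paper's case split, and your fiber bounds $1,\,M,\,M,\,P,\,MP$ agree with theirs; the remark ``replacing $k$ by $k+2$'' for $B_2$ is slightly loose shorthand, but you have already identified the correct anchoring identity $i_{k+2}=i_{-(k+1)}$, so the argument goes through.
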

\begin{proof}
We will give the details for $ j =1$; the case $ j =2 $ is similar.

Fix
$ \overrightarrow{\alpha}\in
\cA_{ \pi, \os}(B) $
and suppose that 
$ \ou \in \mathcal{A}( \pi, \os) $ 
and
$ \vec{\beta} \in
\cA_{ \pi, \os }(B_1) $
are such that
$ \os(\ou)[ B ] = \vec{\alpha} $
and 
$ \os(\ou)[B_1] = \vec{\beta} $.

We shall distinguish three cases, if none, both or only one elements of the set
$ \{ 2k-1, 2k \} $
are in 
$ B_1 \setminus B $.

If 
$ \{ 2k-1, 2k \} \subseteq B $, 
then 
$ B_1 = B $ 
and the conclusion follows trivially.

Next, suppose that 
$ \big\{ 2k-1, 2k \big\} \subseteq B_1 \setminus B $. 
Again, we distinguish two cases, depending on the equality between 
$ \pi(2k-1)$ and $ 2k $.

If 
$ \pi (2k-1) = 2k $,
then, by construction, 
$ B_1 =  \big\{ 2k-1, 2k \big\} \cup B $.

Writing
\begin{align*}
& \ou = (i_1, j_1, j_{-1}, i_{-1}, \dots, i_m, j_m, j_{-m}, i_{-m})
\mathrm{\ and\ }\\
& \os(\ou)  =  (l_1, j_1, j_{-1}, l_{-1}, 
\dots, l_m, j_m, j_{-m}, l_{-m})
\end{align*}
we have that the only components of 
$ \os(\ou) $ 
that are components of
$ \vec{\beta} $
but not of 
$ \vec{\alpha} $
are
$ l_k, j_k, j_{-k}, l_{-k} $.

Since 
$ \ou \in \mathcal{A} (\pi, \os ) $,
we have that
$ 
E \big( g_{l_k, j_k} 
\overline{g_{l_{-k}, j_{-k}}} \big) \neq 0.
$ 
Hence 
$ j_k = j_{-k} $
and 
$ l_k = l_{-k} $.
The last equality means that
\[ \pi_1 \circ \sigma_k (i_k, i_{-k})
= \pi_2 \circ \sigma_k ( i_k, i_{-k}), \] 
which, since
$ \sigma_k $
is symmetric, is equivalent to 
$ i_k = i_{-k} $. 

On the other hand, 
$ \{ 2k+1, 2k + 2 \} \in B $, 
so
$l_{k+1}$
and 
$ l_{-(k+1)} $ 
are components of 
$ \vec{\alpha}$.
But 
\[ i_{-k}= i_{k+1} = \pi_1 \circ \sigma_{k+1}^{-1}(l_{k+1}, l_{-(k+1)}),
\]
that is,
$ i_{-k} $
is uniquely determined by
$\vec{\alpha}$.

If follows that all the components of
$ \vec{\beta} $ 
are uniquely determined by
$ \vec{\alpha} $
and by
$ j_k  = j_{-k} $.
So
\[
\#( \cA_{ \pi, \os }(B_1)) \leq P \cdot
\#( \cA_{ \pi,\os }(B))
= P^{\frac{1}{2} \#(B_1 \setminus B) }
\cdot \#( \cA_{ \pi,\os  }(B)).
\]

If
$ \pi(2k-1) \neq 2k $, 
then
$ \pi (2k-1) \not = 2k $ and $\{ 2k -1 , 2k, \pi(2k -1), \pi(2k)\}$
are distinct and they are not elements of 
$ B $. 
More precisely,
\[
B_1 \setminus B =
\{ 2k-1, 2k, \pi(2k-1), \pi(2k) \}.
\] 
Since $ \pi \in \cP_2(2m, 2) $, 
it follows that 
$ \pi(2k-1) = 2t$ and $\pi(2k) = 2s - 1$ 
for some
$ t, s \in [ m ] $.
The components of 
$ \vec{\beta}$
which are not components of 
$\vec{\alpha}$ 
are in this case
$ l_k, j_k, j_{-k}, l_{-k}, l_s, j_{s}, l_{-t}, j_{-t}. $

Since
$ \ou \in \mathcal{A}( \pi, \os ) $, 
we have that
\[
E\big( g_{l_k, j_k } \overline{g_{l_{-t}, j_{-t}}} \big)
\neq 0 
\neq 
E \big( g_{l_s, j_s} \overline{g_{l_{-k}, j_{-k}} }
\big)\]
therefore 
$ l_{-t} = l_k $,
$ l_s = l_{-k} $,
$ j_{-t} = j_k $,
$ j_{s} = j_{-k} $.

On the other hand, 
$ 
(l_k, l_{-k}) = \sigma_k ( i_k, i_{-k}) $
and
\[
i_{-k} = i_{k+1} = \pi_1 \circ \sigma_{k+1}^{-1} (l_{k+1}, l_{-(k+1)} ) .\]
Since
$ l_{k+1}$ and $ l_{-(k+1)} $
are components of 
$ \vec{\alpha} $, 
we have that 
$ i_{-k} $
is uniquely determined by
$ \vec{\alpha}$.

It follows that 
$ \vec{\beta} $
is uniquely determined by
$ i_{k}, j_{k} $
and the components of
$ \vec{\alpha} $.
Therefore
\[
\#( \cA_{ \pi, \os }(B_1))
\leq
M \cdot P \cdot \#( \cA_{ \pi, \os }(B) )
\leq 
\#( \cA_{ \pi, \os }(B))
\cdot
(\max\{ M, P\})^{\frac{1}{2} \#\big( B_k \setminus B\big) }.
\]

Finally, suppose that
$ 2k \in B $
and
$ 2k-1 \notin B $.  
Then 
$ B_1 \setminus B = \{ 2k-1, \pi(2k-1)\} $
and the components of
$ \vec{\beta} $
which are not components of
$ \vec{\alpha} $
are 
$l_k, j_k, l_{-s}, j_{-s} $
where 
$ \pi(2k-1) = 2s $.

As above, 
$ \ou \in \mathcal{A}(\pi, \os) $
gives that
$ l_k = l_{-s} $ 
and
$ j_k = j_{-s} $. 
Moreover, the definition of
$\mathcal{I}(m) $
gives that 
$ j_k = j_{-k} $
which is a component of 
$\vec{\alpha} $. 
Therefore
$\vec{\beta}$
is uniquely determined by 
$ l_k $
and 
$ \vec{\alpha}$,
and the conclusion follows.

The case 
$ 2k \notin B $
and
$ 2k-1 \in B $
is similar.
\end{proof}

The results below concern Equations (\ref{eq:wick}) and (\ref{v:card})
when
$ W_N $ is a 
$ M_N \times M_N $  
Wishart random matrix, that is 
$ W_N = G_N \cdot G_N^\ast $
with
$G_N $ 
a $ M_N \times P_N $ 
Ginibre random matrix, moreover we assume that $\lim_{N \rightarrow \infty} P_N/M_N \rightarrow c$. 

An immediate consequence of  Lemma
\ref{lemma:1:1} is the following.
\begin{cor}\label{cor:1:01}
Suppose that there exists some 
$ D \subseteq [ 2 m ] $ 
such that:
\begin{enumerate}
\item[(i)]if $ l \in D $, then $ \pi(l) \in D $;
\item[(ii)] there exists some
$ k  \in [ m ] $ 
such that 
$ \{ 2k -1, 2k \} \subseteq D $;
\item[(iii)] $ \#( \cA_{\pi, \os} (D))
= o \Big(M_N^{ 1 + \frac{1}{2} \#( D) }\Big) $.
\end{enumerate} 
Then 
\[
\lim_{N \rightarrow \infty}
\mathcal{V} ( \pi, \os ) = 0 .
\]
\end{cor}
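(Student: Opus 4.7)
The plan is to iteratively apply Lemma~\ref{lemma:1:1} to enlarge $D$ to all of $[2m]$ while tracking multiplicatively the growth of $\#(\cA_{\pi,\os}(\cdot))$, and then to combine the resulting bound with~(\ref{v:card}) and the assumption $P_N/M_N \to c$. By hypothesis~(ii), $D$ contains the complete pair $\{2k-1, 2k\}$ for some $k \in [m]$. Applying Lemma~\ref{lemma:1:1} with its internal index chosen as $k-1$ (so that the required $\{2k'+1, 2k'+2\} \subseteq B$ becomes the already-satisfied $\{2k-1, 2k\} \subseteq D$), the option $B_1$ augments $D$ by $\{2k-3, 2k-2\}$ together with their $\pi$-images, while $B_2$ augments it by $\{2k+1, 2k+2\}$ together with their $\pi$-images. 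Either enlarged set is again $\pi$-closed and contains a new complete pair adjacent to an already present one.

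Iterating in this manner produces a strictly increasing chain
\[
D = D_0 \subsetneq D_1 \subsetneq \cdots \subsetneq D_r = [2m]
\]
of $\pi$-closed sets: at each step the index set $J_i = \{j \in [m] : \{2j-1, 2j\} \subseteq D_i\}$ gains at least one element, so the process terminates with $J_r = [m]$, at which point $\pi$-closedness forces $D_r = [2m]$. Each application of Lemma~\ref{lemma:1:1} gives
\[
\#(\cA_{\pi,\os}(D_{i+1})) \leq \#(\cA_{\pi,\os}(D_i)) \cdot (\max\{M_N, P_N\})^{\frac{1}{2}\#(D_{i+1} \setminus D_i)},
\]
and telescoping, using $\sum_i \#(D_{i+1}\setminus D_i) = 2m - \#(D)$, yields
\[
\#(\cA_{\pi,\os}([2m])) \leq \#(\cA_{\pi,\os}(D)) \cdot (\max\{M_N, P_N\})^{m - \#(D)/2}.
\]

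Since each $\sigma_k$ is a bijection of $[M]^2$, the map $\ou \mapsto \os(\ou)$ from~(\ref{sigma:u}) is injective, hence $\#(\cA_{\pi,\os}([2m])) = \#(\cA(\pi,\os))$. Substituting into~(\ref{v:card}) gives
\[
\mathcal{V}(\pi,\os) \leq M_N^{-m-1} \cdot \#(\cA_{\pi,\os}(D)) \cdot (\max\{M_N, P_N\})^{m - \#(D)/2},
\]
and the hypothesis $P_N/M_N \to c$ gives $\max\{M_N, P_N\} = O(M_N)$, so the right-hand side is of order $\#(\cA_{\pi,\os}(D)) \cdot M_N^{-1 - \#(D)/2}$, which tends to zero by hypothesis~(iii).

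The main technical point I expect to dwell on is the verification that the inductive enlargement really reaches $[2m]$: one must observe that whenever $D_i \subsetneq [2m]$, the index set $J_i$ is nonempty (hypothesis~(ii) for $D_0$, preserved by each step) and strictly contained in $[m]$, so the connectedness of $[m]$ under adjacency produces an index at which Lemma~\ref{lemma:1:1} can be applied productively; any extra $\pi$-images dragged in only accelerate the process but have to be bookkept carefully to obtain the sharp exponent $m - \#(D)/2$.
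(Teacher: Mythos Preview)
Your proposal is correct and follows essentially the same approach as the paper: iteratively enlarge $D$ to $[2m]$ by repeatedly applying Lemma~\ref{lemma:1:1} (the paper uses only the $B_2$ option, adding the pairs $\{2k+2p-1,2k+2p\}$ cyclically for $p=1,\dots,m-1$), telescope the resulting bounds, and conclude via~(\ref{v:card}) together with $\max\{M_N,P_N\}=O(M_N)$. Your write-up is in fact slightly more explicit than the paper's in justifying termination of the chain and the equality $\#(\cA_{\pi,\os}([2m]))=\#(\cA(\pi,\os))$.
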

\begin{proof}
Let $ D_1  = D $ and, inductively define (by convention, $ 2m + q = q $ ):
\[
D_{ p + 1 } = D_p 
\cup
\big\{
2k + 2p - 1, 2k +2p 
\big\}
\cup
\big\{
\pi(2k + 2p - 1),
\pi( 2k +2p )
\big\}.
\] 
Then 
$ D_m = [ 2m ] $ 
and 
$ \#( \cA_{\pi, \os}(D_m))
= \# (\cA( \pi, \os) )$.

According to Lemma \ref{lemma:1:1}, we have that 
\[
\#( \cA_{\pi, \os}( D_{p+1}))
\leq 
\#( \cA_{\pi, \os}( D_p))
\cdot
( \max\{ M_N, P_N\})^{ \frac{1}{2} \# ( D_{p+1} \setminus D_p ) }  
.
\]

Hence
\[
\# \mathcal{A}(\pi, \os) 
= \# A_{\pi, \os} ( D_{2m}) 
\leq 
\# A_{\pi, \os } ( D )
( \max\{ M_N, P_N\})^{ \frac{1}{2} \# ( [2 m ] \setminus D ) } 
= 
o ( M_N^{ 1 + m }),
\]
and the conclusion follows from equation (\ref{v:card}).
\end{proof}
Corollary \ref{cor:1:01} shall be used to prove the next lemma, needed for the main results of the next section.

\begin{lemma}\label{lemma:2c2}
Suppose that  $\pi \in \cP_2(2m, 2)$ and
$ \pi (2k) = 2k + 1 $
and
$ \pi(2k-1) = 2k + 2 $.


\begin{figure}[t]
\begin{center}
\begin{tikzpicture}[anchor=base, baseline]
\node[below] at (-0.8,0) {$\cdots$};
\node[below] at (0,0) {$g_{l_k\,j_k}$};
\node[below] at (2,0) {$\overline{g_{l_{-k}\,j_{-k}}}$};
\node[below] at (4,0) {$g_{l_{k+1}\,j_{k+1}}$};
\node[below] at (6.5,0) {$\overline{g_{l_{-(k+1)}\,j_{-(k+1)}}}$};
\node[below] at (8.2,0) {$\cdots$};
\node[above] at (0,0) {{\tiny $2 k -1$}};
\node[above] at (2,0) {{\tiny $2 k$}};
\node[above] at (4,0) {{\tiny $2 k+1$}};
\node[above] at (6.5,0) {{\tiny $2 k+2$}};
\draw  (0,0.5) .. controls (0.5,1.5) and (6,1.5) .. (6.5,0.5);
\draw  (2,0.5) .. controls (2.3,1) and (3.7,1) .. (4,0.5);
\draw [dotted] (-0.1,-0.55) .. controls (0.6, -2.1) and (5.8, -2.1) .. (6,-0.55);
\draw [dotted] (2.3,-0.55) .. controls (2.7, -1.6) and (3.6, -1.6) .. (4.2,-0.55);
\draw [dashed] (0.2,-0.55) .. controls (0.7, -1.3) and (1.7, -1.3) .. (2.3,-0.55);
\draw [dotted] (1.8,-0.55) .. controls (2.5, -1.5) and (3.0, -1.5) .. (3.7,-0.55);
\draw [dashed] (4.2,-0.55) .. controls (4.9, -1.3) and (5.9, -1.3) .. (6.8,-0.55);
\end{tikzpicture}
\end{center}
\caption{\label{fig:1} The pairs 
$ (2k -1, 2k + 2) $
 and  
 $ (2k, 2k + 1) $
 from $ \pi $
  are representing by solid lines.}
\end{figure}
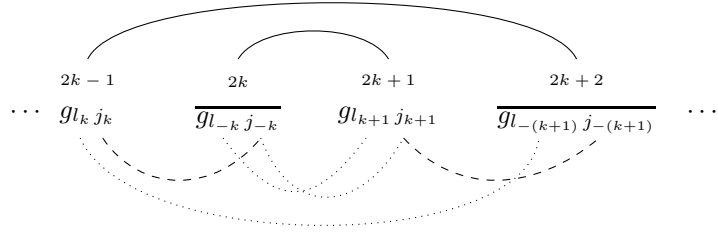

If 
$ \sigma_k $ and $ \sigma_{k+1} $
are such that
\begin{equation}
\lim_{N \rightarrow\infty}
\frac{ 
\#\big\{ (i, j, l)\in [ N ]^3:\ 
\sigma_k (i, j) = \sigma_{k+1}( i, l)
\big\}
}{M_N^2} = 0 
\tag{$\mathfrak{c}.1$}
\end{equation}
then
\[
\lim_{ N \rightarrow \infty }
\mathcal{V}(\pi, \os) = 0.
\]
\end{lemma}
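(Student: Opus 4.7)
The plan is to invoke Corollary~\ref{cor:1:01} with the minimal $\pi$-closed subset of $[2m]$ that contains $\{2k-1,2k\}$, namely
\[
D = \{2k-1,\, 2k,\, 2k+1,\, 2k+2\}.
\]
Since $\pi(2k-1) = 2k+2$ and $\pi(2k) = 2k+1$, $D$ is closed under $\pi$, and hypotheses (i) and (ii) of the corollary hold automatically. The entire work is to verify (iii), namely that $\#(\cA_{\pi,\os}(D)) = o\bigl(M_N^{1+\frac{1}{2}\#D}\bigr) = o(M_N^3)$, using ($\mathfrak{c}.1$).

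First I would write $\os(\ou)[D]$ out explicitly as
$(l_k,\, j_k,\, j_{-k},\, l_{-k},\, l_{k+1},\, j_{k+1},\, j_{-(k+1)},\, l_{-(k+1)})$.
The defining relations of $\mathcal{I}(m)$ give $j_k = j_{-k}$ and $j_{k+1}=j_{-(k+1)}$, while the two Wick pairs $(2k-1,2k+2)$ and $(2k, 2k+1)$ of $\pi$ impose $j_k=j_{-(k+1)}$, $l_k=l_{-(k+1)}$, $j_{k+1}=j_{-k}$ and $l_{k+1}=l_{-k}$. Collectively these collapse the four $j$-entries to a single value $j \in [P_N]$ and leave only two independent $l$-entries. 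Therefore every element of $\cA_{\pi,\os}(D)$ has the shape $(l_k, j, j, l_{-k}, l_{-k}, j, j, l_k)$, and their total number equals $P_N$ times the number of admissible pairs $(l_k, l_{-k})$.

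Next I would reinterpret the admissibility condition in terms of the triples appearing in ($\mathfrak{c}.1$). A pair $(l_k, l_{-k})$ is admissible iff there exist $(a,b,c) \in [M_N]^3$, representing $(i_k, i_{-k}, i_{-(k+1)})$ with $i_{k+1}=i_{-k}=b$, such that $\sigma_k(a,b) = (l_k, l_{-k})$ and $\sigma_{k+1}(b,c) = (l_{-k}, l_k)$. Since $\sigma_k$ is a bijection, this correspondence $(l_k, l_{-k}) \leftrightarrow (a,b,c)$ is injective on admissible tuples. Eliminating $(l_k, l_{-k})$, the condition becomes $\sigma_{k+1}(b,c) = t \circ \sigma_k(a,b)$, and using the symmetry relation $t \circ \sigma_k = \sigma_k \circ t$ it rewrites as $\sigma_{k+1}(b,c) = \sigma_k(b,a)$. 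Relabelling $(b,a,c)=(i,j,l)$, the count of admissible pairs equals $\#\{(i,j,l)\in[M_N]^3 : \sigma_k(i,j) = \sigma_{k+1}(i,l)\}$, which is $o(M_N^2)$ by hypothesis ($\mathfrak{c}.1$).

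Since $P_N / M_N \to c$, combining the two steps gives $\#(\cA_{\pi,\os}(D)) \le P_N \cdot o(M_N^2) = o(M_N^3)$, so condition (iii) of Corollary~\ref{cor:1:01} holds and the corollary yields $\mathcal{V}(\pi,\os) \to 0$. The delicate step is the symmetry manipulation of the previous paragraph: one must use that $\sigma_k$ commutes with the transposition $t$ in order to recognize the resulting counting problem as exactly the one controlled by ($\mathfrak{c}.1$), in which the same index $i$ appears on both sides; without invoking symmetry, the naive count involves $t\circ\sigma_k$ on one side and one does not land on the statement of the hypothesis.
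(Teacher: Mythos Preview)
Your proof is correct and follows essentially the same route as the paper: both apply Corollary~\ref{cor:1:01} with $D=\{2k-1,2k,2k+1,2k+2\}$, extract the constraints $j_k=j_{-k}=j_{k+1}=j_{-(k+1)}$ and $(l_k,l_{-k})=(l_{-(k+1)},l_{k+1})$ from the $\mathcal I(m)$ and Wick conditions, and bound $\#\cA_{\pi,\os}(D)$ by $P_N$ times the relevant triple count. The only cosmetic difference is that the paper parametrizes $\os(\ou)[D]$ directly by $(j_k,i_k,i_{k+1},i_{-(k+1)})$ and writes the constraint as $\sigma_k(i_k,i_{k+1})=\sigma_{k+1}(i_{k+1},i_{-(k+1)})$, leaving the symmetry relabeling to the reader, whereas you work through the $(l_k,l_{-k})$ pairs and make the use of $t\circ\sigma_k=\sigma_k\circ t$ explicit; your version is arguably cleaner on that point.
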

\begin{proof}
Let 
$ D= \{ 2k-1, 2k, 2k + 1, 2k + 2 \} $.
Then, for 
$ \ou \in \mathcal{A}(\pi, \os) $ 
with
\begin{align*}
& \ou = (i_1, j_1, j_{-1}, i_{-1}, \dots, i_m, j_m, j_{-m}, i_{-m})\\
& \os(\ou)  =  (l_1, j_1, j_{-1}, l_{-1}, 
\dots, l_m, j_m, j_{-m}, l_{-m})
\end{align*}
we have that
$ \os ( \ou ) [ D ] 
= ( l_k, j_k, j_{-k }, l_{-k },
l_{k+1}, j_{k + 1},
j_{-(k + 1)}, l_{-(k + 1)}) $.

The condition 
$ \ou \in \mathcal{I}(m) $ 
gives that
$ j_k = j_{-k} $,
$ j_{k+ 1}= j_{-(k+1)} $
(see the dashed lines in Figure 1.)
and that
$ i_{-k} = i_{k + 1} $.
So
$ \os (\ou)[ D ] $ 
is uniquely determined by
$( i_k, j_k,  i_{k +1}, j_{k+1}, i_{-(k+1)}) $.
On the other hand,
the condition
$ \ou \in \mathcal{A}(\pi, \os) $
gives that
\[
E\big( g_{l_k, j_k)} \cdot
\overline{ g_{l_{-(k+1)}, j_{k+1}} }
\big) 
\neq 0 
\neq 
E \big( \overline{g_{ l_{-k}, j_{k} } }
\cdot g_{l_{k+1}, j_{k + 1} }
\big),
\]
that is 
$ j_k = j_{k+1} $
and 
$(l_k, l_{-k}) = ( l_{-(k+1)}, l_{k+1} ) $ 
(see the dotted lines from Figure 1.).
Therefore
\begin{align*}
\#(\cA_{\pi, \os}(D)) \leq  &
\# \big\{ (j_k, i_k, i_{k+1}, i_{-(k+1)} )
:\
(l_k, l_{-k}) = ( l_{-(k+1)}, l_{k+1} )
\big\}\\
& = 
\big\{ (j_k, i_k, i_{k+1}, i_{-(k+1)} )
:\
\sigma_k (i_k, i_{k+1} )
=
\sigma_{k+1} 
( i_{k+1}, i_{-(k+1)} ) 
\big\}\\
& = o( M_N^3) 
= o( M_N^{1 + \frac{1}{2} \#( D )})
\end{align*}
and the conclusion follows from Corollary 
\ref{cor:1:01}.
\end{proof}
\begin{lemma}\label{lemma:1c1}
For 
$ k  \in [ m ] $
and 
$ \pi \in \cP_2(2m, 2) $,
with the convention 
$ 2m + q = q $,
we have that:
\begin{enumerate}
\item[(i)] If 
$ \pi(2k) = 2k + 1 $
but
$ \pi (2k - 1 ) \neq 2k + 2 $
and
\begin{equation}
\lim_{ N \rightarrow \infty}
\frac{\# \big\{
(i, j, l) \in [ N ]^3:\ 
\pi_2\circ \sigma_k (i, j) = 
\pi_1\circ \sigma_{k+1}(j, l) \big\}}
{M_N^3}=0
\tag{$\mathfrak{c}.2$}
\end{equation}
then $ \displaystyle \lim_{N \rightarrow \infty} \mathcal{V}( \pi, \os ) = 0 $.

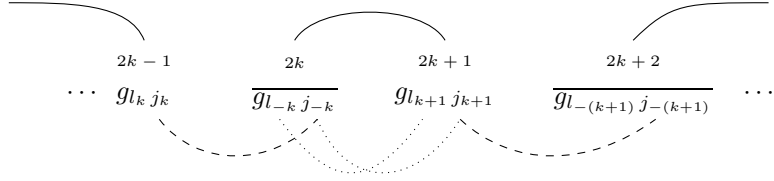
\begin{figure}[h]
	\begin{center}
		\begin{tikzpicture}[anchor=base, baseline]
		\node[below] at (-0.8,0) {$\cdots$};
		\node[below] at (0,0) {$g_{l_k\,j_k}$};
		\node[below] at (2,0) {$\overline{g_{l_{-k}\,j_{-k}}}$};
		\node[below] at (4,0) {$g_{l_{k+1}\,j_{k+1}}$};
		\node[below] at (6.5,0) {$\overline{g_{l_{-(k+1)}\,j_{-(k+1)}}}$};
		\node[below] at (8.2,0) {$\cdots$};
		\node[above] at (0,0) {{\tiny $2 k -1$}};
		\node[above] at (2,0) {{\tiny $2 k$}};
		\node[above] at (4,0) {{\tiny $2 k+1$}};
		\node[above] at (6.5,0) {{\tiny $2 k+2$}};
	     \draw  (0,0.5) .. controls (- 0.2, 1) and (-1, 1) .. (-1.8,1);
	     \draw (6.5, 0.5) .. controls ( 7, 1) and (7.2, 1) ..
	      (8.3, 1);
		\draw  (2,0.5) .. controls (2.3,1) and (3.7,1) .. (4,0.5);
		\draw [dotted] (2.3,-0.55) .. controls (2.7, -1.5) and (3.6, -1.5) .. (4.2,-0.55);
		\draw [dashed] (0.2,-0.55) .. controls (0.7, -1.2) and (1.7, -1.2) .. (2.3,-0.55);
		\draw [dotted] (1.8,-0.55) .. controls (2.5, -1.5) and (3.0, -1.5) .. (3.7,-0.55);
		\draw [dashed] (4.2,-0.55) .. controls (4.9, -1.2) and (5.9, -1.2) .. (6.8,-0.55);
		\end{tikzpicture}
	\end{center}
	\caption{\label{fig:2} The pairing $\pi$ is represented by solid lines.}
\end{figure}
\item[(ii)]  If 
$ \pi(2k-1) = 2k+2 $
but 
$ \pi(2k ) \neq 2k + 1 $
and 
\begin{equation}
\lim_{N \rightarrow \infty}
\frac{\# \big\{
(i, j, l) \in [ N ]^3:\ 
\pi_1\circ \sigma_k (i, j) = 
\pi_2\circ \sigma_{k+1}(j, l) \big\}}
{M_N^3}=0
\tag{$\mathfrak{c}.3$}
\end{equation}
then 
$ \displaystyle \lim_{N \rightarrow \infty} \mathcal{V}( \pi, \os ) = 0 $.

\begin{figure}[h]
	\begin{center}
		\begin{tikzpicture}[anchor=base, baseline]
		\node[below] at (-0.8,0) {$\cdots$};
		\node[below] at (0,0) {$g_{l_k\,j_k}$};
		\node[below] at (2,0) {$\overline{g_{l_{-k}\,j_{-k}}}$};
		\node[below] at (4,0) {$g_{l_{k+1}\,j_{k+1}}$};
		\node[below] at (6.5,0) {$\overline{g_{l_{-(k+1)}\,j_{-(k+1)}}}$};
		\node[below] at (8.2,0) {$\cdots$};
		\node[above] at (0,0) {{\tiny $2 k -1$}};
		\node[above] at (2,0) {{\tiny $2 k$}};
		\node[above] at (4,0) {{\tiny $2 k+1$}};
		\node[above] at (6.5,0) {{\tiny $2 k+2$}};
    \draw  (2,0.5) .. controls (1.5, 1.3) and (-1.5, 1.4) .. (-1.5,1.4);
	     \draw (4, 0.5) .. controls ( 4.5, 1.3) and (7.5, 1.4) ..
	      (8.3, 1.4);
		\draw  (0,0.5) .. controls (0.8,1.2) and (5.2,1.2) .. (6.5,0.5);
		\draw [dashed] (0.2,-0.55) .. controls (0.7, -1.2) and (1.7, -1.2) .. (2.3,-0.55);
		\draw [dashed] (4.2,-0.55) .. controls (4.9, -1.2) and (5.9, -1.2) .. (6.8,-0.55);
		\draw [dotted] (-0.1,-0.55) .. controls (0.6, -2.2) and (5.8, -2.2) .. (6,-0.55);
		\draw [dotted] (0.2,-0.55) .. controls (0.6, -1.8) and (5.8, -1.8) .. (6.8,-0.55);
		\end{tikzpicture}
	\end{center}
	\caption{\label{fig:3}}
\end{figure}
\end{enumerate}
\end{lemma}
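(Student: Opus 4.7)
The plan is to apply Corollary~\ref{cor:1:01} with a $\pi$-closed set $D \subseteq [2m]$ containing the block $\{2k-1, 2k, 2k+1, 2k+2\}$. For case (i), since $\pi \in \cP_2(2m,2)$ pairs indices of opposite parity and $\pi(2k) = 2k+1$, the hypothesis $\pi(2k-1) \neq 2k+2$ together with the fact that $\pi$ is an involution forces $\pi(2k-1) = 2s$ and $\pi(2k+2) = 2t-1$ for some $s, t \in [m]$ with $\{2s, 2t-1\} \cap \{2k-1, 2k, 2k+1, 2k+2\} = \emptyset$. I therefore set
\[
D = \{2k-1,\, 2k,\, 2k+1,\, 2k+2,\, 2s,\, 2t-1\},
\]
which is $\pi$-closed with $\#D = 6$. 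By Corollary~\ref{cor:1:01}, it suffices to show $\#\cA_{\pi,\os}(D) = o(M_N^{4})$.

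Unpacking the definition, the tuple $\os(\ou)[D]$ records the twelve entries
\[
(l_k,\, j_k,\, j_{-k},\, l_{-k},\, l_{k+1},\, j_{k+1},\, j_{-(k+1)},\, l_{-(k+1)},\, j_{-s},\, l_{-s},\, l_t,\, j_t).
\]
Membership in $\mathcal{I}(m)$ gives $j_k = j_{-k}$, $j_{k+1} = j_{-(k+1)}$, and $i_{-k} = i_{k+1}$; the non-vanishing of the Gaussian expectations at the three pairs of $\pi$ meeting $D$ (the solid lines in Figure~\ref{fig:2}) gives $l_{-k} = l_{k+1}$, $j_{-k} = j_{k+1}$, $l_k = l_{-s}$, $j_k = j_{-s}$, $l_{-(k+1)} = l_t$ and $j_{-(k+1)} = j_t$. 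Writing $(i, j, l) = (i_k,\, i_{-k},\, i_{-(k+1)})$, the identities $(l_k, l_{-k}) = \sigma_k(i, j)$ and $(l_{k+1}, l_{-(k+1)}) = \sigma_{k+1}(j, l)$ translate the surviving constraint $l_{-k} = l_{k+1}$ into
\[
\pi_2 \circ \sigma_k(i, j) \,=\, \pi_1 \circ \sigma_{k+1}(j, l).
\]
Hence $\os(\ou)[D]$ is completely determined by the quadruple $(i, j, l, j_k)$ with $j_k \in [P_N]$ free. Hypothesis $(\mathfrak{c}.2)$ bounds the number of admissible triples $(i, j, l)$ by $o(M_N^3)$, and since $P_N = O(M_N)$, one obtains $\#\cA_{\pi, \os}(D) = o(M_N^{4}) = o(M_N^{1 + \#D/2})$, so Corollary~\ref{cor:1:01} yields $\lim_N \mathcal{V}(\pi, \os) = 0$.

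Case (ii) follows by the same argument with the roles of $\{2k-1, 2k+2\}$ and $\{2k, 2k+1\}$ swapped: now $\pi(2k-1) = 2k+2$ lies in $\pi$, $\pi(2k)$ and $\pi(2k+1)$ each exit the block, and the surviving constraint becomes $l_k = l_{-(k+1)}$, which translates via the same substitution to $\pi_1 \circ \sigma_k(i, j) = \pi_2 \circ \sigma_{k+1}(j, l)$ --- precisely hypothesis $(\mathfrak{c}.3)$. The only real delicacy, equally present in both cases, is the bookkeeping that verifies $D$ has exactly six distinct elements and that all twelve positional entries of $\os(\ou)[D]$ collapse to the quadruple $(i, j, l, j_k)$; once this is in place, the counting argument and the invocation of Corollary~\ref{cor:1:01} are immediate.
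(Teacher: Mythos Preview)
Your proof is correct and follows essentially the same approach as the paper: you take the $\pi$-closed set $D = \{2k-1,2k,2k+1,2k+2\}\cup\{\pi(2k-1),\pi(2k+2)\}$ of size $6$, reduce $\os(\ou)[D]$ to a quadruple $(i_k,i_{k+1},i_{k+2},j_k)$ constrained by $\pi_2\circ\sigma_k(i_k,i_{k+1})=\pi_1\circ\sigma_{k+1}(i_{k+1},i_{k+2})$, and invoke Corollary~\ref{cor:1:01}. The only cosmetic difference is that the paper parametrizes by $(i_k,i_{k+1},i_{k+2})$ while you write $(i_k,i_{-k},i_{-(k+1)})$, and the paper compresses the determination of $\os(\ou)[D]$ by the eight core entries rather than listing all twelve; the substance is identical.
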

\begin{proof}
For part (i), define
\[  D = \{  2k -1, 2k, 2k + 1, 2k + 2 \} \cup \{ \pi( 2k -1), \pi(2k +2 ) \}.
\]
Then 
$ \# D = 6 $ 
and $ \pi(l) \in D $ 
whenever 
$ l \in D $. 
Therefore, according to Corollary \ref{cor:1:01}, it suffices to show that
$ \# A_{\pi, \os} (D) = o(M_N^4) $.

But, for 
$ \ou \in \mathcal{A}(\pi, \os) $,
with the notations from above,
$ \os(\ou) [ D ] $ 
is uniquely determined by
$ (l_k, j_{k}, j_{-k}, l_{-k},
l_{k+1}, j_{k+1}, j_{-(k+1)}, l_{-(k+1)} ) $.
On the other hand, since 
$ \ou \in \mathcal{I}(m) $, 
we have 
$ j_p = j_{-p} $
for each
$ p \in [ m ] $ 
(see the dashed lines in Figure 2.)
and 
$ (l_k, l_{-k}, l_{k+1}, l_{-(k+1)}) $
is uniquely determined by
$(i_k, i_{k+1}, i_{k+2}) $.
So  
$ \ou \in \mathcal{A}(\pi, \os) $
gives that (see the dotted lines in Figure 2.):
\begin{align*}
\# A_{\pi, \os} (D) 
& \leq 
\# \big\{
( j_k, j_{k+1}, i_k,  i_{k+1}, i_{k+2} ) :\ 
E \big(  \overline{g_{l_{-k}, j_k } } \cdot g_{l_{k+1}, j_{k+1}} \big) \neq 0  \big\}\\
& = 
\big\{
( j_k, j_{k+1}, i_k,  i_{k+1},i_{k+2})   
:\ 
( l_{-k}, j_k  )
= (l_{k+1}, j_{k+1})  \big\} \\
& = 
\big\{
( j_k, i_k,  i_{k+1}, i_{k+2} ) :\ 
\pi_2 \circ \sigma_k ( i_k, i_{k+1}) 
= \pi_1 \circ \sigma_{k+1}
(i_{k+1}, i_{k+2})  \big\}\\
& = P_N \cdot o(M_N^3) = 
o ( M_N^4).
\end{align*}
The proof for part (ii) is similar (see Figure 3, with the same conventions as above).
\end{proof}

\begin{lemma}\label{lemma:5}
Suppose that $\pi \in \cP_2(2m, 2)$, 
$ k< p  < t $, 
and that 
$ a, b, c, d $ 
are elements of
$ [ 2m ] $
such that
$ \pi(a) = c $,  
$ \pi(b) = d $ 
and
$ a \in \{ 2k -1, 2k \} $,
$ b \in \{ 2p -1, 2p \} $,
$ c \in \{ 2t-1, 2t \} $,
and
$ d > 2t $ or $ d < 2 k-1 $.
Then
\[
\lim_{ N \rightarrow \infty}
\mathcal{V}(\pi, \os) = 0. 
\]
\end{lemma}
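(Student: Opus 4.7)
The plan is to apply Corollary \ref{cor:1:01} to a carefully chosen $\pi$-closed subset $D \subseteq [2m]$ containing $\{2k-1, 2k\}$, engineered so that the crossing between the pairs $\{a, c\}$ and $\{b, d\}$ can be exploited to satisfy hypothesis (iii). The natural candidate is
\[
D := \text{the $\pi$-closure of } \{2k-1, 2k, 2p-1, 2p, 2t-1, 2t\} \cup \{d\}.
\]
Since $\pi(a) = c$ with $a \in \{2k-1, 2k\}$ and $c \in \{2t-1, 2t\}$, the first three blocks already appear; since $d > 2t$ or $d < 2k-1$, the closure must extend into a fourth block $\{2q-1, 2q\}$ containing $d$. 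In all cases $D$ is $\pi$-closed, contains $\{2k-1, 2k\}$, and has cardinality in $\{6, 7, 8, 9, 10\}$ depending on how the $\pi$-siblings close up (Cases A or B for each of the two pairs).

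The main step is the size estimate $\#\cA_{\pi, \os}(D) = o(M_N^{1 + \#D/2})$. For $\ou \in \cA(\pi, \os)$, I would write out $\os(\ou)[D]$ in terms of the $l$'s and $j$'s at the four blocks touched by $D$, and then impose the equalities $j_r = j_{-r}$ coming from $\ou \in \mathcal{I}(m)$ together with $l_r = l_{-s}$, $j_r = j_{-s}$ coming from every $\pi$-pair $(2r-1, 2s) \in \pi$ with both endpoints in $D$, to reduce the $2\#D$ tuple entries to a short list of independent slots. The crossing enters as follows: the pair $\{a, c\}$ ties $\sigma_k(i_k, i_{k+1})$ to $\sigma_t(i_t, i_{t+1})$ while $\{b, d\}$ ties $\sigma_p(i_p, i_{p+1})$ to $\sigma_q(i_q, i_{q+1})$, and composing these identities around the ``loop'' $k \to p \to t \to q \to k$ (which is closed thanks to the cyclic relations $i_{-r} = i_{r+1}$, together with the fact that the chord $\{b,d\}$ crosses the chord $\{a,c\}$) produces an extra, nontrivial equation among the $(i_r, i_{r+1})$'s that is absent in a non-crossing nested configuration. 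This extra equation saves one factor of $M_N$ compared to the elementary counting, yielding the required $o(M_N^{1+\#D/2})$-bound.

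The main obstacle is the parity and closure case analysis. The parity of $a$ forces that of $c$ (since $a + c$ must be odd for $\pi \in \cP_2(2m,2)$), giving two choices for $(a,c)$; similarly two for $(b,d)$, and for each of those two pairs independently there is a Case A (siblings of $a, c$ pair with one another, and likewise for $b, d$) versus a Case B (they pair with outside elements, enlarging the $\pi$-closure). In each subcase one has to track which of $l_r$ or $l_{-r}$, and which of $j_r$ or $j_{-r}$, is identified on each side of every pair equation, and then verify that the crossing loop provides the same $M_N^{-1}$ saving. Once this is established in all subcases, condition (iii) of Corollary \ref{cor:1:01} is satisfied and the conclusion $\lim_{N \to \infty} \mathcal{V}(\pi, \os) = 0$ follows.
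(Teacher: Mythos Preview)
Your proposal has a genuine gap: the ``crossing loop'' does not close, because the cyclic relations $i_{-r}=i_{r+1}$ only link \emph{adjacent} $\delta$-blocks, while your $D$ touches the (in general non-adjacent) blocks $k$, $p$, $t$, and $q$.

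Take the cleanest subcase you mention: $a=2k-1$, $c=2t$, $b=2p-1$, $d=2q$, with $\pi(2k)=2t-1$ and $\pi(2p)=2q-1$ (your Case~A on both sides). Then $D=\{2k-1,2k,2p-1,2p,2t-1,2t,2q-1,2q\}$, $\#D=8$, and you need $\#\cA_{\pi,\os}(D)=o(M_N^{5})$. The $\pi$-pairs in $D$ force $(l_k,l_{-k})=(l_{-t},l_t)$ and $(l_p,l_{-p})=(l_{-q},l_q)$, together with $j_k=j_t$ and $j_p=j_q$; combined with $j_r=j_{-r}$ this leaves $\os(\ou)[D]$ determined by $(l_k,l_{-k},l_p,l_{-p},j_k,j_p)$, which is $O(M_N^{4}P_N^{2})=O(M_N^{6})$. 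The relation $i_{-k}=i_{k+1}$ fixes $i_{k+1}$ in terms of $(l_k,l_{-k})$, but nothing visible at $D$ ties $i_{k+1}$ to $i_p$ when $p>k+1$; likewise for the other steps of your loop. So the ``extra equation'' you invoke simply is not there, and your count is one power of $M_N$ short.

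The paper's proof fixes exactly this by filling in all the intermediate $\delta$-blocks. It grows a $\pi$-closed set forward through blocks $k,k+1,\dots,p-1$ (the $D_s$), then jumps to block $t$ and grows backward through $t,t-1,\dots,p+1$ (the $F_s$), each step controlled by Lemma~\ref{lemma:1:1}. Only at the very end is block $p$ adjoined. The point is that by then \emph{both} neighbours $p-1$ and $p+1$ are already present, so $i_p=i_{-(p-1)}$ and $i_{-p}=i_{p+1}$ are already determined by the set built so far; hence $(l_p,l_{-p})=\sigma_p(i_p,i_{-p})$ costs nothing, and only $j_p$ is new. That is where the $M_N^{-1}$ saving comes from, and it genuinely requires having the full run of blocks from $k$ to $t$ in the set, not just the four endpoints.
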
   
\begin{proof}
Let
$
\ou = (i_1, j_1, j_{-1}, i_{-1}, \dots, i_m, j_m, j_{-m}, i_{-m}) 
$ 
be an element  from
$
\mathcal{A}(\pi, \os)  
$ 
and
$ \os (\ou) = ( l_1, j_1, j_{-1}, l_{-1}, 
\dots, j_{-m}, l_{-m}).
$ 

Put 
$ D_1 
= \{ 2k -1, 2k \} \cup \{ \pi(2k-1), \pi(2k)\} $.
Then 
$ \# D_1 = 4 $,
since  
$ a $ 
is an element of 
$ \{ 2k-1, 2k \} $
and
$ \pi(a) = c > 2k $.
Moreover, by construction, 
$ \os(\ou)[ D_1] $
is uniquely determined by 
$ (l_k, j_{k}, j_{-k}, l_{-k})$.
But, since 
$ \ou \in \mathcal{I}(m) $,
we also have that
$ j_k = j_{-k} $.
So
\begin{equation}\label{eq:cor:2:1}
\# A_{\pi, \os}(D_1) 
\leq 
\# \big\{
(l_k, j_k, l_{-k}):\
l_{\pm k } \in [ M_N ], 
j_k \in [ P_N ] \big\} 
= M_N^2 \cdot P_N 
\end{equation}
For
$ 1 \leq s \leq p -k -1 $, 
define
\[
D_{s+1} = D_s \cup 
\{ 2(k+s)-1, 2(k+s) \} 
\cup 
\{ \pi(2(k+s) - 1), \pi( 2(k + s ) ) \}.
\]
Note that
$ \{ 2k -1, 2k, \dots, 2p -3, 2p -2  \} \subseteq D_{p-k} $
but
$\{ 2p -1, 2p \} \nsubseteq D_{p-k}$,
since 
$ b \in \{ 2p -1, 2p \} $ 
and
$ \pi(b) \notin D_{p-k} $.

Applying again Lemma \ref{lemma:1:1}, we obtain
\begin{align}\label{eq:5:1}
\#( \cA_{\pi, \os} ( D_{p-k} ) )
\leq &
\# ( \cA_{\pi, \os} (D_1) )
\cdot ( \max\{ M_N, P_N \})^{\frac{1}{2} \# ( D_{p-k} \setminus D_1)}\\
& =
O(M_N^{1 + \frac{1}{2}\#(D_{p-k})} ).
\nonumber
\end{align}

Next, put
\[
F_1 = D_{p-k} \cup \{2t -1, 2t \} 
\cup \{  \pi ( 2 t - 1), \pi( 2t)\}
\]
and remark that
\begin{equation}
\label{eq:5.2}
\#( \cA_{\pi, \os} (F_1) )
= O\Big(M_N^{ 1 + \frac{1}{2} \# (F_1)} \Big).
\end{equation}
To justify (\ref{eq:5.2}), note that if
$ \{ 2t-1, 2t \} \subset D_{p-k} $, 
then
$ F_1 = D_{p-k} $
and Equation (\ref{eq:5.2}) is just Equation (\ref{eq:5:1}).
Suppose that
$
\{ 2t -1, 2t \} \nsubseteq  D_{p-k}. 
$
Since
$ c \in \{ 2t -1, 2t \} $
and 
$ c = \pi(a) \in D_{p-k} $,
we have that
$ \# F_1 = \# D_{p-k} + 2 $, 
so it suffices to show that
\begin{equation}\label{eq:5.3}
\#( \cA_{\pi, \os}(F_1)) \leq 
M_N
\cdot \#(\cA_{\pi, \os}(D_{p-k})).
\end{equation}
But
$ \os(\ou)[ F_1 ] $ 
is uniquely determined by
$ \ou(\ou)[ D_{p-k}] $
and by 
$(l_t, j_t, j_{-t}, l_{-t}) $. 
Since 
$ \ou \in \mathcal{I}(m) $,
we have that 
$ j_t = j_{-t} $.
If 
$ c = 2t -1 $, 
then 
$ l_t $ and $ j_t $ 
are components of 
$ \os(\ou)[ D_{p-k}] $
so 
$ \os(\ou)[ F_1] $
is uniquely determined by $ l_{-k} $ and
$ \os(\ou)[ D_{p-k}] $.
Similarly, if
$ c = 2t $, 
then 
$ \os(\ou)[ F_1] $
is uniquely determined by $ l_{k} $ and
$ \os(\ou)[ D_{p-k}] $. 
So (\ref{eq:5.3}) is proved.

Finally, for
$ 1 \leq s \leq t-p-1 $,
put
\[
F_s+1= F_s 
\cup \{ 2(t-s) -1, 2(t-s)\}
\cup
\{ \pi( 2(t -s ) -1 ), \pi(2 (t -s ) ) \}.
\]   
Lemma \ref{lemma:1:1} and Equation
(\ref{eq:5.2}) give that
\begin{align}\label{eq:5:3}
\#( \cA_{\pi, \os} ( F_{ t - p }))
\leq &
\#( \cA_{\pi, \os} (D_{p-s}) )
\cdot
(\max\{ M_N, P_N\})^{\frac{1}{2} \# (F_{t-p} \setminus D_{p-k})} \\
& = 
O \Big( M_N ^{1 + \frac{1}{2}\# (F_{t-p} )}\Big).
\nonumber
\end{align}
Let 
$ B = F_{ t - p } \cup \{ 2p -1, 2p \} \cup \{ \pi(2p-1), \pi(2p) \} $.
It suffices to show that
\[
\#( \cA_{\pi, \os}(B) ) =
O(M_N^{\frac{1}{2}\# B} )
\]
and the conclusion follows from Corollary 
\ref{cor:1:01}.


\begin{figure}[!htb]

\begin{center}
\begin{tikzpicture}[anchor=base, baseline]

\node[above] at (-3.3,0) {$\cdots$};
\node[above] at (-2.3,0) {{\tiny $2 k -2$}};
\node[above] at (-1.3,0) {{\tiny $2 k -1$}};
\node[above] at (-0.3,0) {{\tiny $2 k $}};
\node[above] at (.6,0) {{\tiny $2 k + 1$}};

\node[above] at (1.5,0) {$\cdots$};

\draw[dashed] (2, 0.5) -- (2, - 1);

\draw (2.2, .4) rectangle (3, 0.036);

\node[above] at (2.6, 0) {{\tiny $2p-1$}};

\node[above] at (3.4, 0) {{\tiny $2p$}};

\draw (3.2, .4) rectangle (3.6, 0.036);

\draw[dashed] (4, 0.5) -- (4, - 1);

\node[above] at (4.5,0) {$\cdots$};

\draw[dashed] (7, 0.5) -- (7, - .5);

\node[above] at (5.5,0) {{\tiny $2 t-1$}};
\node[above] at (6.5,0) {{\tiny $2 t$}};

\node[above] at (7.5,0) {$\cdots$};

\draw (-0.3,0.5) .. controls ( 0.1,1.2 ) and (5.1 , 1.2)  .. (5.5, 0.5);

\draw (3.4, 0.4) .. controls ( 4,1.1 )  and (6, 1.1) .. (7.4, 1);

\draw[dashed]  (-0.1, 0.3) -- (-0.1, -0.25);

\draw[dashed]  (-1.7, 0.3) -- (-1.7, -0.25);

\draw[<->]    (-1.7, -0.25) -- (-0.1, -0.25);

\node[above] at (-0.9,-0.7) {{\tiny $D_1$}};

\draw[->]    (-0.6, -0.84) -- (1.6, -0.84);

\node[above] at (0.5,- 1.3) {{\tiny $D_j$}};

\draw[<-]    (4.5, -0.4) -- (6, -0.4);

\node[above] at (5.1,-0.85) {{\tiny $F_l$}};





\end{tikzpicture}
\end{center}
\end{figure}
Since 
$ b \in \{ 2p -1, 2p \} $
and 
$ \pi(b) = d  \notin F_{t-p} $
we have that
$ \{ 2p-1, 2p \} \nsubseteq F_{t-p} $.
Also, note that, by construction, 
$\{ 2(p-1) -1, 2(p-1)\}  $
and
$ \{ 2(p+1)-1, 2(p+1) \} $
are subsets of 
$ F_{t-p} $.
Hence 
$ (l_{p-1}, l_{-(p-1)}) = \sigma_{p-1}(i_{p-1}, i_{-(p-1)})  $
and 
$(l_{p+1}, l_{-(p+1)})= \sigma_{p+1}( i_{p+1}, i_{-(p+1)}) $
are uniquely determined by
$\os(\ou)[ F_{t-p}] $.
Since
$ i_p = i_{-(p-1)} $ 
and
$   i_{-p}= i_{p+1} $
it follows that
$ (l_p, l_{-p})= \sigma_p( i_p, i_{-p}) $
is uniquely determined by
$\os(\ou)[ D_{p-k}] $.
Therefore
$ \os(\ou)[ B ] $ 
is uniquely determined by 
$\os(\ou)[ F_{t-p}] $
and by
$ j_k $, hence
\begin{equation}\label{eq:5:4}
\#( \cA_{\pi, \os} (B)) \leq
P_N \cdot \#( \cA_{\pi, \os} (F_{t-p}))
\end{equation}

Note also that $ \pi(b) = d \notin \{2p-1, 2p\}$, hence
if
$ \{ 2p -1, 2p \} \cap F_{t -p } = \emptyset $,
then
$ \# ( B \setminus F_{t-p}) = 4 $ 
and Equations (\ref{eq:5:3}) and (\ref{eq:5:4}) give that
\[
\#( \cA_{\pi} (B)) = O\big( N ^{2 + \frac{1}{2}\#( F_{t-p})}) 
= O(N^{ \frac{1}{2}\# (B) } \big).
\]
Suppose that 
$ \{ 2p-1, 2p \} \cup F_{t-p}
\neq \emptyset $.
Since
$ b \in \{2p-1, 2p \} $ 
but 
$ b, \pi(b) \notin F_{t-p}$,
it follows that 
$ \#( B) = \# (F_{t-p}) + 2 $.
If 
$ b = 2p-1 $, 
then 
$ 2p \in F_{t-p} $, 
therefore
$ j_{-k}= j_k $ equals a component of 
$ \os(\ou)[F_{t-p}]$. The case $ b = 2p $ is similar. So, in these two cases, we have that
\[ 
\#( \cA_{\pi, \os}(B) )
=  \#( \cA_{\pi, \os}(F_{t-p})) = 
O\big(M_N^{\frac{1}{2}\#(B)}\big),
\]  
and  Corollary 
\ref{cor:1:01} gives the conclusion.
\end{proof}
An immediate consequence of Lemma \ref{lemma:5} above is the following.
\begin{cor}\label{cor:noncrossing}
Let $\pi \in \cP_2(2m,2)$ and let 
$ \delta $ 
be the pair partition  given by 
$ \delta (2k -1) = 2k $. 
If
$ \pi \vee \delta $ 
is crossing, then
\[
\lim_{ N \rightarrow \infty }
\mathcal{V}(\pi, \os) = 0.
\]
\end{cor}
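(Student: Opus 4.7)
The plan is to reduce to Lemma~\ref{lemma:5} by exhibiting two pairs of $\pi$ that realize the configuration required by that lemma; the conclusion $\lim_{N\to\infty}\mathcal{V}(\pi,\vec\sigma)=0$ then follows at once.

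First, translate the hypothesis into graph theory. Let $H$ be the graph on vertex set $[m]$ whose edges are the unordered pairs $\{k,t\}$ such that some pair of $\pi$ has endpoints in two distinct groups $G_k,G_t$ (where $G_j=\{2j-1,2j\}$). After collapsing each $G_k$ to $k$, the blocks of $\pi\vee\delta$ coincide with the connected components of $H$ (together with isolated singletons coming from groups containing a full $\pi$-pair). Saying that $\pi\vee\delta$ is crossing is therefore saying that two distinct components $C_1,C_2$ of $H$ interleave on $[m]$: there exist $i_1<i_2<i_3<i_4$ with $i_1,i_3\in C_1$ and $i_2,i_4\in C_2$.

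Second, invoke the standard combinatorial fact that two interleaving components of a graph on $[m]$ must contain edges that cross as chords on $[m]$. A short argument selects, on a $C_1$-path from $i_1$ to $i_3$, the edge $e_1=\{k_1,t_1\}$ straddling $i_2$ with $t_1$ minimal; minimality forces $t_1\leq i_3$, because some edge of the path must jump over $i_2\in C_2$ and any such edge has right endpoint at most the minimum $C_1$-element exceeding $i_2$. The $C_2$-path from $i_2$ to $i_4$ then starts inside $(k_1,t_1)$, ends at $i_4>i_3\geq t_1$ (hence outside $[k_1,t_1]$) and avoids the $C_1$-vertices $k_1,t_1$, so by intermediate-value reasoning it contains an edge $e_2=\{k_2,t_2\}\in C_2$ with $k_2\in(k_1,t_1)$ and $t_2\notin[k_1,t_1]$. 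Up to relabeling we may assume $k_1<k_2<t_1<t_2$; the symmetric pattern $t_2<k_1<k_2<t_1$ is handled identically and corresponds to the alternative clause $d<2k-1$ in Lemma~\ref{lemma:5}.

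Finally, translate back to $\pi$. Each edge of $H$ is witnessed by a concrete $\pi$-pair, so there are $\{a_1,c_1\},\{a_2,c_2\}\in\pi$ with $a_i\in G_{k_i}$ and $c_i\in G_{t_i}$. Since the groups $G_j$ are adjacent disjoint pairs in $[2m]$, the chain $k_1<k_2<t_1<t_2$ places the four endpoints in four distinct groups. Setting $(k,p,t):=(k_1,k_2,t_1)$, $(a,c):=(a_1,c_1)$, and $(b,d):=(a_2,c_2)$, one checks $k<p<t$, $a\in\{2k-1,2k\}$, $c\in\{2t-1,2t\}$, $b\in\{2p-1,2p\}$, and $d=c_2\geq 2t_2-1\geq 2t_1+1>2t$. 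Lemma~\ref{lemma:5} applies and gives the conclusion. The main (mild) obstacle is the combinatorial step, namely showing that the minimum $t_1$ indeed satisfies $t_1\leq i_3$ and that $e_2$ must exit $[k_1,t_1]$; both reduce to straightforward bookkeeping with the linear ordering on $[m]$.
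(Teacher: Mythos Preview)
Your reduction to Lemma~\ref{lemma:5} via the graph $H$ on $[m]$ is exactly what the paper has in mind (it records the corollary as an immediate consequence of Lemma~\ref{lemma:5} with no further detail), and your first and last paragraphs are correct: one must show that if the connected components of $H$ form a crossing partition of $[m]$, then $H$ contains two crossing edges.

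The gap is in your second paragraph: the claim ``minimality forces $t_1\le i_3$'' is false, and the justification offered (``any such edge has right endpoint at most the minimum $C_1$-element exceeding $i_2$'') is simply wrong. Take $m=10$, let $C_1=\{1,5,8,10\}$ be the $4$-cycle with edges $\{1,10\},\{10,5\},\{5,8\},\{8,1\}$, and $C_2=\{3,7\}$ with a double edge (this $H$ is realised by a concrete $\pi\in\cP_2(20,2)$). With $(i_1,i_2,i_3,i_4)=(1,3,5,7)$, the two $C_1$-paths from $1$ to $5$ go through $10$ and through $8$ respectively; on each, the unique edge straddling $i_2=3$ has right endpoint $10$ or $8$, both strictly larger than $i_3=5$, and since $i_4=7$ lies inside $(k_1,t_1)$ in either case, your construction of $e_2$ collapses. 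A correct argument runs as follows: assume $H$ has no crossing edges and observe that for any edge $\{u,v\}$ (with $u<v$) in a component $C$, every other component lies entirely in $(u,v)$ or entirely outside $[u,v]$ (a path in another component joining a point inside to one outside would contain an edge crossing $\{u,v\}$). Now if $C_1,C_2$ cross with witness $i_1<i_2<i_3<i_4$, a $C_1$-edge $\{u,v\}$ straddling $i_2$ forces $C_2\subset(u,v)$; a $C_2$-edge $\{u',v'\}$ straddling $i_3$ then forces $C_1\subset(u',v')$, giving $u'<u$, while $u'\in C_2\subset(u,v)$ gives $u<u'$, a contradiction. This supplies the missing combinatorial step, after which your final paragraph and Lemma~\ref{lemma:5} finish the proof.
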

%

Let $ \tau $ be a partition of $ [ n ] $. A block $ B $ of $ \tau $ will be called a \emph{segment} if its elements are consecutive numbers. If $ \tau $ is non-crossing, it is shown in \cite[Remark 9.2 (2)]{nica-speicher} that it contains at least one segment. 
\begin{cor}\label{cor:segment:0}
Suppose that 
$ \pi \in \cP_2(m, 2) $
is such that
$ \displaystyle
\lim_{N \rightarrow \infty}
\mathcal{V}(\pi, \os) \neq 0.
$
Suppose also that
$ S  = ( 2k-1, 2k, \dots, 2p-1, 2p ) $ 
is a segment of 
$ \pi \vee \delta $. 
Then  the restriction of 
$ \pi $ 
to 
$ S $ is one of the following:
\begin{itemize}
\item[(i)] $ \pi( 2k - 1) = 2p $ 
and 
$ \pi(2t)= 2t+1 $ 
for each 
$ t  = k, k+1, \dots, p - 1 $.

\begin{figure}[h!]

\begin{center}
\begin{tikzpicture}[anchor=base, baseline]

\node[above] at (-2,0) {{\tiny $2 k -1$}};
\node[above] at (-1,0) {{\tiny $2 k $}};
\node[above] at (0,0) {{\tiny $2 k + 1$}};
\node[above] at (1,0) {{\tiny $2 k +2$}};
\node[above] at (2,0) {{\tiny $2 k + 3$}};
\node[above] at (3,0) {$ \cdots $ };
\node[above] at (3.5,0) {$ \cdots $ };
\node[above] at (4.5,0) {{\tiny $2 p-2$}};
\node[above] at (5.5,0) {{\tiny $2 p-1$}};
\node[above] at (6.5,0) {{\tiny $2 p$}};

\draw  (-2,0.5) .. controls (-1.7,1.6) and (6.2,1.6) .. (6.5,0.5);

\draw  (-1,0.5) .. controls (-0.75,1) and (-0.25,1) .. (0,0.5);

\draw  (1,0.5) .. controls (1.25,1) and (1.75,1) .. (2,0.5);

\draw  (4.5,0.5) .. controls (4.75,1) and (5.25,1) .. (5.5,0.5);


\end{tikzpicture}

\end{center}
\end{figure}


\item[(ii)]
$ \pi(2k) = 2 p - 1 $ 
and $ \pi(2t-1) = 2( t + 1 ) $
for each
$ t = k, k + 1 , \dots, p-1 $.

\begin{figure}[h!]
\begin{center}
\begin{tikzpicture}[anchor=base, baseline]

\node[above] at (-2,0) {{\tiny $2 k -1$}};
\node[above] at (-1,0) {{\tiny $2 k $}};
\node[above] at (0,0) {{\tiny $2 k + 1$}};
\node[above] at (1,0) {{\tiny $2 k +2$}};
\node[above] at (2,0) {{\tiny $2 k + 3$}};
\node[above] at (3,0) {{\tiny $2 k + 4$}};

\node[above] at (4,0) {$ \cdots $ };
\node[above] at (4.5,0) {$ \cdots $ };
\node[above] at (5.5,0) {{\tiny $2 p-3$}};
\node[above] at (6.5,0) {{\tiny $2 p-2$}};
\node[above] at (7.5,0) {{\tiny $2 p-1$}};
\node[above] at (8.5,0) {{\tiny $2 p$}};

\draw  (-2,0.5) .. controls (-1.7,1.2) and (0.7,1.2) .. (1,0.5);
\draw  (-1,0.5) .. controls (-0.7,1.7) and (7.2,1.7) .. (7.5,0.5);
\draw  (0,0.5) .. controls (0.3,1.2) and (2.7,1.2) .. (3,0.5);

\draw  (2,0.5) .. controls (2.3,1.2)  and (3, 1.2) .. (3.3,1.1);


\draw  (5.5,0.5) .. controls (5.8,1.2) and (8.2,1.2) .. (8.5,0.5);

\draw  (6.5,0.5) .. controls (6.2,1.1)  and (5.5, 1.2) .. (5,1.1);

\end{tikzpicture}
\end{center}
\caption{\label{fig:6}}
\end{figure}

%
\end{itemize}
\end{cor}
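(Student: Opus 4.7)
My plan is to reformulate the claim as a statement about a \emph{super-multi-graph} $G$ on the vertex set $V=\{k,k+1,\dots,p\}$, where each super-position $q\in V$ corresponds to the pair $\{2q-1,2q\}$ and every $\pi$-pair within $S$ contributes an edge of $G$ joining the super-positions of its two endpoints (self-loops allowed a priori). Under the hypothesis $\lim_{N} \mathcal{V}(\pi,\os)\neq 0$, I will argue that $G$ is forced to be the unique non-crossing Hamilton cycle on the linearly ordered set $V$, and that each of the two ways to ``orient'' that cycle via the odd-even parity constraint on $\pi$-pairs reproduces precisely one of patterns (i) and (ii).

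First I would establish three structural properties of $G$. \emph{Non-crossing}: a properly crossing pair of super-arcs $(k_L,t_L)$ and $(p_L,q_L)$ with $k_L<p_L<t_L<q_L$ is exactly the configuration of Lemma~\ref{lemma:5} (take $a$ at super-position $k_L$, $c$ at $t_L$, $b$ at $p_L$, $d$ at $q_L>t_L$), which would force $\mathcal{V}(\pi,\os)\to 0$; the symmetric case $q_L<k_L<p_L<t_L$ also fits. \emph{No self-loops when $p>k$}: a $\pi$-pair $(2q-1,2q)$ would turn $\{2q-1,2q\}$ into a stand-alone block of $\pi\vee\delta$, contradicting the fact that it is a proper subset of the single block $S$. \emph{Connectedness and $2$-regularity}: $S$ being one block of $\pi\vee\delta$ makes $G$ connected on $V$, while each super-position contributes two elements, each lying in a unique $\pi$-pair, so every vertex of $G$ has degree exactly $2$. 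A connected $2$-regular loopless multi-graph is a single cycle through all its vertices, and the only non-crossing such cycle on the consecutive integers $\{k,\dots,p\}$ consists of the chain edges $(q,q+1)$ for $q=k,\dots,p-1$ together with the outer edge $(k,p)$. (The degenerate base case $p=k$ is handled separately: $S=\{2k-1,2k\}$, and the only odd-even pairing is $(2k-1,2k)$, which is simultaneously (i) and (ii) with empty chain.)

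Finally, I would decode the super-arc pattern at the $\pi$-level via the odd-even constraint. Each short super-arc $(q,q+1)$ is realized as $(2q-1,2q+2)$ or $(2q,2q+1)$, and the outer super-arc $(k,p)$ as $(2k-1,2p)$ or $(2k,2p-1)$. Assigning to each super-position $q\in V$ a bit $\epsilon_q\in\{0,1\}$ recording which of $\{2q-1,2q\}$ sits in the edge going ``forward'' (to $q+1$ for $q<p$, or along the outer edge for $q=p$), the odd-even requirement on each short edge propagates as $\epsilon_q=\epsilon_{q+1}$ for $q=k,\dots,p-1$ and the same requirement on the outer edge gives $\epsilon_k=\epsilon_p$; so $\epsilon$ is globally constant. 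The choice $\epsilon\equiv 1$ gives $\pi(2t)=2t+1$ for $t=k,\dots,p-1$ together with $\pi(2k-1)=2p$, i.e.\ case~(i); the choice $\epsilon\equiv 0$ gives $\pi(2t-1)=2(t+1)$ together with $\pi(2k)=2p-1$, i.e.\ case~(ii). The step I expect to be the main obstacle is the first: correctly matching a properly crossing pair of super-arcs with the configuration of Lemma~\ref{lemma:5} and cleanly ruling out self-loops from the segment hypothesis; once the super-graph is pinned down as a Hamilton cycle, the rest is routine combinatorics.
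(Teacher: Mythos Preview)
Your approach is correct and amounts to a cleaner reorganization of the paper's argument. Both proofs rest on Lemma~\ref{lemma:5} as the sole analytic input; the difference is in how the remaining combinatorics is packaged. The paper argues directly at the level of $\pi$: it posits a $\pi$-arc between non-adjacent $\delta$-blocks at positions $q<r$ with $r-q>1$, and uses Lemma~\ref{lemma:5} repeatedly to force $q=k$, $r=p$ and to identify the inward connections $\pi(2(k{+}1)-1)=2k$, $\pi(2(p{-}1))=2p-1$; it then relies on the stated reduction that ``each two consecutive $\delta$-blocks in $S$ are $\pi$-connected suffices,'' leaving the degree-count that turns this into the two explicit patterns implicit. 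Your super-graph $G$ makes the same structure transparent: Lemma~\ref{lemma:5} becomes precisely the non-crossing condition on $G$, the segment hypothesis gives connectedness and rules out self-loops, and $2$-regularity is immediate; these together pin down $G$ as the unique non-crossing Hamilton cycle on a line, after which your $\epsilon$-bit propagation cleanly produces patterns (i) and (ii). What your route buys is an explicit and complete endgame and a reusable picture; what the paper's direct case analysis buys is avoiding the small graph-theoretic detour of proving uniqueness of the non-crossing Hamilton cycle, since it effectively reconstructs the cycle edge by edge from the endpoints. The one place to be careful in your write-up is exactly where you flagged it: when matching a crossing pair of super-arcs to Lemma~\ref{lemma:5}, make the role of the strict inequalities $k<p<t$ and the condition ``$d>2t$ or $d<2k-1$'' explicit for both orientations of the crossing.
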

\begin{proof}
The result is trivial for 
$ \# S = 2 $. 
If 
$ \# S > 2 $,
it suffices to show that  each two consecutive blocks of 
$\delta $ 
that are contained in $ S $ have elements connected by 
$ \pi $. 

Suppose 
$ q , r \in [m] $ 
are such that
$ r - q > 1 $
and there exists 
$ a \in \{2 q - 1, 2 q \} $
and
$ b \in \{  2 r -1, 2 r \} $
such that
$ \pi( a ) = b $.
In other words we have two non-consecutive  
blocks of $\delta$ that are connected by $\pi$.

Since 
$ \pi \in \cP_2(m, 2) $,
we have that $ a $ and $ b $ have different parities.
Suppose first that 
$ a = 2 q -1 $ 
and $ b = 2r $.
If the set 
$
S_1 = \{ 2q+1, 2 q + 2, \dots, 2p-3, 2p -2 \} $
is invariant under $\pi $, 
then 
$  S $ 
is not a segment of 
$ \pi \vee \delta $.
Since $ \pi $ connects only numbers of different parity, there are at least 2 elements of $ S_1 $, one even and one odd,
whose image under $\pi $ is outside $ S_1 $.
From Lemma \ref{lemma:5}, they can only be connected to elements of 
$\{ 2q-1, 2q, 2r-1, 2r \} $.
But
$ \pi(2q -1) = 2r $, 
so there exist some
$ 2t-1, 2s \in S_1 $ 
such that
$ \pi(2t -1) = 2q $
and 
$ \pi(2s) = 2r -1 $.
In particular, 
$ S_0 = \{ 2q -1, 2q, 2q+1, \dots, 2r \} $
is a block of 
$ \pi \vee \delta $. 
So $ q = k $ and $ r = p $.

If 
$ s < t $, 
then
$ \pi(2t -1) = 2q < 2s-1 $ 
and 
$ \pi (2s) = 2r -1 > 2t $.
Lemma \ref{lemma:5} gives then that
$\displaystyle \lim_{N \rightarrow \infty} \mathcal{V}(\pi, \os) = 0 $.
Hence $ t \leq s $.

If 
$ t \neq q + 1 $,
then
$ S_2 = \{ 2(q+1) -1, 2(q+1), \dots, 2(t-1)-1, 2(t-1) \} $
is non-void. 
Lemma \ref{lemma:5} gives that
\[
\pi(S_2) \subseteq S_1 \cup \{ 2q -1, 2q, 2t-1, 2t\} \]
But 
$ S_2 \neq S $,  
so 
$ \pi(S_2) \neq S_2$. 
Also, 
$ \pi(\{ 2q -1, 2q \} ) = \{ 2t -1, 2r \} $,
therefore
$ \pi(S_2) = S_2 \setminus \{ \pi( 2t )\} $, 
which contradicts the bijectivity of
$ \pi$. 
Same argument gives also that 
$ s+1 = r $.

The argument for the case $ a = 2 q $ and $ b = 2r -1 $ is identical.
\end{proof}

\section{A result on partial transposes of self-adjoint matrices}

Following \cite{mingo-popa-wishart}, we will define partial transposes as follows.
Suppose that  $X $ is a $ bd\times bd $ matrix with entries in some algebra $ \mathcal{A} $.
 We can see $ X $ as a 
 $ b \times b$
  block-matrix,
$ X = [ X_{i, j}]_{i, j=1}^b $,
 with the entries
  $X_{i, j} $
   being 
   $ d\times d $
    matrices over 
    $ \mathcal{A} $.
We denote by $X^{\Gamma(b,d)} $ the \emph{$(b, d)$-partial transpose} of $ X $, that is the matrix obtained by transposing each block of $ X $, but keeping the positions of the blocks:
\[
X^{\Gamma(b, d)} = \big[ X_{i, j}^t \big]_{i, j=1}^b.
\] 

Equivalently, if 
$ bd = M $,
for each
$ (i, j) \in[ M ]^2$
there exist some unique 
$ \alpha_1, \alpha_2 \in [ b ] $
and
$ \beta_1, \beta_2 \in [ d ]  $
such that
\[
(i, j) = \big( (\alpha_1 -1) d + \beta_1,(\alpha_2 -1) d + \beta_2  \big),
\]
i.e. the
$(i, j) $ entry of $ X $
is the 
$ (\beta_1, \beta_2) $
entry 
of the 
$(\alpha_1, \alpha_2) $
block of size 
$ d \times d $ of $ X $.
Then
\begin{align} 
\Gamma(b, d) (i, j) &=  \Gamma (b, d)\big((\alpha_1 -1) d + \beta_1,
(\alpha_2 -1) d + \beta_2  \big)\label{gamma:1}\\
&= \big(  ( \alpha_1 - 1) d  + \beta_2, ( \alpha_2 - 1) d + \beta_1 \big).\nonumber
\end{align} 

\begin{defn}
For 
$ \sigma, \tau \in \mathcal{S}([ M ]^2 ) $
denote
\begin{align*}
\mathfrak{c}(\sigma, \tau) 
& = 
\# \big\{ (i, j) \in [ M ]^2 
:\ \sigma (i, j) = \tau(i, j)
\big\}\\
\mathfrak{j}(\sigma, \tau )
&  = 
\# \big\{ (i, j, l) \in [ M ]^3  :\
\sigma(i, j) = \tau ( i, l) 
\big\}
\end{align*}
\end{defn}

With the notations above, we have the following.
\begin{thm}\label{thm:gamma}
Let $ M $ be a positive integer and
$ b, d, B, D $
be such that 
\[ bd = BD = M. \]
Suppose that 
$ d \leq D $
and let 
$ Q = dL= Dl $
be the least common multiple of 
$ d $ and $ D $.
Then
\[
\frac{M^2}{L^2} \leq 
\mathfrak{c} \big(
\Gamma (b, d), \Gamma(B, D)\big)
=
\mathfrak{j}\big( \Gamma (b, d), \Gamma(B, D)\big)
\leq \frac{M^2}{L}.
\]
\end{thm}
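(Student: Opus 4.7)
The plan is to reduce the equality $\Gamma(b,d)(i,j) = \Gamma(B,D)(i,j)$ to a simple condition on residues, use this to show $\mathfrak{c} = \mathfrak{j}$ and to recast both counts as $(M/Q)^2 N_0$ for a quantity $N_0$ living on $[0, Q-1]^2$, and then bound $N_0$ by elementary estimates.

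For each index $i \in [M]$, I would use the two decompositions $i = (\alpha_1 - 1)d + \beta_1 = (A_1 - 1)D + B_1$ from (\ref{gamma:1}), and likewise for $j$ (and for $l$ in the $\mathfrak{j}$-count). The identity $(\alpha_1 - 1)d - (A_1 - 1)D = B_1 - \beta_1$ (with its $j$-analog) lets both coordinate equations in $\Gamma(b,d)(i,j) = \Gamma(B,D)(i,j)$ collapse to a single condition $\beta_2 - \beta_1 = B_2 - B_1$. Applying the same manipulation to $\Gamma(b,d)(i,j) = \Gamma(B,D)(i,l)$, the first coordinate yields $\beta_2 - \beta_1 = L_2 - B_1$ (where $L_2$ comes from $l$), and substituting into the equation from the second coordinate forces $j = l$. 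Thus $\mathfrak{j} = \mathfrak{c}$, and both quantities count the pairs $(i,j) \in [M]^2$ satisfying $\beta_2 - \beta_1 = B_2 - B_1$.

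Because $\beta_k$ depends only on $i_k \bmod d$ and $B_k$ only on $i_k \bmod D$, the condition depends only on the residues of $i$ and $j$ modulo $Q$. Since $Q \mid M$, each residue class in $[0, Q-1]^2$ has exactly $(M/Q)^2$ preimages, so $\mathfrak{c} = (M/Q)^2 N_0$, where $N_0$ is the number of $(u,v) \in [0, Q-1]^2$ satisfying the condition. Using $M/Q = b/L$ and $Q = dL$, the target inequalities become $d^2 \le N_0 \le dQ$. To compute $N_0$, set $g = \gcd(d,D)$; the CRT map $u \mapsto (u \bmod d, u \bmod D)$ is a bijection from $[0, Q-1]$ onto $\Omega := \{(\beta, B) \in [0, d-1] \times [0, D-1] : \beta \equiv B \pmod g\}$. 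With $M(c) := \#\{(\beta, B) \in \Omega : B - \beta = c\}$, the condition factors as $B_1 - \beta_1 = B_2 - \beta_2 = c$ for some $c$, giving
\[
N_0 = \sum_c M(c)^2 \qquad \text{and} \qquad \sum_c M(c) = Q.
\]
A direct count shows $M(c) \le \min(d, D - |c|) \le d$, with equality $M(0) = d$ because $d \le D$ allows $\beta = B$ to range freely over $[0, d-1]$. The lower bound $N_0 \ge M(0)^2 = d^2$ and the upper bound $N_0 \le (\max_c M(c)) \sum_c M(c) = dQ$ complete the argument.

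The main obstacle is the algebraic reduction in the first step: reconciling the two competing decompositions of the same index $i$, and in particular verifying that the $\mathfrak{j}$-condition genuinely forces $j = l$ rather than merely constraining $j$ and $l$ to a larger set. Once this reduction is in place, the CRT parametrization and the sum-of-squares bound $\sum_c M(c)^2 \le \max_c M(c) \cdot \sum_c M(c)$ are mechanical.
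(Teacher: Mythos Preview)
Your proof is correct and somewhat different from the paper's. Both arguments reduce to $Q\times Q$ blocks (you phrase this as reducing modulo $Q$), and both show $\mathfrak{j}=\mathfrak{c}$ by proving that the extra variable in the $\mathfrak{j}$-condition is forced. But the execution diverges.

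You isolate the single scalar condition $\beta_2-\beta_1=B_2-B_1$ characterizing $\Gamma(b,d)(i,j)=\Gamma(B,D)(i,j)$, and then run a CRT parametrization plus the sum-of-squares bound $\sum_c M(c)^2\le(\max_c M(c))\sum_c M(c)$ to get $d^2\le N_0\le dQ$. The paper never writes down that scalar condition. Instead, for the lower bound it exhibits the explicit $d\times d$ corner of each $Q$-block as a set of common fixed points; for $\mathfrak{c}=\mathfrak{j}$ it works with the transpose-dual formulation $\Gamma(b,d)(i,j)=\Gamma(B,D)(k,j)$ inside the first $Q$-block and uses the divisibility observation $(\alpha_1-\alpha_2)d=(\gamma_1-\gamma_2)D$ with $|\alpha_1-\alpha_2|<L$, $|\gamma_1-\gamma_2|<l$ to force $\alpha_1=\alpha_2$ and $k=i$; this same equality $\alpha_1=\alpha_2$ then yields the upper bound via the containment $K_{\mathfrak c}(1,1)\subseteq\{((\alpha-1)d+\beta_1,(\alpha-1)d+\beta_2):\alpha\in[L],\ \beta_1,\beta_2\in[d]\}$, a set of size $Ld^2=dQ$. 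Your approach is cleaner and more portable (the single condition and CRT framework would adapt readily to other entry permutations), while the paper's is more hands-on and extracts the extra structural fact $\alpha_1=\alpha_2$ along the way.
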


\begin{proof}
Let $ X $ be a 
$ M \times M $ 
matrix. 
Since  both 
$ d $ and $ D $ divide $ M $,
so does their least common multiple 
$ Q $.
So we can see 
$ X $ 
as a
$ M/ Q \times \ M/Q $ 
block matrix, with each block entry a
$ Q \times Q $
matrix.

Fix
$ m_1, m_2 \in [ M / Q ] $ 
and let
\[ 
K(m_1, m_2) = \big\{
\big( (m_1 - 1) Q + \mu_1, (m_2 -1) Q + \mu_2 \big) : \  \mu_1, \mu_2 \in [ Q ]    
\big\}
\]
(i.e. 
$ K(m_1, m_2) $ 
is he set of indices of entries from the $(m_1, m_2) $
block of
size 
$ Q \times Q $ 
of $ X $), and let
\begin{align*}
K_{\mathfrak{c}} (m_1, m_2) & 
= K (m_1 , m_2) \cap
\big\{
(i, j) \in [ M ]^2:\
\Gamma(b, d) (i, j) = \Gamma(B, D) (i, j)
\big\}\\
K_{\mathfrak{j}} ( m_1, m_2) & = 
K(m_1, m_2) \cap \big\{ 
(i, j) \in [ M ]^2:\ 
\Gamma(b, d) (i, j) = \Gamma(B, D) (k, j) \\
& \hskip1.2in\textrm{for some } k \in [ M ]
\big\}.
\end{align*} 
Since
$ \Gamma (b, d ) \big( K(m_1 , m_2) \big)
= \Gamma(B, D ) \big( K(m_1, m_2) \big)
= K(m_1, m_2) $,
it suffices to show that
\begin{equation}\label{k:ineq}
\frac{Q^2}{L^2} = d^2 \leq
\# K_{\mathfrak{c}}(m_1, m_2)
= \# K_{\mathfrak{j}} (m_1, m_2)
\leq d^2L = \frac{Q^2}{L}.
\end{equation} 
For the first inequality in (\ref{k:ineq}), note that, for 
$ \mu_1, \mu_2 \in [ d] \subseteq [ D ] $, we have that  
\[
\big( (m_1 - 1)Q +\mu_1, (m_1 -1)Q + \mu_2 \big)
= 
\big( (m_1 - 1)L \cdot d +\mu_1, (m_1 -1)L\cdot d + \mu_2 \big)
\]   
and equation (\ref{gamma:1}) gives that
\[
\Gamma(b, d) (i, j) =    
\big( (m_1 - 1)L \cdot d +\mu_2, (m_2 -1)L\cdot d + \mu_1 \big)
= \Gamma(M/Q, Q)(i, j).
\]
Similarly, since 
$ d \leq D $,
we have that
\begin{align*} 
\Gamma(B, D) (i, j)= &
\Gamma(B, D)
\big( ( m_1 - 1) l \cdot D + \mu_1,  (m_1 - 1) l \cdot D + \mu_2 \big)\\
& 
= \big( ( m_1 - 1) l \cdot D + \mu_2,  (m_1 - 1) l \cdot D + \mu_1 \big)  = \Gamma(M/Q, Q)(i, j) 
\end{align*}
therefore
\[
\big\{
\big( (m_1 - 1) Q + \mu_1, (m_2 -1) Q + \mu_2 \big) : \  \mu_1, \mu_2 \in [ d ]    
\big\} \subseteq K_{\mathfrak{c}},
\] 
which gives the first inequality in (\ref{k:ineq}).

We shall prove the equality 
$
\# K_{\mathfrak{c}}(m_1, m_2)
= \# K_{\mathfrak{j}}(m_1, m_2)
$ 
in two steps. 
First, for
$ m_1 = 1 $ and $ m_2 =1 $,
let
\begin{align*}
(i, j) & = 
\big(
(\alpha_1 -1)d + \beta_1, 
(\alpha_2 -1)d + \beta_2
\big)\\
(k, j) &=
\big(
( \gamma_1 -1)D + \delta_1,
( \gamma_2 -1 )D + \delta_2
\big)
\end{align*}
with
$ \alpha_1, \alpha_2\in [ L ] $,
$ \beta_1, \beta_2 \in [ d ] $,
$ \gamma_1, \gamma_2 \in [ l ] $
and 
$ \delta_1, \delta_2 \in [ D ]$.

Then
$ \Gamma( b, d) (i, j) = \big( 
(\alpha_1 -1)d + \beta_2),
(\alpha_2 -1) d + \beta_1) \big) $
which is point on the vertical
line passing through 
$\big( (\alpha_1 -1)d + \beta_2),
(\alpha_2 -1)d + \beta_2 ) \big) $.
Similarly,
$ \Gamma( B, D)(k, j) $
is a point 
on the vertical line passing through
$\big( (\gamma_1 - 1) D + \delta_2,
(\gamma_2 - 1) D + \delta_2
\big).$
But
\[  (\alpha_2 -1) d + \beta_2 =
j
= (\gamma_2 - 1) D + \delta_2,
\]
so the equality
$ \Gamma(b, d) (i, j) = \Gamma (B, D)(k, j) $
gives that
\[
(\alpha_1 -1) d + \beta_2 
= 
( \gamma_1 -1) D + \delta_2.
\]
Hence
$ (\alpha_1 - \alpha_2 ) d = (\gamma_1 - \gamma_2 ) D $,
which, since 
$ \alpha_1, \alpha_2 \in [ L ] $
and
$ \gamma_1, \gamma_2 \in [ l ] $,
gives that
$ \alpha_1 = \alpha_2 $
and 
$ \gamma_1 = \gamma_2 $.
So
$ \Gamma(b, d) (i, j) = (j, i) $
and 
$ \Gamma(B, D) (k, j) = (j, k)$,
which gives 
$ i= k $.

Furthermore, since 
$ \alpha_1 = \alpha_2 $,
note that we have also obtained
\begin{equation}\label{k:c:3}
K_{\mathfrak{c}}(1, 1) \subseteq 
\big\{ 
\big(  ( \alpha -1)d + \beta_1, ( \alpha -1) d + \beta_2 \big)
: \  \alpha \in [ L ] , \beta_1 , \beta_2 \in [ d ] 
\big\}
\end{equation}

Next, for arbitrary 
$ m_1, m_2 \in [ N / Q ] $
and
$(i, j), (k, j) \in K(m_1, m_2) $,
we have that
$  i =  (m_1 -1) Q + i^\prime $,
$ j =   (m_2 -1) Q + j^\prime $
and
$ k =  (m_1 - 1) Q + k^\prime $
with
$ i^\prime, j^\prime, k^\prime \in [ Q ] $, 
that is
$ (i^\prime, j^\prime), (k^\prime, j^\prime) \in K (1, 1) $.
Moreover,
\begin{align*}
& \Gamma(b, d)(i, j)  =
( (m_1 - 1)Q, (m_2-1)Q) 
+ \Gamma(b, d) (i^\prime, j^\prime)\\
& \Gamma (B, D)(k, j) =
( (m_1 - 1) Q,  (m_2-1) Q )
+ \Gamma(B, D ) (k^\prime, j^\prime)
\end{align*}
and the conclusion follows from the case
$ m_1 = m_2 =1 $.

For the third part of (\ref{k:ineq}),  let
$(i , j) \in K_{\mathfrak{c}}(m_1, m_2) $.
The argument above gives that  
\[
(i, j) = \big( (m_1 - 1) Q, ( m_2 -1 ) Q \big)  
+ ( i^\prime, j^\prime) 
\] 
for some 
$ (i^\prime, j^\prime ) \in K_{ \mathfrak{c}}(1, 1) $.
Therefore, according to (\ref{k:c:3}),
\[
\#K_{\mathfrak{c}}(m_1, m_2) \leq 
\# \big\{ ( \alpha, \beta_1, \beta_2 ):\  \alpha\ in [ L ], 
\beta_1, \beta_2 \in [d ] \big\}   = \frac{Q^2}{L}
\]
which gives the conclusion.
\end{proof}
\begin{thm}\label{thm:2:3}
Suppose that
$ \big\{ b_N \big\}_N $,
$ \big\{ d_N \big\}_N $,
$ \big\{ B_N \big\}_N $,
$ \big\{ D_N \big\}_N $
and
$ \big\{ M_N \big\}_N $
are sequences of positive integers such that
$ \big\{ M_N \big\}_N $
is strictly increasing 
and
\[
M_N = b_N \cdot d_N = B_N \cdot D_N .\]
Denote 
$ \sigma_N = \Gamma ( b_N, d_N ) $
and
$ \tau_N = \Gamma ( B_N, D_N ) $.
\begin{enumerate}
\item[(i)] If 
$ \displaystyle 
\lim_{N \rightarrow \infty}
D_N = \infty $, 
then
\[
\# \big\{ (i, j, k) \in [M_N]^3:\
\pi_2 \circ \sigma (i, j) = 
\pi_2 \circ \tau (k,j) 
\big\} = o(M_N^3).
\]
\item[(ii)] If
$ \displaystyle
\lim_{N \rightarrow \infty}
B_N = \infty $,
then
\[
\# \big\{ (i, j, k) \in [M_N]^3:\
\pi_1 \circ \sigma (i, j) = 
\pi_1 \circ \tau (k, j ) 
\big\} = o(M_N^3).
\]
\end{enumerate}
\end{thm}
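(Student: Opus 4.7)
The plan is to compute both sides of the desired constraint by brute force, using the explicit formula (\ref{gamma:1}) for the partial transpose, and to count solutions directly. No appeal to the heavier machinery of Section 2 is needed, since the quantities here involve only the permutations $\Gamma(b_N, d_N)$ and $\Gamma(B_N, D_N)$, not Wishart expectations.

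For part (i), write each $i \in [M_N]$ uniquely as $i = (\alpha_1-1)d_N + \beta_1$ with $\alpha_1 \in [b_N]$, $\beta_1 \in [d_N]$; similarly $j = (\alpha_2-1)d_N + \beta_2$ and, with respect to the $(B_N,D_N)$-decomposition, $k = (\gamma_1-1)D_N + \delta_1$, $j = (\gamma_2-1)D_N + \delta_2$. From (\ref{gamma:1}),
\[
\pi_2\circ\sigma_N(i,j) = (\alpha_2-1)d_N + \beta_1, \qquad
\pi_2\circ\tau_N(k,j) = (\gamma_2-1)D_N + \delta_1.
\]
Once $j$ is fixed, the integers $\alpha_2,\beta_2,\gamma_2,\delta_2$ are fixed too, and the equality $\pi_2\circ\sigma_N(i,j)=\pi_2\circ\tau_N(k,j)$ reduces to the single linear constraint $\beta_1 - \delta_1 = (\gamma_2-1)D_N - (\alpha_2-1)d_N$ in the free parameters $\alpha_1 \in [b_N]$, $\gamma_1 \in [B_N]$, $\beta_1 \in [d_N]$, $\delta_1 \in [D_N]$.

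The number of pairs $(\beta_1,\delta_1) \in [d_N]\times[D_N]$ satisfying a fixed linear relation of this form is at most $\min(d_N,D_N)$, while $\alpha_1$ and $\gamma_1$ remain entirely free. Summing over $j \in [M_N]$ and using $b_N d_N = B_N D_N = M_N$ gives the bound
\[
\#\big\{(i,j,k) : \pi_2\circ\sigma_N(i,j) = \pi_2\circ\tau_N(k,j)\big\}
\;\leq\; M_N \cdot b_N \cdot B_N \cdot \min(d_N,D_N)
\;=\; \frac{M_N^3}{\max(d_N,D_N)}.
\]
Since $D_N \to \infty$ forces $\max(d_N,D_N) \to \infty$, this is $o(M_N^3)$, establishing (i).

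Part (ii) is completely symmetric: the equality $\pi_1\circ\sigma_N(i,j) = \pi_1\circ\tau_N(k,j)$ translates, after the same block decomposition, into $(\alpha_1-1)d_N - (\gamma_1-1)D_N = \delta_2 - \beta_2$, a linear equation in the pair $(\alpha_1,\gamma_1) \in [b_N]\times[B_N]$ whose number of solutions is at most $\min(b_N,B_N)$, while $\beta_1 \in [d_N]$ and $\delta_1 \in [D_N]$ remain free. The resulting bound is $M_N^3/\max(b_N,B_N) = o(M_N^3)$ under the assumption $B_N \to \infty$. There is no real obstacle in this argument; the only care needed is to keep the two block decompositions of the shared index $j$ straight and to observe that the counting constraint lives entirely in one of the two coordinate directions, which is what decouples it into a single linear equation.
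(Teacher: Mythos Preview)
Your proof is correct and follows essentially the same route as the paper: both arguments write $i,j,k$ in block coordinates via (\ref{gamma:1}), reduce the condition to a single linear equation among those coordinates, and count the free parameters. The only difference is cosmetic: the paper fixes $(\alpha_1,\alpha_2,\beta_1,\beta_2)$ and observes that $\delta_1$ (respectively $\gamma_1$) is then determined, obtaining the bounds $M_N^3/D_N$ and $M_N^3/B_N$, whereas you count solution pairs to the linear constraint directly and get the slightly sharper $M_N^3/\max(d_N,D_N)$ and $M_N^3/\max(b_N,B_N)$; either bound suffices under the stated hypotheses.
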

\begin{proof}
Let
$ \alpha_1, \alpha_2 \in [ b_N ] $,
$ \beta_1, \beta_2 \in [ d_N ] $,
$ \gamma_1, \gamma_2 \in [ B_N ] $
and
$ \delta_1, \delta_2 \in [ D_N ] $ 
be such that
\begin{align*}
i & = 
(\alpha_1 - 1) d_N + \beta _1 \\
j & =
(\alpha_2 -1) d_N + \beta_2 = (\gamma_2 -1) D_N + \delta_2\\
k & = (\gamma_1 - 1) D_N + \delta_1.
\end{align*}
In particular, the couple 
$ (\gamma_2, \delta_2) $
is uniquely determined 
by
$ (\alpha_1, \alpha_2, \beta_1, \beta_2 ) $.

First, suppose that
$ \pi_2 \circ \sigma (i, j) = 
\pi_2 \circ \tau (k,j)
$. 
The condition  is equivalent to
\[(\alpha_2 - 1)d_N + \beta_1 = 
(\gamma_2 -1) D_N + \delta_1,
\]
hence 
$ \delta_1 $
is  uniquely determined by
$(\alpha_1, \alpha_2, \beta_1, \beta_2) $. 
So
\begin{align*}
\# \big\{ (i, j, k) \in  [M_N]^3 & :\
\pi_2 \circ \sigma (i, j) = 
\pi_2 \circ \tau (k,j) 
\big\}\\
\leq  &
\big\{
(\alpha_1, \alpha_2, \beta_1, \beta_2, \gamma_1):\
\alpha_1, \alpha_2 \in [ b_N ], \beta_1, \beta_2 \in [ d_N ], \gamma_1 \in [ B_N ] 
\big\}\\
= & b_N^2 d_N^2 B_N = \frac{M_N^3}{D_N}. 
\end{align*}
But
$ \displaystyle  \frac{M_N^3}{D_N}
= o(M_N^3) $
if
$ \displaystyle 
\lim_{N \rightarrow \infty} D_N = \infty $
and part (i) follows.   

Next, suppose that
$ \pi_1 \circ \sigma (i, j)
= \pi_1 \circ \tau(k, j) $.
The condition is equivalent to
\[
(\alpha_1 -1 ) d_N + \beta_1 
= (\gamma_1 - 1 ) D_N + \delta_2, 
\]
hence
$ \gamma_1 $ 
is now uniquely determined by
$(\alpha_1, \alpha_2, \beta_1, \beta_2). $
Therefore
\begin{align*}
\# \big\{ (i, j, k) \in [M_N ]^3 & :\ 
\pi_1 \circ \sigma (i, j)
= \pi_1 \circ \tau(k, j)
\big\}\\
\leq &
\# 
\big\{
(\alpha_1, \alpha_2, \beta_1, \beta_2, \delta_1) :\
\alpha_1, \alpha_2 \in [ b_N ], 
\beta_1, \beta_2 \in [ d_N], 
\delta_1 \in [ D_N ] 
\big\}\\
= & b_N^2 d_N^2 D_N = \frac{M_N^3}{B_N},
\end{align*} 
and
$ \displaystyle 
\frac{M_N^3}{B_N} = o(M_N^3) $
if
$ \displaystyle 
\lim_{N \rightarrow \infty}  B_N = \infty $
hence part (ii) follows. 
\end{proof}

\section{Main results on asymptotic freeness}\label{main:section}

\subsection{Partial transposes}
\begin{lemma}\label{lemma:nf:1}
Suppose that 
$ \big\{ M_N \big\}_N $
is a strictly increasing sequence of positive integers and that
$ W_N $
is a $ M_N \times M_N $
Wishart random matrix 
for each $ N $.

If 
$ \sigma_N $ and $ \tau_N $
are symmetric permutations from
$ \mathcal{S}([ M_N ]^2) $
such that
\[
\liminf_{N \rightarrow \infty}
\frac{\# \big\{ 
(i, j) \in [ M_N ]^2:\ \sigma_N(i, j) = \tau_N (i, j)
\big\}}{N^2} > 0
\]
then 
$ W_N^{\sigma_N } $ 
and 
$ W_N^{\tau_N } $
are \emph{not} asymptotically (as 
$ N \rightarrow \infty $) 
free. 
\end{lemma}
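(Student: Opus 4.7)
My plan is to disprove asymptotic freeness by exhibiting a single mixed second moment whose limit (or liminf) does not match the value forced by freeness. Concretely, I will compute
\[
\lim_{N\to\infty} E\circ\tr\bigl(W_N^{\sigma_N} W_N^{\tau_N}\bigr)
\]
via Wick's formula and compare it to the product $\lim E\circ\tr(W_N^{\sigma_N})\cdot \lim E\circ\tr(W_N^{\tau_N})$; if these disagree (equivalently, the covariance of centered variables has nonzero liminf) then the two families cannot be asymptotically free.

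First I would record that $\tr(W^\sigma) = \tr(W)$ whenever $\sigma$ is a symmetric permutation: by the observation in Section~\ref{section:vV}, $\sigma(i,i) = (c(i),c(i))$ for some map $c:[M_N]\to[M_N]$, and injectivity of $\sigma$ forces $c$ to be a bijection, so $\sum_i (W^\sigma)_{ii} = \sum_i W_{c(i),c(i)} = \tr(W)$. In particular $\lim_N E\circ\tr(W_N^{\sigma_N}) = \lim_N E\circ\tr(W_N^{\tau_N}) = \lim_N P_N/M_N = c$.

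Next I apply (\ref{eq:wick}) with $m=2$. The set $\cP_2(4,2)$ has exactly two elements, $\pi_{\mathrm I} = \{(1,2),(3,4)\}$ and $\pi_{\mathrm{II}} = \{(1,4),(2,3)\}$. For $\pi_{\mathrm I}$ the constraint $(l_k,l_{-k}) = \sigma_k(i_k,i_{-k})$ with $l_k=l_{-k}$ combined with the symmetry property $\sigma(a,b)=(c,c) \Leftrightarrow a=b$ collapses the $i$-indices to a single value, and $j_1,j_2\in[P_N]$ remain free; hence $\mathcal V(\pi_{\mathrm I},(\sigma_N,\tau_N)) = M_N P_N^2/M_N^3 \to c^2$. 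For the crossing pairing $\pi_{\mathrm{II}}$, the nonvanishing constraints impose $l_1=l_{-2}$, $l_2=l_{-1}$ and $j_1=j_{-2}$, $j_2=j_{-1}$; together with $j_k=j_{-k}$ from $\cI(2)$ all four $j$-indices are equal. The key step is using symmetry of $\tau_N$, namely $\tau_N(i_{-1},i_1) = t\circ\tau_N(i_1,i_{-1})$, to rewrite the two $l$-equalities as the single coincidence $\sigma_N(i_1,i_{-1}) = \tau_N(i_1,i_{-1})$. Thus
\[
\mathcal V(\pi_{\mathrm{II}},(\sigma_N,\tau_N)) \;=\; \frac{P_N\cdot \mathfrak c(\sigma_N,\tau_N)}{M_N^3} \;=\; \frac{P_N}{M_N}\cdot\frac{\mathfrak c(\sigma_N,\tau_N)}{M_N^2}.
\]

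To conclude, the hypothesis guarantees $\liminf_N \mathfrak c(\sigma_N,\tau_N)/M_N^2 > 0$, so $\liminf_N \mathcal V(\pi_{\mathrm{II}},(\sigma_N,\tau_N)) \geq c\cdot\liminf_N \mathfrak c(\sigma_N,\tau_N)/M_N^2 > 0$, and therefore
\[
\liminf_{N\to\infty} E\circ\tr(W_N^{\sigma_N}W_N^{\tau_N}) \;\geq\; c^2 + c\cdot\liminf_N \frac{\mathfrak c(\sigma_N,\tau_N)}{M_N^2} \;>\; c^2.
\]
However, asymptotic freeness would force this limit to equal $c\cdot c = c^2$ (apply freeness to the centered variables $W_N^{\sigma_N} - c_N\cdot \id$ and $W_N^{\tau_N} - c_N\cdot\id$), contradicting the strict inequality above. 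The only nonroutine step is the reduction of the $\pi_{\mathrm{II}}$-constraints to the coincidence set $\mathfrak c(\sigma_N,\tau_N)$ via the symmetry hypothesis on $\sigma_N$ and $\tau_N$; the remaining calculations are a direct application of Wick's formula for $m=2$.
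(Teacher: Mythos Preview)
Your proof is correct and follows essentially the same route as the paper: both compute the mixed second cumulant $\kappa_2(W_N^{\sigma_N},W_N^{\tau_N})$ via the Wick expansion for $m=2$, use the symmetry of $\sigma_N,\tau_N$ to reduce the contribution of the crossing pairing $\{(1,4),(2,3)\}$ to the coincidence count $\mathfrak c(\sigma_N,\tau_N)$, and conclude that $\kappa_2$ has strictly positive $\liminf$. Your write-up is slightly more explicit than the paper's (you separately compute both pairings and record the fact $\tr(W^\sigma)=\tr(W)$ for symmetric $\sigma$, whereas the paper jumps directly to the connected term), but the argument is the same.
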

\begin{proof}
We have that
\begin{align*}
\kappa_2 (
W_N^{\sigma_N}, W_N^{\tau_N } )
& = 
E \circ \tr \big(
W_N^{\sigma_N} \cdot W_N^{\tau_N } \big) 
- E \circ \tr \big(
W_N^{\sigma_N} \big) \cdot 
E \circ \tr \big(W_N^{\tau_N }\big)\\
= & \sum_{
\substack{i_1, i_2 \in [ M_N],\\
j_2, j_2 \in [ P_N ]}}
\frac{1}{N} 
E \big( g_{ l_1, j_1}
\overline{ g_{l_{-2}, j_2} }
\big) 
\cdot
E\big( \overline{g_{l_{-1}, j_1}}
g_{l_2, j_2}
\big)\\
= &
\sum_{
\substack{i_1, i_2 \in [ M_N],\\
j_2, j_2 \in [ P_N ]}}
\frac{1}{N^3} \delta_{(l_1, l_{-1})}^{(l_{-2}, l_2)} \delta_{j_1}^{j_2}
\end{align*}
where
$(l_1, l_{-1}) = \sigma_N(i_1, i_2) $
and 
$(l_2, l_{-2}) = \tau_N (i_2, i_1) $.
Using that 
$ \sigma_N $ and $\tau_N $ 
are symmetric, we obtain
\[ 
\kappa_2 
\big(W_N^{\sigma_N}
\cdot W_N^{\tau_N} \big)
= \frac{P_N}{M_N} 
\cdot
\frac{ \# \{ (i_1, i_2) :\
\sigma_N(i_1, i_2) = \tau_N(i_1, i_2) \}}{M_N^2}
\]
So 
$ \displaystyle 
\lim_{N \rightarrow \infty} 
\kappa_2 (W^\sigma \cdot W^\tau) \neq 0 .
$
\end{proof}

An immediate consequence of Lemma \ref{lemma:nf:1} and Theorem \ref{thm:gamma} is the following.

\begin{cor}\label{cor:nf:1}
Suppose that 
$ \big\{ b_N \big\}_N $,
$ \big\{ d_N \big\}_N $,
$ \big\{ B_N \big\}_N $,
and
$ \big\{ D_N \big\}_N $,
are sequences of positive integers 
such that
$ d_N \leq D_N $, 
that
$ b_N \cdot d_N = B_N \cdot D_N = M_N$
and the sequence
$ \big\{ M_N \big\}_N $
is strictly increasing.

Define 
$ Q_N = d_N \cdot L_N $ 
to be the least common multiple of 
$ d_N $ and $ D_N $.

Suppose also that
$ W_N $
is a 
$ M_N \times M_N $
Wishart matrix for each $ N $.
If the sequence
$ \big\{ L_N \big\}_N $
is bounded, then
$  W_N^{\Gamma(b_N, d_N)} $
and
$ W_N^{\Gamma(B_N, D_N)} $
are \emph{not} asymptotically free.
\end{cor}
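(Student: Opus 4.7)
The plan is to chain together two results already in the paper: the coincidence lower bound supplied by Theorem~\ref{thm:gamma} and the non-freeness criterion in Lemma~\ref{lemma:nf:1}.

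First I would verify that $\sigma_N := \Gamma(b_N, d_N)$ and $\tau_N := \Gamma(B_N, D_N)$ are both symmetric permutations on $[M_N]^2$ in the sense of Section~\ref{section:vV}. This is immediate from the explicit description in equation~(\ref{gamma:1}): writing $(i,j) = ((\alpha_1-1)d + \beta_1, (\alpha_2-1)d + \beta_2)$, one computes that both $t \circ \Gamma(b,d)(i,j)$ and $\Gamma(b,d) \circ t(i,j)$ equal $((\alpha_2-1)d + \beta_1, (\alpha_1-1)d + \beta_2)$, so $\Gamma(b,d)$ commutes with the matrix transpose $t$.

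Next, since $d_N \leq D_N$ by hypothesis, Theorem~\ref{thm:gamma} applies and gives
\[
\#\bigl\{(i,j)\in [M_N]^2 :\ \sigma_N(i,j) = \tau_N(i,j)\bigr\} = \mathfrak{c}(\sigma_N, \tau_N) \geq \frac{M_N^2}{L_N^2}.
\]
By the boundedness hypothesis, there is a constant $C$ with $L_N \leq C$ for every $N$; and since $\{M_N\}_N$ is a strictly increasing sequence of positive integers, $M_N \geq N$. Combining these estimates,
\[
\liminf_{N \to \infty} \frac{\mathfrak{c}(\sigma_N, \tau_N)}{N^2} \geq \liminf_{N \to \infty} \frac{M_N^2}{C^2 N^2} \geq \frac{1}{C^2} > 0.
\]

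Finally I would invoke Lemma~\ref{lemma:nf:1} with these $\sigma_N$ and $\tau_N$, which yields the desired conclusion that $W_N^{\Gamma(b_N, d_N)}$ and $W_N^{\Gamma(B_N, D_N)}$ are not asymptotically free. The proof is essentially a two-step reduction, and no step presents a genuine obstacle: the combinatorial heavy lifting was already carried out in Theorem~\ref{thm:gamma}, and the only small checkpoint is the symmetry of the partial transpose permutations.
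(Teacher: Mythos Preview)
Your proposal is correct and follows essentially the same route as the paper: apply the lower bound from Theorem~\ref{thm:gamma} to force the coincidence ratio to stay bounded away from zero, then invoke Lemma~\ref{lemma:nf:1}. Your write-up is in fact slightly more careful than the paper's (you explicitly verify symmetry of $\Gamma(b,d)$ and navigate the $N^2$ versus $M_N^2$ normalization via $M_N \geq N$), but the argument is the same two-step reduction.
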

\begin{proof}
Theorem \ref{thm:gamma} gives that
\[
\liminf_{N \rightarrow \infty}
\frac{\# \big\{ 
(i, j) :\
\Gamma(b_N, d_N)(i, j)
= \Gamma(B_N, D_N)(j, i)
\big\}}{M_N^2}
\geq
\liminf_{N \rightarrow \infty}
\frac{1}{L_N^2} > 0,
\]
and Lemma \ref{lemma:nf:1} implies the conclusion.
\end{proof}
A particular case of Corollary \ref{cor:nf:1}
is given below.
\begin{cor}\label{cor:nf:2}
With the notations from Corollary \ref{cor:nf:1}, if either both sequences 
$ \big\{ b_N \big\}_N $ and 
$ \big\{ B_N \big\}_N $
or both sequences
$ \big\{ d_N \big\}_N $
and 
$ \big\{ D_N \big\}_N $
are bounded, then
$  W_N^{\Gamma(b_N, d_N)} $
and
$ W_N^{\Gamma(B_N, D_N)} $
are \emph{not} asymptotically free.
\end{cor}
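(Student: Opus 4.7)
The plan is to reduce Corollary \ref{cor:nf:2} directly to Corollary \ref{cor:nf:1} by showing that in either of the two hypothesized cases, the associated sequence $\{L_N\}$ (the quotient $Q_N/d_N$ of the least common multiple by $d_N$) is bounded. Since asymptotic freeness is symmetric in its two arguments, I would first note that we may assume without loss of generality that $d_N \leq D_N$ for every $N$ (swapping the two partial transposes otherwise), and this inequality forces $b_N \geq B_N$ via $b_N d_N = B_N D_N$.

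The case in which both $\{d_N\}$ and $\{D_N\}$ are bounded is immediate: since $Q_N = \mathrm{lcm}(d_N, D_N) \leq d_N D_N$ is then bounded, so is $L_N = Q_N / d_N \leq D_N$. The case where both $\{b_N\}$ and $\{B_N\}$ are bounded requires a short number-theoretic argument. Starting from $L_N = D_N / \gcd(d_N, D_N)$ and the identity $b_N d_N = B_N D_N$, I would write $b_N = g_N \beta_N$ and $B_N = g_N \beta_N'$ with $g_N = \gcd(b_N, B_N)$ and $\gcd(\beta_N, \beta_N') = 1$; the relation $\beta_N d_N = \beta_N' D_N$ then forces $d_N = \beta_N' k_N$ and $D_N = \beta_N k_N$ for some positive integer $k_N$, so $\gcd(d_N, D_N) = k_N$ and therefore
\[
L_N \;=\; \frac{D_N}{k_N} \;=\; \beta_N \;=\; \frac{b_N}{g_N} \;\leq\; b_N.
\]
Boundedness of $\{b_N\}$ thus yields boundedness of $\{L_N\}$.

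In both cases, Corollary \ref{cor:nf:1} applies and gives that $W_N^{\Gamma(b_N, d_N)}$ and $W_N^{\Gamma(B_N, D_N)}$ are not asymptotically free. There is no real obstacle here; the only slightly delicate step is the divisibility manipulation in the second case, which must be executed carefully because $d_N$ and $D_N$ need not themselves be bounded there (indeed one of them typically tends to infinity).
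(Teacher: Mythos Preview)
Your proof is correct and follows the same strategy as the paper: in each case you bound $\{L_N\}$ and then invoke Corollary~\ref{cor:nf:1}. The paper's own argument is terser --- it simply asserts $L_N \le D_N$ in the first case and (up to a typo, writing $B_N$ where $b_N$ is meant) $L_N \le b_N$ in the second.

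Your treatment of the second case is more elaborate than necessary. The divisibility decomposition via $g_N = \gcd(b_N,B_N)$ is correct, but one can reach $L_N \le b_N$ in one line: since $d_N \mid M_N$ and $D_N \mid M_N$, their least common multiple $Q_N$ divides $M_N$; writing $Q_N = d_N L_N$ and $M_N = d_N b_N$ gives $L_N \mid b_N$, hence $L_N \le b_N$. This is the argument the paper has in mind, and it avoids introducing the auxiliary quantities $\beta_N,\beta_N',k_N$. That said, your computation yields the sharper identity $L_N = b_N/\gcd(b_N,B_N)$, which is a nice bonus even if not needed here.
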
 
\begin{proof}
If condition (i) holds true, then 
$ \big\{ L_N \big\}_N $ 
is bounded above by 
$ \displaystyle
\lim_{N \rightarrow \infty} D_N $, 
which is finite.
If condition (ii) holds true, then
$ \big\{ L_N \big\}_N $ 
is bounded above by 
$ \displaystyle
\lim_{N \rightarrow \infty} B_N
$
which is finite.
\end{proof}

To prove the main result of this Section, Theorem \ref{thm:main}, we will utilize Lemmata
\ref{segment:1} and \ref{segment:2} below. To simply the notations in their statements and proofs, we will introduce some new notations.

First, identifying $ 2m + q $ to $ q $,
let 
$ \nu_1 \in \cP_2 ( 2m, 2)  $ 
be given by
$ \nu_1 (2 k ) = 2 k + 1 $
for $ k \in [ m ] $,
i.e.
\[ \nu_1 = \big( (1, 2m), (2, 3), (4, 5), \dots, 
(2m-2, 2m -1)
\big). \]
For 
$ m \geq 3 $,
let
$ \nu_2 \in \cP_2(2m, 2) $
be given by
$ \nu_2 ( 2k -1 ) = 2 k  + 2 $
for 
$ k \in [ m  ] $, 
i.e.
\[
\nu_2 = \big( (1, 4), (3, 6), 
\dots, (2m-3, 2m), (2m-1, 2)
\big).
\]

Next, denote by 
$ J ( m) $ the set 
\begin{align*}
J(m) = \{ 
( j_1, j_2, i_2, i_3, j_3, j_4, i_4, i_5, \dots,  i_{2m-1},  j_{2m-1},  & j_{2m})
:\
j_l \in [ P_N ],  i_k \in [ M _N ]\\
&  \textrm{ and } j_{2s -1} = j_{2s}, 
i_{2s} = i_{2s + 1}\}
\end{align*}
(i.e. the elements of 
$ \cJ $ 
are elements of 
$ \mathcal{I}(m) $
without the first and last component). 

Finally, if 
$ \ou = ( j_1, j_{-1}, i_{-1}, i_2, j_2,   j_{-2}, i_{-2},  \dots, i_{2m-1}, j_{2m-1}, j_{2m} )$  is an element from
$ J ( m ) $ 
and
$ a, b  \in [ N ] $,
denote 
\[
(a, \ou, b )  = ( a, j_1, j_{2}, i_{2}, i_3, j_3,    \dots, i_{2m-1}, j_{2m-1}, j_{2m}, b ) 
\in \mathcal{I}(m).
\]
With the notations from Section
\ref{section:vV} and from above  we have then the following results.
\begin{lemma}\label{segment:1}
Suppose that
$ \sigma = \Gamma(b, d) $ 
and
$ \os = ( \sigma, \sigma, \dots, \sigma)
\in 
\mathcal{S}([M]^2)^m $,
with
$ M = bd $.
For 
$ a, b \in [ M ] $
we have that
\begin{itemize}
\item[(i)] If
$ a \neq b $ 
then,   for every 
$ \ou \in \cJ $,
\[ v( \nu_1, \os, (a, \ou, b)) = 
v( \nu_2, \os, (a, \ou, b)) = 0 
\]
\item[(ii)] If
$ m $  is odd, then
\begin{align*}
\sum_{ \ou \in \cJ } v(\nu_1, \os, (a, \ou, a))=
\frac{P}{M} \cdot d^{1-m}\\
\sum_{ \ou \in \cJ } v(\nu_2, \os, (a, \ou, a))=
\frac{P}{M} \cdot b^{1-m}
\end{align*}
\item[(iii)] If $ m $ is even, then
\begin{align*}
\sum_{ \ou \in \cJ } v(\nu_1, \os, (a, \ou, a))=
\frac{P}{M} \cdot d^{2-m}\\
\sum_{ \ou \in \cJ } v(\nu_2, \os, (a, \ou, a))=\frac{P}{M} \cdot b^{2-m}
\end{align*}
\item[(iv)]
$\displaystyle
\lim_{N \rightarrow \infty}
\sum_{ \pi \in\{ \nu_1, \nu_2 \} 
}
\sum_{ \ou \in \cJ }
v(\pi, \os, (a, \ou, a)) = 
\lim_{N \rightarrow \infty}
\kappa_m(W^\sigma, W^\sigma, \dots, W^\sigma)
$
\end{itemize}
\end{lemma}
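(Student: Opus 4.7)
The plan is a direct coordinate computation in the $\Gamma(b,d)$ decomposition together with a combinatorial identification, via Corollary \ref{cor:segment:0}, of which pair partitions survive in the limiting $m$-th free cumulant. Writing each $i_k \in [M]$ uniquely as $i_k = (\alpha_k - 1)d + \beta_k$ with $\alpha_k \in [b]$ and $\beta_k \in [d]$, formula~(\ref{gamma:1}) gives
\[
l_k = (\alpha_k - 1)d + \beta_{k+1}, \qquad l_{-k} = (\alpha_{k+1} - 1)d + \beta_k,
\]
where $(\alpha_{m+1}, \beta_{m+1})$ denote the coordinates of $b$. By uniqueness of this decomposition, each equality $l_t = l_{-s}$ forced by a non-vanishing Gaussian covariance factor in $v$ splits into a separate $\alpha$-equality and a separate $\beta$-equality.

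For (i), applying this to $\pi = \nu_1$, the internal pairs $(2k, 2k+1)$ for $k = 1, \ldots, m-1$ yield $\beta_{k+2} = \beta_k$ (after using the $\cJ$-constraint $i_{-k} = i_{k+1}$), while the wrap pair $(1, 2m)$ yields $\alpha_{m+1} = \alpha_1$ together with $\beta_m = \beta_2$. A brief case split on the parity of $m$ shows that $\beta_{m+1} = \beta_1$ in both cases, hence $b = a$; the $\nu_2$ analysis is parallel with the roles of $\alpha$ and $\beta$ interchanged.

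Once $a = b$ is imposed, every nonzero covariance factor equals $1/M$, so $v = M^{-m}$, reducing (ii) and (iii) to counting $\ou \in \cJ$ satisfying the constraints. For $\nu_1$, the $j$-equalities collapse to a single common value ($P$ choices), $\alpha_2, \ldots, \alpha_m$ are free ($b^{m-1}$ choices), and the recursion $\beta_{k+2} = \beta_k$ together with the closing condition $\beta_m = \beta_2$ leaves exactly one free $\beta$-value when $m$ is even and none when $m$ is odd. Multiplying these counts by $M^{-m}$ and using $M = bd$ produces the stated $(P/M) d^{1-m}$ and $(P/M) d^{2-m}$; the $\nu_2$ sums follow by interchanging $b$ and $d$.

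The main obstacle is part (iv). Starting from the Wick expansion~(\ref{eq:wick}), one groups the surviving terms according to the partition on $[m]$ induced by $\pi \vee \delta$; by Corollary \ref{cor:noncrossing} only non-crossing partitions contribute in the limit. The free moment-cumulant formula (inverted in the standard way) then identifies $\lim_N \kappa_m(W^\sigma, \ldots, W^\sigma)$ with $\lim_N \sum_{\pi : \pi \vee \delta = 1_{[2m]}} \mathcal{V}(\pi, \os)$. Applying Corollary \ref{cor:segment:0} to the single segment $S = [2m]$ with $k = 1$, $p = m$ shows that the only admissible $\pi$'s of type (i) or (ii) are $\nu_1$ and $\nu_2$. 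Finally, the identity $\mathcal{V}(\pi, \os) = \sum_{\ou \in \cJ} v(\pi, \os, (a, \ou, a))$ for any fixed $a \in [M]$ follows by reparametrizing $\mathcal{I}(m)$ as $[M] \times \cJ$ via $i_1 = a = i_{-m}$ and observing that the inner sum does not depend on $a$; this completes (iv).
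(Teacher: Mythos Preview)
Your proof is correct and follows essentially the same approach as the paper: the coordinate decomposition $i_k=(\alpha_k-1)d+\beta_k$ followed by constraint-counting for parts (i)--(iii), the only difference being that the paper reindexes to $2m$ coordinates and phrases the constraints as ``constant on the cycles of'' the joins $\nu_1\vee\theta$, $\nu_1\vee\nu_0$, $\nu_2\vee\nu_0$, whereas you work with the reduced $(m+1)$-indexing and read off the recursions $\beta_{k+2}=\beta_k$, $\alpha_{k+2}=\alpha_k$ directly. Part (iv) is not actually argued in the paper's proof of this lemma but is deferred to Lemma~\ref{segment:2}, whose proof is precisely your reduction via Corollaries~\ref{cor:noncrossing} and~\ref{cor:segment:0} together with the $a$-independence of the inner sum established in (ii)--(iii).
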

\begin{proof}
Let 
$ \ou \in \cJ $, given by
$ \ou = ( j_1, j_2, i_2, i_3, j_3, j_4, i_4, i_5, \dots, i_{2m-1}, j_{2m-1},\ab j_{2m}) .
$
As before, we can identify each 
$ i_k $
to a pair
$(\alpha_k, \beta_k) \in [b] \times [d] $
via
\[
i_k = (\alpha_k -1) d + \beta_k .
\]
Also, put
\begin{align*}
a &= (\alpha_1 -1) d + \beta_1\\
b & = (\alpha_{2m} - 1) d + \beta_{2m}.
\end{align*}

Denote
\begin{align*}
\oj  & = (j_1, j_2, \dots, j_{2m})\\
\vec{\alpha} &
= ( \alpha_1, \alpha_2, \dots, \alpha_{2m-1}, \alpha_{2m})\\
\vec{\beta} &
= ( \beta_1, \beta_2, \dots, \beta_{2m-1}, \beta_{2m}).
\end{align*}
The condition
$ \ou \in \cJ $
is equivalent to
\begin{equation}
\label{eq:segment:1}
\left\{
\begin{array}{l }
\textrm{
$ \oj $ 
is constant on the cycles of  
}\\
\hspace{3cm}
\theta = \big( (1, 2), (3, 4),
\dots, (2m-1, 2m) \big) 
\\
\textrm{
$ \vec{\alpha} $
and
$ \vec{\beta} $
are constant on the cycles of
}\\
\hspace{3cm}
\nu_0 = \big(  (1), (2, 3), (4, 5),
\dots, (2m-2, 2m-1),  (2m) \big). 
\end{array}
\right.
\end{equation}
The condition
$ v( \nu_1, \ou, (a, \ou, b) ) \neq 0  $
is equivalent to
\begin{equation}
\label{eq:segment:2}
\left\{
\begin{array}{l }
\textrm{
$ \oj $ 
and 
$ \vec{\alpha} $
are constant on the cycles of 
$ \nu_1 $
}\\
\textrm{
$ \vec{\beta} $
is constant on the cycles of
$ \nu_2 $.
}\\
\end{array}
\right.
\end{equation}

Hence, a set of necessary and sufficient condition for both
$ \ou \in \cJ $
and
$ v( \nu_1, \ou, (a, \ou, b) ) \neq 0  $
is
\begin{equation}
\label{eq:segment:3}
\left\{
\begin{array}{l }
\textrm{
$ \oj $ 
is constant on the cycles of
$ \nu_1 \vee \theta $}\\
\textrm{
$ \vec{\alpha} $
is constant on the cycles of 
$ \nu_1 \vee \nu_0 $
}\\
\textrm{
$ \vec{\beta} $
is constant on the cycles of
$ \nu_2 \vee \nu_0 $.
}\\ 
\end{array}
\right.
\end{equation}
But
$ \nu_1 \vee \theta  = \mathds{1}_{2m} $
and
$ \nu_1 \vee \nu_0  = \nu_1 $.
Moreover, if
$ m $ is odd, 
then 
$ \nu_2 \vee \nu_0 = \mathds{1}_{2m} $;
if
$ m $ is even, 
then
$ \nu_2 \vee \nu_0 $ 
has 2 cycles, 
one containing 
$ \{ 
4k, 4k + 1:\ k \in [ \frac{m}{2}]
\} $
and one containing
$ \{
4k + 2, 4k + 3:\ k \in [ \frac{m}{2}]
\} $. 

If 
$ m $ is even,
(\ref{eq:segment:3})
is therefore equivalent to
\begin{equation}
\label{eq:segment:4:even}
\left\{
\begin{array}{l }
\textrm{
$ \oj $ 
is constant, i.e. 
$ j_1 = j_2 = \dots = j_{2m} $ } \\
\textrm{
$ \alpha_1 = \alpha_{2m} $ 
and 
$ \alpha_{2p } = \alpha_{2p + 1} $
for 
$ p \in [ m-1] $
}\\
\textrm{
$ \beta_1 =  \beta_{4k+ 1} = \beta_{4k} $
and
$ \beta_2 = \beta_{4k + 2} = \beta_{4k+3}$
for
$ k \in [ \frac{m}{2} ] $.}
\end{array}
\right.
\end{equation}
In particular, since 
$ 2m = 4 \cdot \frac{m}{2} $,
the relations from 
(\ref{eq:segment:4:even})
give that
$ \alpha_1 = \alpha_{2m} $ 
and
$ \beta_1 = \beta_{2m} $, 
i. e. 
$ a = b $.
Moreover, 
\begin{align*}
\sum_{\ou \in \cJ} &
v(\nu_1, \os,  (a, \ou, a))
= 
\frac{1}{M^{m}}
\# \big\{ \ou :\ 
\ou \textrm{satisfies (\ref{eq:segment:4:even})}
\big\}\\
= &
\frac{1}{M^{m}}
\# \big\{ (j,  \beta_2, \alpha_2, \alpha_4, \dots, \alpha_{2m-2})
:\
j \in [ P ],  \beta_2 \in [ d ] , 
\alpha_{2t} \in [ b ] , t \in [ m-1]
\big\}\\
=  & 
\frac{1}{M^{m}} \cdot
P b^{m-1} d
=\frac{P}{M}d^{2-m}
\end{align*}

If 
$ m $ is odd,
(\ref{eq:segment:3})
is therefore equivalent to
\begin{equation}
\label{eq:segment:4:odd}
\left\{
\begin{array}{l }
\textrm{
$ \oj $ 
is constant} \\
\textrm{
$ \alpha_1 = \alpha_{2m} $ 
and 
$ \alpha_{2p } = \alpha_{2p + 1} $
for 
$ p \in [ m-1] $
}\\
\textrm{
$ \vec{\beta} $ is constant, i.e.
$ \beta_1 =  \beta_{2} =  \cdots =\beta_{2m} $.
}
\end{array}
\right.
\end{equation}
As above, 
(\ref{eq:segment:4:odd}) 
implies that
$ \alpha_1 = \alpha_{2m} $
and 
$ \beta_1 = \beta_{2m} $, 
i.e.
$ a = b $. 
Also,   
\begin{align*}
\sum_{\ou \in \cJ}
v( & \nu_1, \sigma,  (a, \ou, a))
= 
\frac{1}{M^{m}}
\# \big\{ \ou :\ 
\ou \textrm{\ satisfies (\ref{eq:segment:4:odd})}
\big\}\\
= &
\frac{1}{M^{m}}
\# \big\{ (j,  \alpha_2, \alpha_4, \dots, \alpha_{2m-2})
:\
j \in [ P ], 
\alpha_{2t} \in [ b ] , t \in [ m-1]
\big\}\\
=  & 
\frac{1}{M^{m}} \cdot P
b^{m-1} 
=
\frac{P}{M} \cdot d^{1-m}.
\end{align*}

On the other hand, the condition
$ v( \nu_2, \ou, (a, \ou, b) ) \neq 0  $
is equivalent to 
\begin{equation}
\label{eq:segment:5}
\left\{
\begin{array}{l }
\textrm{
$ \oj $ 
and 
$ \vec{\beta} $
are constant on the cycles of 
$ \nu_1 $
}\\
\textrm{
$ \vec{\alpha} $        
is constant on the cycles of
$ \nu_2 $.
}\\ 
\end{array}
\right.
\end{equation}
Hence the argument from above gives that
$ v( \nu_1, \ou, (a, \ou, b) ) = 0  $          
unless
$ a = b $
and  
\[ 
\sum_{\ou \in J(N, m)}
v(  \nu_2, \sigma,  (a, \ou, a))
= 
\left\{
\begin{array}{l l}
\displaystyle  \frac{P}{M} \cdot b^{2-m} &
\textrm{ if 
$ m $ is even }\\
& \\
\displaystyle
\frac{P}{M} \cdot b^{1-m} &
\textrm{if $ m $ is odd }.
\end{array}
\right.
\] 
\end{proof}

\begin{lemma}
\label{segment:2}
Suppose that
$ \big\{ b_N \big\}_N $,
$ \big\{ d_N \big\}_N $
and
$ \big\{ M_N \big\}_N $
are sequences of positive integers 
such that
$ M_N = b_N \cdot D_N $
and 
$ \big\{ M_N \big\}_N $
is an increasing sequence of positive integers. 
Let 
$ W_N $ 
be a 
$ M_N \times M_N $ 
Wishart random matrix 
and
$ \sigma_N = \Gamma( b_N, d_N ) $.
Then, for any 
$ a \in [ M_N ] $, we have that
\begin{equation}\tag{$\mathfrak{c}.4$}
\lim_{N \rightarrow \infty}
\kappa_m(W_N^{\sigma_N},
W_N^{\sigma_N}, \dots, W_N^{\sigma_N})
=
\lim_{N \rightarrow \infty}
\sum_{ \pi \in\{ \nu_1, \nu_2 \} 
}
\sum_{ \ou \in \cJ }
v(\pi, \os{_N},
(a, \ou, a)) 
\end{equation}
where
$ \vec{\sigma}_N  = ( \sigma_N, \sigma_N, \dots, \sigma_N )$.

In particular, 
$ W_N^{\Gamma(b_N, d_N)} $
has limit distribution if and only if
both
$ \displaystyle
\lim_{N \rightarrow \infty} b_N $
and
$ \displaystyle
\lim_{N \rightarrow \infty} d_N $
exist.
\end{lemma}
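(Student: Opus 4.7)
The strategy is to start from Wick's formula (\ref{eq:wick}), identify the pair partitions that contribute asymptotically, and then match them to the free cumulant via the standard moment-to-cumulant Möbius inversion. For each $\pi \in \cP_2(2m, 2)$ let $\hat\pi$ denote the partition of $[m]$ obtained by quotienting $\pi \vee \delta$ by the blocks of $\delta$. By Corollary \ref{cor:noncrossing}, pairings with $\hat\pi$ crossing contribute $o(1)$ to each moment and may be discarded. For the remaining $\pi$, the usual Wishart-type argument (in the spirit of Nica--Speicher) shows that $\mathcal{V}(\pi, \os_N)$ factorizes asymptotically along the blocks of $\hat\pi$, so that Möbius inversion
\[
\kappa_m = \sum_{\tau \in NC(m)} \mu(\tau, 1_m)\, m_\tau
\]
collapses to
\[
\lim_{N \to \infty} \kappa_m(W_N^{\sigma_N}, \ldots, W_N^{\sigma_N}) = \lim_{N \to \infty} \sum_{\substack{\pi \in \cP_2(2m, 2) \\ \pi \vee \delta = \mathds{1}_{2m}}} \mathcal{V}(\pi, \os_N),
\]
i.e., only the ``connected'' pairings survive.

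When $\pi \vee \delta = \mathds{1}_{2m}$, the set $S = [2m]$ is itself a segment of $\pi \vee \delta$, so Corollary \ref{cor:segment:0} applied to $S$ forces $\pi$ to be of type (i) or (ii) of that corollary, which are exactly $\nu_1$ and $\nu_2$. Hence the above sum is $\mathcal{V}(\nu_1, \os_N) + \mathcal{V}(\nu_2, \os_N)$. To recast this in the form $(\mathfrak{c}.4)$, expand
\[
\mathcal{V}(\pi, \os_N) = \frac{1}{M_N} \sum_{\ou \in \mathcal{I}(m)} v(\pi, \os_N, \ou)
\]
and split the sum according to the value of $i_1 = a \in [M_N]$. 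Part (i) of Lemma \ref{segment:1} shows the inner sum vanishes unless the endpoints agree, and parts (ii)--(iii) give an explicit closed form that is manifestly independent of $a$. Summing over $a$ produces a factor $M_N$ which cancels the prefactor $M_N^{-1}$, yielding $(\mathfrak{c}.4)$ for any fixed $a$.

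For the ``in particular'' claim, substitute the explicit values from parts (ii)--(iii) of Lemma \ref{segment:1}: using $P_N/M_N \to c$, the limit of $\kappa_m$ takes the form
\[
c \cdot \lim_{N \to \infty} \bigl( b_N^{\epsilon_m - m} + d_N^{\epsilon_m - m} \bigr),
\]
with $\epsilon_m = 1$ for odd $m$ and $\epsilon_m = 2$ for even $m$. Existence of a limit distribution is equivalent to the existence of $\lim \kappa_m$ for all $m$ (the $\kappa_m$ are bounded by those of a Marchenko--Pastur-type compactly supported law, so determinacy is not an issue); since the sequences $b_N^{-k} + d_N^{-k}$ are power sums in two variables, their simultaneous convergence for all $k$ is, by a symmetric-function argument, equivalent to the convergence in $\{1, 2, \ldots\} \cup \{\infty\}$ of the multiset $\{b_N, d_N\}$, which under $b_N d_N = M_N \to \infty$ forces each of $\lim b_N$ and $\lim d_N$ to exist separately.

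The main obstacle is making Step~1 rigorous, i.e., justifying the asymptotic block-factorization of $\mathcal{V}(\pi, \os_N)$ along the blocks of $\hat\pi$. This requires showing that within each block of $\pi \vee \delta$ the Ginibre-index and partial-transpose-index constraints decouple from those in other blocks up to $o(1)$, an argument that parallels the standard Nica--Speicher treatment of Wishart cumulants but must be checked in detail for the partial-transpose permutations $\sigma_N = \Gamma(b_N, d_N)$, relying on Lemma~\ref{lemma:5} to control the cross-block interactions.
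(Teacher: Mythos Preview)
Your approach matches the paper's: reduce to connected pairings via Corollary~\ref{cor:noncrossing}, identify these as $\nu_1,\nu_2$ via Corollary~\ref{cor:segment:0}, then use Lemma~\ref{segment:1} to remove the $a$-dependence and read off the explicit values. The paper's own proof is in fact terser than yours---it simply asserts the identity $\lim\kappa_m=\lim\sum_{\pi\vee\delta=\mathds{1}_{2m}}\mathcal{V}(\pi,\os)$ as following from ``the definition of free cumulants,'' without spelling out the block-factorization you rightly flag. One correction: the mechanism that makes the factorization work is not Lemma~\ref{lemma:5} but Lemma~\ref{segment:1}(i), the vanishing of $v(\nu_j,\os,(a,\ou,b))$ for $a\neq b$; this is exactly what forces the endpoints of a segment block to agree and hence decouples it from the rest (the paper runs this induction in full generality in the proof of property~(a.2) inside Lemma~\ref{lemma:split}, and the same argument specialized to a single $\sigma$ supplies the missing step here).
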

\begin{proof}
For the first part, note  first that the definition of free cumulants give that

\[
\lim_{N \rightarrow \infty}
\kappa_m ( W_N^{\sigma_N}, \dots, W_N^{\sigma_N})
= 
\lim_{N \rightarrow \infty}
\sum_{\substack{ \pi \in \cP_2 ( m, 2) \\
\pi\vee \delta = \mathds{1}_{2m}}}
\sum_{\ou \in \mathcal{I}(m)}
\frac{1}{M_N}
v( \pi, \os, \ou) .
\]  
From Corollary \ref{cor:segment:0} , we have that
\[
\big\{ \pi \in \cP_2(m, 2):\ \pi \vee \delta = \mathds{1}_{2m} 
\textrm{ and } 
\lim_{N \rightarrow \infty}
\mathcal{V}(\pi, \os) \neq 0
\big\}
= \big\{ \nu_1, \nu_2 \big\}.
\]
So the definition of
$ \cJ $ 
gives that
\[
\lim_{N \rightarrow \infty}
\kappa_m ( W_N^{\sigma_N}, \dots, W_N^{\sigma_N})
= 
\lim_{N \rightarrow \infty}
\sum_{\pi \in \{ \nu_1, \nu_2 \}}
\frac{1}{M_N}
\sum_{a \in [ N ]} 
\sum_{\ou \in \cJ} 
v( \pi, \os, ( a, \ou, a) )
\] 
and the conclusion follows since Lemma
\ref{segment:1} 
gives that, for
$ \pi \in \big\{ \nu_1, \nu_2 \big\} $,
the value of
$ \displaystyle
\sum_{\ou \in J(N, m)} 
v( \pi, \os, ( a, \ou, a) )
$
does not depend on $ a $.

The second part is an immediate consequence of the first part and of the results (ii) and (iii) in  Lemma \ref{segment:1}.
\end{proof} 
\begin{remark}\label{rem:3:5}
For every 
$ a \in [ N ]$,
with the notations from Lemmata \ref{segment:1} and \ref{segment:2},
if 
$ \pi \in \cP_2(2m, 2) $ 
and
$ \pi \vee \delta = \mathds{1}_{2m} $, 
then
\[
\sum_{\ou \in \cJ } 
v( \pi, \os, ( a, \ou, a) ) =
\mathcal{V}(\pi, \os).
\]
\end{remark}
\begin{proof}
Since
$
\mathcal{I}(m) = \{ (a, \ou, a): \  
a \in [ M_N ], \ou \in \cJ \}
$,
we have that
\[
\mathcal{V}(\pi, \os) =
\frac{1}{M_N}
\sum_{a\in[ M_N ]}
\sum_{ \ou \in \cJ }
v( \pi, \os, (a, \ou, a)).
\]
But
$ \displaystyle 
\sum_{\ou \in \cJ } 
v( \pi, \os, ( a, \ou, a) )
$
does not depend on $ a $,
so the conclusion follows.
\end{proof}

The main result of this Section is Theorem \ref{thm:main}. For convenience, its proof is split into two parts.
The first part, Lemma \ref{lemma:split} below, will be also used in the next Section.  

\begin{lemma} \label{lemma:split}
Suppose that
$ \big\{ M_N \big\} $ is a strictly increasing sequence of positive integers and that for each 
$ N $, 
$ W_N $ 
is a
$ M_N \times M_N $
Wishart random matrix 
and
$ \big\{
\tau_{k, N}:\  k =1,2, \dots, n
\big\} $
is a family of symmetric permutations from 
$ \mathcal{S}([ M_N ]^2) $ such that
\begin{itemize}
\item[(i)] the limit distribution of 
$ W_N^{ \tau_{k, N}} $ exists for every
$ k \in [ n ] $.
\item[(ii)] the families 
$\big\{ \tau_{k, N} \}_N $ 
satisfy the conditions
$ ( \mathfrak{c}.1 ) $, 
$ ( \mathfrak{c}.2 ) $,
$ ( \mathfrak{c}.3 ) $,
$ ( \mathfrak{c}.4 ) $
and the property from Lemma \ref{segment:1}$ ($i$)$.
\end{itemize}
Then the family 
$ \big\{ W_N^{ \tau_{k, N}}:\ 
k =1, 2, \dots, n
\big\} $
is asymptotically free.
\end{lemma}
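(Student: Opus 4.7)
My approach is to prove that every mixed free cumulant of the family $\big\{ W_N^{\tau_{k,N}}\big\}_{k \in [n]}$ vanishes in the limit; asymptotic free independence then follows from Speicher's characterization of freeness through the vanishing of mixed free cumulants. Hypothesis (i) secures the existence of the marginal limits for each $W_N^{\tau_{k,N}}$, while condition $(\mathfrak{c}.4)$ together with the diagonal property of Lemma \ref{segment:1}(i) ensures that every pure cumulant converges in the required combinatorial form. It therefore remains to show that $\lim_N \kappa_m(W_N^{\tau_{i_1,N}}, \ldots, W_N^{\tau_{i_m,N}}) = 0$ whenever $(i_1, \ldots, i_m) \in [n]^m$ is not constant.

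I would first expand the cumulant using Wick's formula~(\ref{eq:wick}) and apply moment-cumulant inversion blockwise with respect to $\pi \vee \delta$. By Corollary \ref{cor:noncrossing}, only those $\pi$ with $\pi \vee \delta$ non-crossing survive in the limit, and standard M\"obius inversion on the lattice of non-crossing partitions identifies the free cumulant with the ``connected'' part: asymptotically
\[
\kappa_m\bigl(W_N^{\tau_{i_1,N}}, \ldots, W_N^{\tau_{i_m,N}}\bigr)
= \sum_{\substack{\pi \in \cP_2(2m,2) \\ \pi \vee \delta = \mathds{1}_{2m}}}
\mathcal{V}(\pi, \os) + o(1),
\]
where $\os = (\tau_{i_1,N}, \ldots, \tau_{i_m,N})$.

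Since the unique segment of $\mathds{1}_{2m}$ is all of $[2m]$, Corollary \ref{cor:segment:0} forces $\pi \in \{\nu_1, \nu_2\}$. Non-constancy of the index tuple gives (cyclically) some $k$ with $i_k \neq i_{k+1}$. For $m \geq 3$, Lemma \ref{lemma:1c1}(i) applied at this $k$ together with condition $(\mathfrak{c}.2)$ between $\tau_{i_k,N}$ and $\tau_{i_{k+1},N}$ yields $\mathcal{V}(\nu_1, \os) \to 0$, while Lemma \ref{lemma:1c1}(ii) with $(\mathfrak{c}.3)$ yields $\mathcal{V}(\nu_2, \os) \to 0$. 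For the boundary case $m = 2$, the sole ``connected'' pairing $\pi = \bigl((1,4),(2,3)\bigr)$ satisfies both $\pi(2k-1) = 2k+2$ and $\pi(2k) = 2k+1$ at $k=1$, so Lemma \ref{lemma:2c2} combined with condition $(\mathfrak{c}.1)$ yields the conclusion instead.

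The main obstacle is the moment-cumulant inversion step: one must argue carefully that when $\pi \vee \delta = \tau \neq \mathds{1}_{2m}$ the quantity $\mathcal{V}(\pi, \os)$ factors asymptotically as a product of contributions along the blocks of $\tau$, so that after M\"obius inversion only the $\pi$ with $\pi \vee \delta = \mathds{1}_{2m}$ contribute to $\kappa_m$. This factorization follows from the block structure of $v(\pi, \os, \ou)$ combined with Lemma \ref{segment:1}(i), which decouples the index contributions across the segments of $\pi \vee \delta$. Once this reduction is in place, the vanishing of mixed cumulants is an immediate consequence of the segment-classification results from Section \ref{section:vV}.
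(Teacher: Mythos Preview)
Your proposal is correct and follows essentially the same route as the paper: reduce to pairings with $\pi\vee\delta=\mathds{1}_{2m}$, invoke Corollary~\ref{cor:segment:0} to pin down $\pi\in\{\nu_1,\nu_2\}$, and then kill the mixed contributions via Lemmata~\ref{lemma:2c2} and~\ref{lemma:1c1} using $(\mathfrak{c}.1)$--$(\mathfrak{c}.3)$. The factorization you flag as the ``main obstacle'' is precisely what the paper isolates as property~(a.2) and proves by induction on the blocks of $\gamma$, peeling off one segment at a time and using Lemma~\ref{segment:1}(i) together with Remark~\ref{rem:3:5} to split $\mathcal{V}(\pi,\os)$ exactly (not merely asymptotically) into a product over blocks.
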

\begin{proof}
We shall show  that all mixed free cumulants in
$ \big\{
W_N^{ \tau_{k, N}}:\
k = 1, 2, \dots, n
\big\}$ 
do vanish asymptotically. 
As before, to simplify the notations, without danger of confusion, we shall omit the index $ N $.    

Let
$ m $
be a positive integer. 
For 
$ \gamma \in NC(m ) $,
define
$ \widehat{\gamma} \in NC(2m) $ 
as follows. If
$ ( t_1, t_2, \dots, t_q ) $ 
is a block of 
$ \gamma $, 
then
$(2t_1 -1, 2t_1, 2t_2-1, 2 t_2, \dots, 
2 t_q -1, 2 t_q  ) $
is a block of 
$ \widehat{\gamma} $.
Note that, by construction,  
$ \gamma \mapsto \widehat{\gamma} $
is a  bijection from
$ NC(m) $
to 
$ \{ \rho \in NC(2m):\  \rho \vee \delta = \rho \} $.

Let 
$ \os = (\sigma_1, \sigma_2, \dots, \sigma_m) $
with 
$ \sigma_1, \dots, \sigma_m \in 
\big\{
\tau_k  :\  k =1, 2, \dots, n 
\big\} $. 
As seen in Section \ref{section:vV}, 
\[
E \circ \tr 
\big(
W^{\sigma_1} \cdot W^{\sigma_2} \cdots
W^{\sigma_m}
\big) = 
\sum_{\pi \in \cP_2(2m, 2)} 
\mathcal{V}(\pi, \os).
\]
Since each
$ \Gamma ( b_i, d_i) $
is symmetric, Corollary \ref{cor:noncrossing} gives that
$
\displaystyle
\lim_{N \rightarrow \infty}
\mathcal{V}( \pi, \os) = 0 $
unless
$ \pi \vee \delta $ 
is non-crossing. Therefore
\begin{equation}\label{eq:thm:main:4}
\lim_{N \rightarrow \infty}
E \circ \tr 
\big(
W^{\sigma_1} \cdot W^{\sigma_2} \cdots
W^{\sigma_m}
\big) = 
\sum_{\gamma \in NC(m)}
\lim_{N \rightarrow\infty}
\sum_{
\substack{\pi \in \cP_2(2m, 2)\\
\pi \vee \delta = \widehat{\gamma}}
}
\mathcal{V}(\pi, \os).
\end{equation}

It suffices though to prove the following results:
\begin{itemize}
\item[(a.1)] If 
$ \pi \in \cP_2( 2m, 2) $ 
is such that
$ \displaystyle
\lim_{N \rightarrow \infty}
\mathcal{V}( \pi, \os) \neq 0 $
and
$ B = (l_1, l_2, \dots, l_q) $
is a block of 
$ \gamma $,
where
$ \widehat{\gamma} = \pi \vee \delta $,
then
$ \sigma_{l_1}= \sigma_{l_2}
= 
\dots= \sigma_{l_q}$.
\item[(a.2)] For every 
$ \gamma \in NC(m) $, 
we have that
\[
\lim_{N \rightarrow \infty}
\sum_{
\substack{ \pi \in \cP_2(2m, 2)\\
\pi \vee \delta = \widehat{\gamma}}
} 
\mathcal{V}( \pi, \os)
= \prod_{\substack{B = \textrm{block of } \gamma\\
B =(l_1, l_2 \dots, l_q )}}
\lim_{N \rightarrow \infty}
\kappa_{q} 
\big(
W^{\sigma_{l_1}}, W^{\sigma_{l_1}}, \dots,
W^{\sigma_{l_1}}
\big).
\]
\end{itemize}

If 
$ \gamma $ 
has a single block, then
$ \widehat{\gamma} = \mathds{1}_{2m} $.
If
$ m =1 $,
then (a.1) is trivial.
If 
$ m =2 $,
then
$ \pi = \{ (1, 4), (2, 3) \} $.
If
$ \sigma_1 \neq \sigma_2 $, then 
Lemma \ref{lemma:2c2}
imply that
$ \displaystyle
\lim_{N \rightarrow \infty}
\mathcal{V}(\pi, \os) = 0.
$          

If
$ m \geq 3 $,
then Corollary \ref{cor:segment:0}, gives that
$ \displaystyle
\lim_{N \rightarrow \infty}
\mathcal{V}(\pi, \os) = 0.
$
unless 
$ \pi \in \{ \nu_1, \nu_2 \} $.
Suppose first that
$ \pi = \nu_1 $.
In particular, for any two consecutive blocks
$(2t-1, 2t )$ 
and 
$ (2t +1, 2t+2) $
of
$\delta$,
we have that
$ \pi(2t) = 2 t + 1 $
and
$ \pi(2t -1) \neq 2 t + 2 $.
If 
$ \sigma_t \neq \sigma_{t+1} $,
Lemma \ref{lemma:1c1}(i) give that
$ \displaystyle
\lim_{N \rightarrow \infty}
\mathcal{V}(\pi, \os) = 0 $.
Similarly, if
$ \pi = \nu_2$,
then
$ \pi(2t -1) = 2t + 2 $
and
$ \pi(2t) \neq 2t+1 $.
In this case, 
Lemma \ref{lemma:1c1}(ii) give that
$ \displaystyle
\lim_{N \rightarrow \infty}
\mathcal{V}(\pi, \os) = 0 $.

So if 
$ \gamma = \mathds{1}_{2m} $.,
(a.1) is proved.
To show (a.2), note that, from  Corollary \ref{cor:segment:0},
\begin{align*}
\lim_{N \rightarrow \infty}
\sum_{
\substack{ \pi \in \cP_2(2m, 2)\\
\pi \vee \delta = \mathds{1}_{2m}}
}  &
\mathcal{V}( \pi, \os)
= 
\sum_{\pi \in \{ \nu_1, \nu_2\}}
\lim_{N \rightarrow \infty}
\mathcal{V}( \pi, \os)\\
= & 
\lim_{N \rightarrow \infty}
\sum_{\pi \in \{ \nu_1, \nu_2\}}
\frac{1}{ M_N }
\sum_{ \ou \in \cI }
v(\pi, \os, \ou)\\
= & 
\sum_{\pi \in \{ \nu_1, \nu_2\}} 
\lim_{N \rightarrow \infty}
\frac{1}{M_N}
\sum_{a \in [ M_N ] }
\sum_{ \ou \in \cJ }
v(\pi, \os, (a,\ou, a))    
\end{align*}       
and the result follows since the family
$\{ \tau_k:\  k = 1, 2, \dots, n \} $ 
satisfies property
$ ( \mathfrak{c}.4 ) $.    

For the induction step, fix
$ \gamma \in NC(m) $. 
As shown in \cite{nica-speicher},
$\gamma$ 
has at least one block which is a segment.  Without restricting the generality (via a circular permutation) we can suppose that
$B = (1, 2, \dots, q) $
is a block in 
$\gamma $, 
that is
$ \gamma = \mathds{1}_{q} \oplus \omega $.
Then
$ \widehat{\gamma} = \mathds{1}_{2q} \oplus
\widehat{\omega} $
and if
$ \pi \in \cP_2(2m, 2) $
is such that
\[
\pi \vee \delta = \widehat{\gamma} =
\mathds{1}_{2q} \oplus \widehat{\omega},
\]
then
$ \pi = \nu \oplus \rho $
for some
$ \nu \in \cP_2(2q, 2) $
and
$ \rho \in \cP_2(2(m-q), 2) $
such that
$ \nu \vee \delta = \mathds{1}_{2q} $
and
$ \rho \vee \delta = \widehat{\omega} $. 

As shown above, 
$ \nu \vee \delta = \mathds{1}_{2q} $
implies that
$ \sigma_t = \sigma_{t+1} $ 
for
$ t = 1, 2, \dots, q-1$,
and property (a.1) follows from the induction hypothesis.    

To show (a.2), note that, by construction, for each
$ \ou \in \mathcal{I}(m) $ 
there are some unique
$ a, b \in [ N ]$,
$ \vec{v} \in J( q) $
and
$ \vec{w} \in J( m-q) $
such that
$ \ou =
( a, \vec{v}, b, b, \vec{w}, a ) $.
Moreover, denoting
$ \vec{\sigma}^\prime
= ( \sigma_1, \sigma_2, \dots, \sigma_q)$
and
$ \vec{\sigma}^{\prime\prime}
= ( \sigma_{q+1}, \sigma_{q+2}, \dots, \sigma_m)$
we have that
\begin{align*}
v( \pi, \os, \ou) = &
v( \nu \oplus \rho,
\vec{\sigma}^\prime
\oplus
\vec{\sigma}^{\prime\prime}, 
(a, \vec{v}, b, b, \vec{w}, a )\\
= &
v(\nu,  \vec{\sigma}^\prime, (a, \vec{v}, b) )
\cdot
v( \rho,
\vec{\sigma}^{\prime\prime},
( b, \vec{w}, a )  
).
\end{align*}    
Hence, if
$ \ou \in \cI $
is such that
$ v( \pi, \os, \ou) \neq 0 $, 
then Lemma \ref{segment:1}(i) gives that
$ a = b $.
Denoting 
$ m -q = r $, 
we have then:   
\begin{align*}
\mathcal{V}(\pi, \os)
& =
\mathcal{V}(\nu \oplus \rho,
\vec{\sigma}^\prime \oplus \vec{\sigma}^{\prime\prime})\\
= &
\frac{1}{M_N}
\sum_{a\in[ M_N ]}
\sum_{\vec{v} \in 
J( q)}
\sum_{\vec{w} \in J( r)}  
v(\nu,  \vec{\sigma}^\prime, 
(a, \vec{v}, a) )
\cdot
v( \rho,
\vec{\sigma}^{\prime\prime},
( a, \vec{w}, a )  
)\\
=&
\frac{1}{M_N} 
\sum_{( a, \vec{w}, a ) 
\in \mathcal{I}( r ) }
v( \rho,
\vec{\sigma}^{\prime\prime},
( a, \vec{w}, a )) 
\cdot
\sum_{\vec{v} \in J( q )}
v(\nu,  \vec{\sigma}^\prime, 
(a, \vec{v}, a) ).
\end{align*} 
Therefore
\begin{align*}
\sum_{\substack{\pi \in \cP_2(2m, 2)\\
\pi \vee \delta = \widehat{\gamma}}}
\mathcal{V}(\pi, \os)
& =
\sum_{\substack{\rho \in \cP_2(2r, 2)\\
\rho \vee \delta = \widehat{\omega}}}
\sum_{\substack{\nu \in \cP_2(2q, 2)\\
\pi \vee \delta = \mathds{1}_{2q}}} 
\mathcal{V}(\nu \oplus \rho,
\vec{\sigma}^\prime \oplus \vec{\sigma}^{\prime\prime}) \\
=&
\sum_{\substack{\rho \in \cP_2(2r, 2)\\
\rho \vee \delta = \widehat{\omega}}}
\frac{1}{M_N} 
\sum_{( a, \vec{w}, a ) 
\in \mathcal{I}( r) }
v( \rho,
\vec{\sigma}^{\prime\prime},
( a, \vec{w}, a ))\\
& \hspace{3.6cm}
\cdot
\big[
\sum_{\substack{\nu \in \cP_2(2q, 2)\\
\pi \vee \delta = \mathds{1}_{2q}}}
\sum_{\vec{v} \in J( q)}
v(\nu,  \vec{\sigma}^\prime, 
(a, \vec{v}, a) )             
\big] .                      
\end{align*}     
But
\[
\frac{1}{M_N}  
\sum_{( a, \vec{w}, a ) 
\in \mathcal{I}( r) }
v( \rho,
\vec{\sigma}^{\prime\prime},
( a, \vec{w}, a ))=
\mathcal{V}( \rho, \vec{\sigma}^{\prime\prime} )
\]
hence, from Remark \ref{rem:3:5},
\[
\sum_{\substack{\pi \in \cP_2(2m, 2)\\
\pi \vee \delta = \widehat{\gamma}}}
\mathcal{V}(\pi, \os)
=
\big[ 
\sum_{\substack{\nu \in \cP_2(2q, 2)\\
\pi \vee \delta = \mathds{1}_{2q}}}
\mathcal{V}
( \nu, \vec{\sigma}^{\prime})
\big]
\cdot
\big[
\sum_{\substack{\rho \in \cP_2(2r, 2)\\
\rho \vee \delta = \widehat{\omega}}}
\mathcal{V}( \rho, \vec{\sigma}^{\prime\prime})                    \big]
\]
and property (a.2) follows by induction.
\end{proof}
\begin{thm}\label{thm:main}
Suppose that 
$ \{ M_N \}_N $
is a strictly increasing sequence of positive integers and that for each
$ i =1, 2, \dots, n $
there exist two sequences
$ \{ b_{i, N}\}_N $ 
and 
$ \{ d_{i, N}\}_N $
such that:
\begin{itemize}
	\item[(i)] $ b_{1, N}  \leq b_{2, N}  \leq \dots \leq b_{n, N} $
\item[(ii)] for each $ i =1, 2, \dots, n $, we have that $ b_{i, N} \cdot d_{i, N}= M_N $
\item[(iii)] for each $ i =1, 2, \dots, n $, the limits 
$ \displaystyle
\lim_{N \rightarrow \infty} 
b_{i, N} $,
$ \displaystyle
\lim_{N \rightarrow \infty} 
d_{i, N} $ exist. 
\end{itemize}

Then the family
$ \big\{ W_{N}^{\Gamma(b_{i, N}, d_{i, N})}:\
i = 1, 2, \dots, n
\big\}$
is almost surely asymptotically free
if and only if the following condition is satisfied:
\begin{itemize}
\item[($\mathfrak{w}.1$)] If
$ k < l $
and
$ Q_{k, l, N}= d_{k, N} \cdot L_{k, l, N} $
is
the least common multiple of
$ d_{k, N} $ and $ d_{l, N} $,
then
$ \displaystyle
\lim_{N \rightarrow \infty}
L_{k, l, N} = \infty.
$
\end{itemize}
\end{thm}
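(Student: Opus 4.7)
The plan is to prove the two implications separately. For the \emph{only if} direction, suppose that ($\mathfrak{w}.1$) fails, so that there exist indices $k < l$ and a subsequence along which $L_{k,l,N}$ remains bounded. Corollary \ref{cor:nf:1} applied to the pair $\Gamma(b_{k,N}, d_{k,N})$, $\Gamma(b_{l,N}, d_{l,N})$ then shows that these two partial transposes of $W_N$ are not asymptotically free, so neither is the full family.

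For the \emph{if} direction, the strategy is to apply Lemma \ref{lemma:split} to $\tau_{i,N} := \Gamma(b_{i,N}, d_{i,N})$. Condition (i) of that lemma (existence of the individual limit distributions) is immediate from hypothesis (iii) via Lemma \ref{segment:2}. Among condition (ii), the single-matrix properties ($\mathfrak{c}.4$) and the Lemma \ref{segment:1}(i) property hold automatically for each $\tau_{i,N}$ by those lemmas. The substantive task is to verify ($\mathfrak{c}.1$)--($\mathfrak{c}.3$) for every pair $(\tau_{i,N}, \tau_{j,N})$ with $i \neq j$. The essential preliminary observation is the arithmetic identity
\[
L_{k,l,N} = \frac{b_{k,N}}{\gcd(b_{k,N}, b_{l,N})} = \frac{d_{l,N}}{\gcd(d_{k,N}, d_{l,N})} \quad \text{for } k < l,
\]
valid because $b_{k,N} d_{k,N} = b_{l,N} d_{l,N} = M_N$ with $b_{k,N} \leq b_{l,N}$. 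This identity upgrades ($\mathfrak{w}.1$) to the two consequences that $b_{k,N} \to \infty$ for every $k$ (combining the first expression with hypothesis (i)) and $d_{l,N} \to \infty$ for every $l \geq 2$ (taking $k = 1$ in the second). Condition ($\mathfrak{c}.1$) then follows directly from Theorem \ref{thm:gamma}, whose bound of order $M_N^2/L_{k,l,N}$ is $o(M_N^2)$ by ($\mathfrak{w}.1$).

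The main technical step, and the main obstacle, is to reconcile the cross-form conditions ($\mathfrak{c}.2$), ($\mathfrak{c}.3$) with Theorem \ref{thm:2:3}, which counts only same-index configurations. The resolution will be to exploit the symmetry identity $\pi_1 \tau_j(x, y) = \pi_2 \tau_j(y, x)$ coming from $\tau_j \circ t = t \circ \tau_j$: in ($\mathfrak{c}.2$) for a pair $(\tau_i, \tau_j)$, this rewrites the condition $\pi_2 \tau_i(p, q) = \pi_1 \tau_j(q, r)$ into the Theorem \ref{thm:2:3}(i) form $\pi_2 \tau_i(p, q) = \pi_2 \tau_j(r, q)$, giving the $o(M_N^3)$ bound provided $d_{j,N} \to \infty$. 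Applying that theorem a second time with the roles of $\tau_i$ and $\tau_j$ swapped yields the bound when instead $d_{i,N} \to \infty$; since one of $i, j$ is at least $2$, one of these conditions always holds. Condition ($\mathfrak{c}.3$) is handled analogously via Theorem \ref{thm:2:3}(ii) and the divergence of all $b_{i,N}$. Once all hypotheses of Lemma \ref{lemma:split} are verified, it delivers asymptotic freeness in expectation, and the almost sure refinement follows from the covariance estimates of Section 5 via a standard Borel--Cantelli argument.
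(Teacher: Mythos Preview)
Your proposal is correct and follows essentially the same route as the paper: Corollary~\ref{cor:nf:1} for the forward direction, then Lemma~\ref{lemma:split} together with Theorems~\ref{thm:gamma} and~\ref{thm:2:3} for the converse, with Theorem~\ref{thm:52} supplying the almost sure upgrade. Your treatment is in fact cleaner in two places: the arithmetic identity $L_{k,l,N}=b_{k,N}/\gcd(b_{k,N},b_{l,N})=d_{l,N}/\gcd(d_{k,N},d_{l,N})$ gives the divergence of the $b_{k,N}$ and of the $d_{l,N}$ (for $l\geq 2$) more transparently than the paper's ad hoc inequalities, and you make explicit the symmetry step $\pi_1\tau_j(q,r)=\pi_2\tau_j(r,q)$ that converts conditions ($\mathfrak{c}.2$) and ($\mathfrak{c}.3$) into the form of Theorem~\ref{thm:2:3}, a step the paper leaves implicit when it writes ``i.e.\ the family \dots\ also satisfy conditions ($\mathfrak{c}.2$) and ($\mathfrak{c}.3$).''
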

\begin{proof}
The proof of the almost sure part of this Theorem depends on Theorem \ref{thm:52}. For the reader's convenience we defer the proof of Theorem \ref{thm:52} to Section \ref{sec:fluctuations}.

Suppose the family
$ \big\{ W_{N}^{\Gamma(b_{i, N}, d_{i, N})}:\
i = 1, 2, \dots, n
\big\}$
is almost surely asymptotically free then the family
$ \big\{ W_{N}^{\Gamma(b_{i, N}, d_{i, N})}:\
i = 1, 2, \dots, n
\big\}$
is asymptotically free and Corollary \ref{cor:nf:1} shows that
($\mathfrak{w}.1$) holds.

Suppose now that condition ($\mathfrak{w}.1$)
holds true.
Since all partial transposes are symmetric and since, according to 
Lemmata \ref{segment:1} and \ref{segment:2}, conditions ($\mathfrak{c}.3$) and
($\mathfrak{c}.4$)
are satisfied by partial transposes, we only need to show that 
($\mathfrak{w}.1$) implies that the family
$ \big\{ W_N^{\Gamma(b_{i, N}, d_{i, N})}:\
i = 1, 2, \dots, n
\big\}$
satisfies the conditions 
($\mathfrak{c}.1$), ($\mathfrak{c}.2$) 
and ($\mathfrak{c}.3$).

Let 
$ k <l $.
%
Theorem \ref{thm:gamma} and
($\mathfrak{w}.1$) give that
\begin{equation*}
\mathfrak{j}\big( \Gamma(b_{k, N}, d_{k, N}), 
\Gamma(b_{l, N}, d_{l, N}) \big)
\leq \frac{ M_N^2 }{L_{k, l, N }}
= o( M_N^2 ),
\end{equation*}
i.e. the condition 
( $\mathfrak{c}.1$)
is satisfied.

On the other hand, note that ($\mathfrak{w}.1$)
implies that
$ \displaystyle
\lim_{N \rightarrow \infty}
d_{l, N} = \infty 
$.
Indeed, if
\[
\lim_{N \rightarrow \infty} d_{k, N}
\leq 
\lim_{N \rightarrow \infty} d_{l, N}
< \infty
\] 
then 
$ \displaystyle 
\lim_{N \rightarrow \infty}
Q_{l, k, N} $
is the least common multiple of
$ \displaystyle 
\lim_{N \rightarrow \infty}
d_{k, N} $
and
$ \displaystyle 
\lim_{N \rightarrow \infty}
d_{l, N} $,
which is finite, hence so is
$ \displaystyle 
\lim_{N \rightarrow \infty}
L_{k, l, N} $. 
Condition
($ \mathfrak{w}.1$)
also implies that
$ \displaystyle
\lim_{N \rightarrow \infty }  
b_{k, N}   = \infty $, 
since
\[  b_{k, N} \cdot L_{k, l, N}
\leq M_N = b_{k, N} \cdot d_{k, N} \] 
so
$ \displaystyle
\lim_{N \rightarrow \infty} L_{k, l, N} 
\leq 
\displaystyle
\lim_{N \rightarrow \infty }  
b_{k, N}$.
Theorem \ref{thm:2:3} gives then
\begin{align*}
\# \big\{ (a, b, c):\
\pi_2 \circ \Gamma(b_{k, N}, d_{k, N}) 
(a, b) = 
\pi_2 \circ \Gamma(b_{l, N}, d_{l, N})
(c,b) 
\big\} = o(M_N^3)
\\
\# \big\{ (a, b, c) :\
\pi_1 \circ \Gamma(b_{k, N}, d_{k, N})
(a, b) = 
\pi_1 \circ \Gamma(b_{l, N}, d_{l, N})
(c, b ) 
\big\} = o( M_N^3 ),
\end{align*}           
i.e.  the family 
$ \big\{ \Gamma(b_{i, N}, d_{i, N}):\
i = 1, 2, \dots, n
\big\}$
also satisfy conditions 
($\mathfrak{c}.2$)
and
($\mathfrak{c}.3 $).  So by Lemma \ref{lemma:split} 
the family 
$ \big\{ W_N^{\Gamma(b_{i, N}, d_{i, N})}:\
i = 1, 2, \dots, n
\big\}$ is asymptotically free. In Theorem \ref{thm:52}
we will show that for all $i_1, \dots , i_m$ the mixed moments 
\[
\tr \big( 
   W_N(\omega)^{ \Gamma(b_{i_1, N}, d_{i_1, N})} 
    W_N(\omega)^{ \Gamma(b_{i_2, N}, d_{i_2, N})} 
      \cdots
       W_N(\omega)^{ \Gamma(b_{i_m, N}, d_{i_m, N}) } 
 \big)
\]
converge almost surely as $N \rightarrow \infty$. By what we have shown
above the limit to which they converge is the mixed moment of some free random variables.  Thus the family
$ \big\{ W_{N}^{\Gamma(b_{i, N}, d_{i, N})}:\
i = 1, 2, \dots, n
\big\}$
is almost surely asymptotically free.    
\end{proof}
Utilizing Theorem \ref{thm:gamma}, we can reformulate Theorem \ref{thm:main} in a more intuitive fashion, as shown below.
\begin{cor}\label{cor:49}
Suppose that 
$ \{ M_N \}_N $
is a increasing sequence of positive integers and that for each
$ i =1, 2, \dots, n $
there exist two sequences
$ \{ b_{i, N}\}_N $ 
and 
$ \{ d_{i, N}\}_N $
such that:
\begin{itemize}
\item[(i)] $ b_{i, N} \cdot d_{i, N} = M_N $
\item[(ii)] the limits 
$ \displaystyle
\lim_{N \rightarrow \infty} 
b_{i, N} $,
and 
$ \displaystyle
\lim_{N \rightarrow \infty} 
d_{i, N} $ 
exist.
\end{itemize}

Then the family
$ \big\{ W_N^{\Gamma(b_{i, N}, d_{i, N})}:\
i = 1, 2, \dots, n
\big\}$
is almost surely asymptotically free
if and only if whenever 
$ k \neq l $, we have that
\[
\lim_{N \rightarrow \infty}
\frac{1}{M_N^2}
\#\big\{ (i, j):\ 
\Gamma(b_{k, N}, d_{k, N})(i, j) = 
\Gamma( b_{l, N}, d_{l, N}) (i, j)
\big\} = 0. 
\]
\end{cor}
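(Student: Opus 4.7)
The plan is to deduce Corollary \ref{cor:49} by combining Theorem \ref{thm:main} with the sandwich estimate of Theorem \ref{thm:gamma}, converting the arithmetic condition ($\mathfrak{w}.1$) on least common multiples into the purely combinatorial condition on the number of common fixed points of two partial transpose permutations.

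I would first observe that both properties in the statement --- almost sure asymptotic freeness of the family, and the pairwise fixed-point condition $\mathfrak{c}/M_N^2 \to 0$ --- are invariant under permutation of the indices $\{1, 2, \dots, n\}$. Since the limits $\lim_N b_{i,N}$ exist in $\mathbb{N} \cup \{\infty\}$, the relative ordering of the sequences $\{b_{i,N}\}_N$ eventually stabilizes, so I may reorder the indices to ensure $b_{1,N} \leq b_{2,N} \leq \cdots \leq b_{n,N}$ for all sufficiently large $N$. Under this reordering, Theorem \ref{thm:main} applies and reduces the problem to proving, for each pair $k < l$, the equivalence
\[
L_{k,l,N} \to \infty \qquad \Longleftrightarrow \qquad \frac{1}{M_N^2}\,\mathfrak{c}\bigl(\Gamma(b_{k,N}, d_{k,N}),\, \Gamma(b_{l,N}, d_{l,N})\bigr) \to 0.
\]

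For this equivalence I would invoke Theorem \ref{thm:gamma}. Since $d_{l,N} \leq d_{k,N}$ whenever $k < l$, that theorem, applied with $d := d_{l,N}$ and $D := d_{k,N}$, yields the two-sided bound
\[
\frac{M_N^2}{L_{k,l,N}^2} \;\leq\; \mathfrak{c}\bigl(\Gamma(b_{k,N}, d_{k,N}),\, \Gamma(b_{l,N}, d_{l,N})\bigr) \;\leq\; \frac{M_N^2}{L_{k,l,N}},
\]
from which the desired equivalence is immediate. Since $\mathfrak{c}$ is symmetric in its two arguments, the pairwise condition in the reordered indexing coincides with the pairwise condition in the original indexing, completing the argument.

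The only genuine difficulty is bookkeeping: one must verify that the hypothesis $d \leq D$ of Theorem \ref{thm:gamma} is met uniformly for each pair along the sequence, and handle the degenerate case $d_{k,N} = d_{l,N}$ separately. Both concerns are resolved by the eventual stabilization of the relative order of the $b_{i,N}$, which is guaranteed by the existence of their limits in $\mathbb{N} \cup \{\infty\}$.
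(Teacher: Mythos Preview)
Your proof is correct and follows exactly the paper's approach: the paper presents Corollary~\ref{cor:49} as a direct reformulation of Theorem~\ref{thm:main} via the two-sided estimate of Theorem~\ref{thm:gamma}, without further argument, and your proposal supplies precisely those details. One minor imprecision worth tightening: the claim that the relative ordering of the $b_{i,N}$ ``eventually stabilizes'' can fail when two of the limits equal $+\infty$, but this is harmless since both the fixed-point condition and asymptotic freeness are symmetric in the indices, so one may argue along subsequences on which a fixed ordering holds.
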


We conclude this Section with the statements of two particular cases of Theorem \ref{thm:main}.
\begin{cor}
Suppose that the sequences
$ \{ b_{i, N}\}_N $ 
and 
$ \{ d_{i, N}\}_N $
satisfy the following conditions:
\begin{itemize}
\item[(i)]
$ b_{i, N} \cdot d_{i, N} = M_N $
for every
$ i =1, 2, \dots, n $
and the sequence
$ \{ M_N \}_N $ 
is strictly increasing.
\item[(ii)]
$ \displaystyle 
\lim_{N \rightarrow \infty}
b_{1, N} $ 
does exist and
$ \displaystyle
\lim_{N \rightarrow \infty }
\frac{b_{i, N}}{b_{i+1, N}} = \infty
$
for every 
$ i =1, 2, \dots, n-1 $.    
\end{itemize}
Then the family
$ \big\{ W_N^{\Gamma(b_{i, N}, d_{i, N})}:\
i = 1, 2, \dots, n
\big\}$
is almost surely asymptotically free.    
\end{cor}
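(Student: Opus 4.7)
The approach is to derive this corollary from Theorem~\ref{thm:main} by verifying its condition~($\mathfrak{w}.1$). Hypothesis~(ii) forces $b_{i,N}>b_{i+1,N}$ for all sufficiently large $N$ (since $b_{i,N}/b_{i+1,N}\to\infty$), so for such $N$ the tuple $(b_{i,N})_{i=1}^n$ is strictly decreasing in $i$. I would therefore reindex by setting $\tilde b_{i,N}=b_{n+1-i,N}$ and $\tilde d_{i,N}=d_{n+1-i,N}$ so that $\tilde b_{1,N}\leq\tilde b_{2,N}\leq\cdots\leq\tilde b_{n,N}$, matching hypothesis~(i) of Theorem~\ref{thm:main}; this only relabels the family $\{W_N^{\Gamma(b_{i,N},d_{i,N})}:i=1,\dots,n\}$ and hence does not affect asymptotic freeness.

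The key step is to check~($\mathfrak{w}.1$) after reindexing: for $k<l$, write $\tilde Q_{k,l,N}=\operatorname{lcm}(\tilde d_{k,N},\tilde d_{l,N})=\tilde d_{k,N}\cdot L_{k,l,N}$. Since the LCM is at least the larger of the two integers, $L_{k,l,N}\geq\tilde d_{l,N}/\tilde d_{k,N}$. Using $\tilde d_{i,N}=M_N/\tilde b_{i,N}$, this ratio equals $\tilde b_{k,N}/\tilde b_{l,N}=b_{n+1-k,N}/b_{n+1-l,N}$. Letting $k'=n+1-l$ and $l'=n+1-k$ (so $k'<l'$), this is the telescoping product
\[
\frac{b_{k',N}}{b_{l',N}}=\prod_{i=k'}^{l'-1}\frac{b_{i,N}}{b_{i+1,N}},
\]
each factor of which tends to $\infty$ by hypothesis~(ii). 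Hence $L_{k,l,N}\to\infty$, so~($\mathfrak{w}.1$) holds. Alternatively, one can feed this same bound directly into Corollary~\ref{cor:49}: Theorem~\ref{thm:gamma} gives $\mathfrak{c}(\Gamma(b_{k,N},d_{k,N}),\Gamma(b_{l,N},d_{l,N}))\leq M_N^2/L_{k,l,N}=o(M_N^2)$.

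The principal obstacle will be hypothesis~(iii) of Theorem~\ref{thm:main} (and the analogous requirement in Corollary~\ref{cor:49}), which demands that every $\lim_N b_{i,N}$ and $\lim_N d_{i,N}$ exist in $\mathbb N\cup\{\infty\}$; our hypothesis~(ii) only directly guarantees $\lim_N b_{1,N}$, and a priori the intermediate sequences could oscillate between distinct limit points in $\mathbb N\cup\{\infty\}$. I would address this by a standard subsequence/compactness argument: along any subsequence of $\mathbb N$, by compactness of $(\mathbb N\cup\{\infty\})^{2n}$ a further subsequence exists on which every $b_{i,N}$ and $d_{i,N}$ converges; on that further subsequence Theorem~\ref{thm:main} applies (the calculation above still delivers~($\mathfrak{w}.1$), and the other hypotheses are built in), yielding almost sure asymptotic freeness of the sub-sub-family. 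Since every subsequence admits such a further almost surely asymptotically free sub-subsequence, the original family is almost surely asymptotically free. The algebraic heart of the argument is the LCM lower bound combined with the telescoping product; the only delicate part is this last technicality about coordinate-wise limits.
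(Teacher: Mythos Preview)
Your overall approach matches the paper's (the corollary is stated there as a direct particular case of Theorem~\ref{thm:main}): verify condition~($\mathfrak{w}.1$), equivalently the vanishing condition of Corollary~\ref{cor:49}, via the lower bound $L\geq \max(d,D)/\min(d,D)$ and a telescoping product. There is, however, an indexing slip in your LCM estimate. After your reindexing you have $\tilde b_{1,N}\leq\cdots\leq\tilde b_{n,N}$, hence $\tilde d_{1,N}\geq\cdots\geq\tilde d_{n,N}$, so for $k<l$ the \emph{larger} of the two is $\tilde d_{k,N}$. Your bound $L_{k,l,N}\geq\tilde d_{l,N}/\tilde d_{k,N}$ is therefore at most $1$ and cannot give $L_{k,l,N}\to\infty$; indeed in the next line the substitution $b_{n+1-k,N}/b_{n+1-l,N}=b_{k',N}/b_{l',N}$ (with $k'=n+1-l$, $l'=n+1-k$) is inverted --- the ratio is actually $b_{l',N}/b_{k',N}\to 0$. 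The quantity one must control is $L=\operatorname{lcm}/\min(d_k,d_l)$, as in Theorem~\ref{thm:gamma} and Corollary~\ref{cor:nf:1}, and then $L\geq\max/\min$. In the original (unreindexed) variables, hypothesis~(ii) gives $d_{k,N}<d_{l,N}$ eventually for $k<l$, whence
\[
L_{k,l,N}\ \geq\ \frac{d_{l,N}}{d_{k,N}}\ =\ \frac{b_{k,N}}{b_{l,N}}\ =\ \prod_{i=k}^{l-1}\frac{b_{i,N}}{b_{i+1,N}}\ \longrightarrow\ \infty,
\]
which is exactly your telescoping product. (The ordering hypothesis~(i) in Theorem~\ref{thm:main} appears to be misstated --- compare Corollary~\ref{cor:4:15}(iii), where it is the $d_{i,N}$ that are ordered, consistent with Theorem~\ref{thm:gamma} --- and this is what led you astray.)

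Your concern about hypothesis~(iii) of Theorem~\ref{thm:main} is legitimate, but the subsequence argument you propose does not close the gap. Along different sub-subsequences the individual limit distributions of $W_N^{\Gamma(b_{i,N},d_{i,N})}$ may differ (Lemma~\ref{segment:2} shows they depend on $\lim_N b_{i,N}$ and $\lim_N d_{i,N}$), so the corresponding ``free'' mixed moments need not agree and the full sequence of mixed moments need not converge at all. Without existence of the individual limits the conclusion ``asymptotically free'' is not well posed; the corollary should be read with the tacit assumption that all $\lim_N b_{i,N}$ and $\lim_N d_{i,N}$ exist (the ratio hypothesis together with $b_{i+1,N}\geq 1$ already forces $b_{i,N}\to\infty$ for $i\leq n-1$, but says nothing about $b_{n,N}$ or about the $d_{i,N}$).
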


\begin{cor}
Suppose that, for 
$ i =1, 2, 3 $,
the sequences
$ \{ b_{i, N}\}_N $ 
and 
$ \{ d_{i, N}\}_N $
satisfy the following conditions:
\begin{itemize}
\item[(i)]
$ b_{i, N} \cdot d_{i, N} = M_N $
for every
$ i =1, 2, 3 $ and the sequence 
$ \{ M_N \}_N $ 
is strictly increasing.
\item[(ii)] the limits 
$ \displaystyle
\lim_{N \rightarrow \infty}
b_{1, N} $
and
$ \displaystyle
\lim_{N \rightarrow \infty}
d_{3, N} $ 
exist and are finite,
while  \\         
$ \displaystyle
\lim_{N \rightarrow \infty} b_{2, N}
=   
\lim_{N \rightarrow \infty} d_{2, N} = \infty $.                 
\end{itemize}
Then the family
$ \big\{
W_N^{\Gamma(b_{1, N}, d_{1, N})},
W_N^{\Gamma(b_{2, N}, d_{2, N})},
W_N^{\Gamma(b_{3, N}, d_{3, N})}
\big\}$
is almost surely asymptotically free. 
\end{cor}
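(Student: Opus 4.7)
The plan is to verify the hypotheses of Theorem \ref{thm:main}, so that the stated conclusion is an immediate application. First I would check that all four sequences $b_{i,N}, d_{i,N}$ have limits in $\{1,2,\ldots\} \cup \{\infty\}$: the sequences $b_{1,N}$ and $d_{3,N}$ have finite limits and $b_{2,N}, d_{2,N} \to \infty$ by assumption, while the constraint $b_{i,N} d_{i,N} = M_N \to \infty$ forces $d_{1,N} = M_N/b_{1,N} \to \infty$ and $b_{3,N} = M_N/d_{3,N} \to \infty$. Since asymptotic freeness of a family is unchanged under relabelling, I may also assume the ordering $b_{1,N} \leq b_{2,N} \leq b_{3,N}$ required by Theorem \ref{thm:main}: the first inequality holds for large $N$ since $b_{1,N}$ is bounded and $b_{2,N} \to \infty$, and the second because $b_{3,N}/b_{2,N} = d_{2,N}/d_{3,N} \to \infty$.

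The substantive step is to verify condition $(\mathfrak{w}.1)$ for each pair $k < l$ in $\{1,2,3\}$. Using the elementary observation $\mathrm{lcm}(a,b)/\min(a,b) \geq \max(a,b)/\min(a,b)$ together with the bound $\mathfrak{c}(\Gamma(b_{k,N}, d_{k,N}), \Gamma(b_{l,N}, d_{l,N})) \leq M_N^2/L_{k,l,N}$ from Theorem \ref{thm:gamma}, it is enough to show that $\max(d_{k,N}, d_{l,N})/\min(d_{k,N}, d_{l,N}) \to \infty$ in each pair. For $(k,l) = (1,2)$ this ratio equals $d_{1,N}/d_{2,N} = b_{2,N}/b_{1,N}$, which diverges since $b_{1,N}$ is bounded and $b_{2,N} \to \infty$. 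For $(k,l) = (1,3)$ it equals $d_{1,N}/d_{3,N}$, diverging because $d_{3,N}$ is bounded while $d_{1,N} \to \infty$. For $(k,l) = (2,3)$ it equals $d_{2,N}/d_{3,N}$, diverging for the same reason. Theorem \ref{thm:main} (or equivalently Corollary \ref{cor:49}) then yields almost sure asymptotic freeness.

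There is no serious technical obstacle in this proof---the entire argument is a bookkeeping verification on the side of the divisibility conditions built into the previous results. The only small matter to be careful about is the convention that sorting by increasing $b_{i,N}$ is the same as sorting by decreasing $d_{i,N}$, and that the quantity $L_{k,l,N}$ appearing in the bound of Theorem \ref{thm:gamma} is bounded below by $\max(d_{k,N},d_{l,N})/\min(d_{k,N}, d_{l,N})$, which is what makes the three pairwise ratios above the right objects to estimate and rules out any ``divisibility accident'' between the three $d$-sequences.
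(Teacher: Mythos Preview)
Your proposal is correct and follows exactly the route the paper intends: the corollary is stated in the paper without proof, as an immediate particular case of Theorem~\ref{thm:main}, and your argument simply makes the verification of condition~$(\mathfrak{w}.1)$ explicit via the elementary bound $L_{k,l,N}\geq \max(d_{k,N},d_{l,N})/\min(d_{k,N},d_{l,N})$. The only cosmetic point is that the reference to the bound on $\mathfrak{c}$ from Theorem~\ref{thm:gamma} is not needed once you are invoking Theorem~\ref{thm:main} directly---showing $L_{k,l,N}\to\infty$ is already the content of~$(\mathfrak{w}.1)$.
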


An example of a family a permutations non-related to partial transposes that is satisfying Lemma \ref{lemma:split} is given in Remark \ref{remark:sn} below. In the next Section, we shall discuss the framework of
``left-partial transposes", and improve the results from \cite{mingo-popa-wishart}.

\begin{remark}\label{remark:sn}
Suppose that for each positive integer 
$ N $, $ \theta_N $ 
is a permutation from 
$ \mathcal{S}(N ) $ 
such that
\begin{equation}
\label{theta}
\lim_{N \rightarrow \infty}
\frac{ \# \big\{ j \in [ N ]:\  
\theta_N (j) = j 
\big\}}{N} = 0.
\end{equation}
Define 
$ \sigma_N \in \mathcal{S}([ N ]^2) $
via
$ \sigma_N (i, j) =
\big( \theta_N(i), \theta_N(j) \big) $.

If $ W_N $ 
is a 
$ N \times N $
Wishart matrix, then
$ W_N $ 
and
$W_N^{ \sigma_N } $
are almost surely asymptotically free. 
\end{remark}
\begin{proof}
Note first that
$ \big\{ W_N^{ \sigma_N } \big\}_N  $
is an Wishart ensemble,  since, with the notations from Section \ref{section:vV}, 
\[ 
W_N^{ \sigma_N} = G_N^{ \sigma_N } \cdot ( G_N^{ \sigma_N})^\ast.
\]
So condition ($\mathfrak{c}.4$) and the property from Lemma \ref{segment:1}(i) are satisfied, hence it suffices to show that the family
$ \big\{ W_N, W_N^{ \sigma_N} \big\} $
satisfies the conditions
($\mathfrak{c}.1$), 
($\mathfrak{c}.2$)
and
($\mathfrak{c}.3$).

First, note that 
$ \sigma_N (i, j) = (i, j) $ 
is equivalent to 
$ \theta_N(i) = i $ 
and 
$ \theta_N(j) = j $.
So condition (\ref{theta}) gives
\[
\#\big\{ (i, j) \in [ N ]^2:\  \sigma_N (i, j) = (i, j) \big\} 
= \# \big\{ i \in [ N ]:\  \theta_N(i) = i \big\}^2 = o(N^2). 
\]
Similarly,
\begin{multline}
\#\big\{(i, j, l) \in [ N ]^3:\ 
\pi_2 \circ \sigma_N (i, j) = \pi_1 (j, l)
\big\} \\
=
\#\big\{(i, j, l) \in [ N ]^3:\ 
\theta_N (j ) = j 
\big\}
= o(N^3).
\end{multline}
Finally,
\begin{align*}
\# \big\{ 
(i, j, l) \in [ N ]^3 :\
\pi_1 \circ \sigma_N (i, j) 
= \pi_2 ( j, l)
\big\} 
= 
\big\{(i, j, l) \in [ N ]^3 :\ 
\theta_N ( i) = l 
\big\}
= N^2
\end{align*}
and the conclusion follows from Lemma \ref{lemma:split} and Theorem \ref{thm:52}. 
\end{proof}

\subsection{Relation
to left partial transposes }  
For 
$ b \cdot d = M $, 
we define the 
\emph{left partial transpose}
$\Ltr(b, d)(X) $
of a 
$ M \times M $
matrix $ X $ 
as the transpose of 
$ \Gamma(b,d)(X) $.
I.e. we see 
$ X $ 
as a  
$ b \times b $
block matrix 
$ X = \big[  X_{i, j} \big]_{i, j =1}^b $
with entries
$ X_{i, j} $
being
$ d \times d $
matrices;
the matrix 
$ \Ltr(b, d)(X) $ 
is obtained then by changing 
each 
$ X_{i, j} $ 
with
$ X_{j, i}$
without transposing the entries inside the blocks.   

\begin{thm}\label{thm:ltr:main}
Suppose that  
the sequences 
$\{ M_N \}_N $
$ \{ b_{N }\}_N $, 
$ \{ d_N\}_N $,
$ \{ B_{N} \}_N $
and
$ \{ D_N \}_N $
satisfy the following conditions:
\begin{itemize}
	\item[(i)]
	 $ \{ M_N \}_N $ 
	is strictly increasing;
	\item[(ii)] $ b_{ N} \cdot d_{ N}
	= B_N \cdot D_N = M_N $ 
	\item[(ii)] the limits 
	$ \displaystyle
	\lim_{N \rightarrow \infty} 
	b_{ N} $,
	$ \displaystyle
	\lim_{N \rightarrow \infty} 
	d_{ N} $ ,
	respectively
	$ \displaystyle
	\lim_{N \rightarrow \infty} 
	B_{ N} $, 
	and
	$ \displaystyle
	\lim_{N \rightarrow \infty} 
	D_{ N} $          
	do exist.
\end{itemize} 	
Suppose also that
$ W_N $ 
is a
$ M_N \times M_N $
Wishart matrix.
Then the following statements are equivalent:
\begin{itemize}
	\item[(1)] 
	 $ \displaystyle
	\lim_{N \rightarrow \infty}
	d_N \cdot B_N 
	= 
	\lim_{N \rightarrow \infty}
	b_N \cdot D_N 
	= \infty $
	\item[(2)] 
	$ W_N^{ \Gamma(b_{ N}, d_{ N})} $
	and
	$ W_N^{ \ltr(B_{ N}, D_{ N})} $
	are almost surely asymptotically free
	\item[(3)]
	$ \displaystyle
	\lim_{N \rightarrow \infty}
	\frac{ \# \big\{ (i, j) \in [ M_N ]^2 :\ 
		\Gamma( b_N, d_N ) (i, j) = \Ltr ( B_N, D_N ) (i, j) 
		\big\}}{M_N^2} = 0.
	$
\end{itemize}
	\end{thm}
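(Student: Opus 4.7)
The plan is to mimic the structure of the proof of Theorem \ref{thm:main}. First I would establish the equivalence (1) $\Leftrightarrow$ (3) by a direct counting argument in the spirit of Theorem \ref{thm:gamma}, then prove (1) $\Rightarrow$ (2) via Lemma \ref{lemma:split}, and finally (2) $\Rightarrow$ (3) via Lemma \ref{lemma:nf:1}; the almost-sure upgrade in (2) is again furnished by Theorem \ref{thm:52}.

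For (1) $\Leftrightarrow$ (3), the key step is to count pairs $(i,j) \in [M_N]^2$ with $\Gamma(b_N, d_N)(i,j) = \Ltr(B_N, D_N)(i,j)$. Writing the joint decomposition $i = (\alpha_1 - 1)d_N + \beta_1 = (\gamma_1 - 1)D_N + \delta_1$ and analogously for $j$, a direct computation (paralleling the opening steps of Theorem \ref{thm:gamma}) shows that the two coordinate equalities reduce, after the decomposition identities are used to eliminate redundancies, to the single Diophantine condition $\beta_2 - \beta_1 = (\gamma_2 - \gamma_1)D_N$. A case analysis on whether $d_N \leq D_N$ or $d_N \geq D_N$ then yields two-sided bounds $\mathfrak{c}(\Gamma(b_N, d_N), \Ltr(B_N, D_N)) = \Theta(\max(b_N D_N, d_N B_N))$. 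Dividing by $M_N^2 = b_N d_N B_N D_N$ shows that the fixed-point fraction is of order $1/\min(b_N D_N, d_N B_N)$, which tends to zero if and only if both $b_N D_N$ and $d_N B_N$ tend to infinity, i.e., condition (1).

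For (1) $\Rightarrow$ (2) I would verify the hypotheses of Lemma \ref{lemma:split} for the two-element family $\{\Gamma(b_N, d_N), \Ltr(B_N, D_N)\}$. The individual conditions (property $(\mathfrak{c}.4)$ and the vanishing statement of Lemma \ref{segment:1}(i)) hold for $\Gamma(b_N, d_N)$ by Lemmata \ref{segment:1} and \ref{segment:2}; for $\Ltr(B_N, D_N)$ they follow either from a parallel computation, or from the distributional identity $W_N^{\Ltr(B_N, D_N)} \stackrel{d}{=} \widetilde{W}_N^{\Gamma(B_N, D_N)}$ for another Wishart $\widetilde{W}_N$ (using $\Ltr = t \circ \Gamma$ together with the invariance of complex Ginibre matrices under transposition). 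The pair conditions are handled by the same combinatorial scheme as above: $(\mathfrak{c}.1)$ reduces to a count of the same flavour as the fixed-point count and is bounded by $O(\max(b_N D_N, d_N B_N)) = o(M_N^2)$ under (1), while $(\mathfrak{c}.2)$ and $(\mathfrak{c}.3)$ are shown to be $O(M_N^2) = o(M_N^3)$ by an explicit count analogous to Theorem \ref{thm:2:3}. Lemma \ref{lemma:split} then yields asymptotic freeness, and Theorem \ref{thm:52} upgrades this to the almost-sure statement.

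Finally, (2) $\Rightarrow$ (3) follows immediately from Lemma \ref{lemma:nf:1}: almost-sure asymptotic freeness implies asymptotic freeness and therefore $\liminf$ of the fixed-point fraction equals zero, while the existence of the limits $\lim b_N$, $\lim d_N$, $\lim B_N$, $\lim D_N$ forces the fraction itself to converge, so $\lim = \liminf = 0$. The main obstacle I anticipate is the sharp counting bound underlying (1) $\Leftrightarrow$ (3): although the defining equation $\beta_2 - \beta_1 = (\gamma_2 - \gamma_1)D_N$ is simpler in form than its analog in Theorem \ref{thm:gamma}, extracting the tight $\Theta(\max(b_N D_N, d_N B_N))$ estimate, and in particular verifying that the choices of $(\alpha_2, \gamma_2)$ permitted by the range restrictions automatically yield a consistent $j \in [M_N]$, requires careful case analysis parallel to the proof of Theorem \ref{thm:gamma}.
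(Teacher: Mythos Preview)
Your overall strategy matches the paper's: $(1)\Rightarrow(2)$ via Lemma~\ref{lemma:split} after verifying $(\mathfrak{c}.1)$--$(\mathfrak{c}.4)$, $(2)\Rightarrow(3)$ via Lemma~\ref{lemma:nf:1}, and Theorem~\ref{thm:52} for the almost-sure upgrade. The one structural difference is how the loop is closed. You propose to prove $(1)\Leftrightarrow(3)$ directly by a two-sided count $\mathfrak{c}\big(\Gamma(b_N,d_N),\Ltr(B_N,D_N)\big)=\Theta(\max(b_N D_N, d_N B_N))$, whereas the paper argues $(3)\Rightarrow(1)$ by contrapositive, exhibiting for bounded $d_N B_N$ an explicit set $F$ of common fixed points of density at least $1/(4 B_N^2 d_N)$. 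Your route is tidier once the $\Theta$ bound is in hand; the paper's avoids having to pin down sharp constants. One correction to your reduction: the equation $\Gamma(b,d)(i,j)=\Ltr(B,D)(i,j)$ actually produces \emph{two} constraints, $\beta_2-\beta_1=(\gamma_2-\gamma_1)D$ and $(\alpha_1-\alpha_2)d=\delta_1-\delta_2$. In each regime $d\le D$ or $d\ge D$ one of them forces the equalities $\beta_1=\beta_2,\ \gamma_1=\gamma_2$ (resp.\ $\alpha_1=\alpha_2,\ \delta_1=\delta_2$) and the other is then automatic from the decomposition identities for $i$ and $j$, so your ``single condition'' picture is ultimately right but needs this remark.

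There is one genuine slip. The claim that the $(\mathfrak{c}.2)$ and $(\mathfrak{c}.3)$ counts are $O(M_N^2)$ unconditionally is false. The paper's explicit computation (equations (\ref{eq:ltr:5.2})--(\ref{eq:ltr:6.3})) gives bounds of the form $M_N^3/b_N$ and $M_N^3/D_N$ for $(\mathfrak{c}.3)$, and $M_N^3/d_N$ and $M_N^3/B_N$ for $(\mathfrak{c}.2)$; these are $o(M_N^3)$ precisely because condition~(1) forces $\max(b_N,D_N)\to\infty$ and $\max(d_N,B_N)\to\infty$. So you must invoke~(1) for $(\mathfrak{c}.2)$ and $(\mathfrak{c}.3)$ just as you do for $(\mathfrak{c}.1)$; the ``analogous to Theorem~\ref{thm:2:3}'' heuristic gives bounds of order $M_N^3/\text{(parameter)}$, not $M_N^2$.
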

\begin{proof}
By Theorem \ref{thm:52},  $ W_N^{ \Gamma(b_{ N}, d_{ N})} $
	and
	$ W_N^{ \ltr(B_{ N}, D_{ N})} $
	are almost surely asymptotically free if and only if they are
	are asymptotically free. So to prove the theorem we shall just
	work with asymptotic freeness.

	Suppose that 
	$ \displaystyle
	\lim_{N \rightarrow \infty}
	d_N \cdot B_N 
	= 
	\lim_{N \rightarrow \infty}
	b_N \cdot D_N 
	= \infty
	$.   
	Since 
	$W_N^{ \Ltr( B_N, D_N )}$
	is the transpose of
	$ W_N^{ \Gamma( b_N, d_N )} $, 
	it does satisfy the condition
	 ($\mathfrak{c}.4$) as well as the property from Lemma \ref{segment:1}(i).
	So, to show that property (2) holds true, it suffices then to show that
	$ \Gamma(b_N, d_N) $
	and
	$ \Ltr( B_N, D_N ) $
	satisfy the conditions 
	($\mathfrak{c}.1$),
	($\mathfrak{c}.2$),
	($\mathfrak{c}.3$).

	As seen before, for each 
	$ (i, j, l) \in [ M_N ]^3$
	there some are unique 
	$ \alpha_1, \alpha_2 \in [ b_N ]$,
	$ \beta_1, \beta_2 \in [ d_N ] $,
	$ \gamma_1, \gamma_2 \in [ B_N ]$
	and
	$ \delta_1, \delta_2 \in [ D_N ]$
	such that
	\begin{align*}
	i & = ( \alpha_1 - 1 ) d_N + \beta_1\\
	j & = ( \alpha_2 -1) d_N + \beta_2 
	= ( \gamma_2 -1) D_N + \delta_2 \\
	l & = ( \gamma_1 -1) D_N + \delta_1.
	\end{align*}
	The condition
	$  \Gamma( b_N, d_N ) (i, j) 
	= 
	\Ltr( B_N, D_N) (l, j) $
	gives then
	\begin{equation*}
	\left\{
	\begin{array}{l}
	( \alpha_1 - 1 ) d_N + \beta_2 
	=
	( \gamma_2 -1) D_N + \delta_1\\
	( \alpha_2 -1) d_N + \beta_1 
	= 
	( \gamma_1 -1) D_N + \delta_2\\
	( \alpha_2 -1) d_N + \beta_2 
	= ( \gamma_2 -1) D_N + \delta_2
	\end{array}
	\right.
	\end{equation*}
	which is equivalent to
	\begin{equation}\label{eq:ltr:3.1}
	\left\{
	\begin{array}{l}
	( \alpha_1 - \alpha_2 ) d_N
	=
	\delta_1 - \delta_2 
	\\
	\beta_1  - \beta_2
	= 
	( \gamma_1 - \gamma_2) D_N 
	\\
	( \alpha_2 -1) d_N + \beta_2 
	= ( \gamma_2 -1) D_N + \delta_2.
	\end{array}
	\right.
	\end{equation}

	If
	$ \displaystyle 
	\lim_{N \rightarrow \infty} d_N 
	\geq
	\lim_{N \rightarrow \infty} D_N $,
	then
	$ \delta_1 - \delta_2 \in ( - D_N +1, D_N -1)$
	so the first equation in
	(\ref{eq:ltr:3.1}) gives that
	$ \alpha_1 = \alpha_2 $
	and
	$ \delta_1 = \delta_2 $.
	Moreover, the second equation from
	(\ref{eq:ltr:3.1}) gives that 
	$ \gamma_1 $ 
	is uniquely determined by        
	$ (\beta_1, \beta_2, \gamma_2)$,
	and the third equation gives that
	$( \gamma_2, \delta_2) $
	is uniquely determined by
	$ ( \alpha_2, \beta_2)$.
	Therefore
	\begin{align}
	\# \big\{ (i, j, l) \in [ M_N ]^3:\ &
	\Gamma( b_N, d_N ) (i, j) 
	= 
	\Ltr( B_N, D_N) (l, j)
	\big\} \label{eq:ltr:4.1}\\
	& \leq \#\big\{ (\alpha_1, \beta_1, \beta_2) : \
	\alpha_1 \in [ b_N ], \beta_1, \beta_2 \in [ d_N ]
	\big\} \nonumber \\
	& = b_N \cdot d_N^2 
	= \frac{M_N^2}{b_N}. \nonumber
	\end{align}
	On the other hand,  the second equation from
	(\ref{eq:ltr:3.1}) also gives that  
	$ \beta_1 $
	is uniquely determined by
	$( \beta_2, \gamma_1, \gamma_2)$
	while the third equation gives that
	$ (\alpha_2, \beta_2)$
	is uniquely determined by
	$ ( \gamma_2, \delta_2) $.
	Therefore 
	\begin{align}
	\# \big\{ (i, j, l) \in [ M_N ]^3:\ &
	\Gamma( b_N, d_N ) (i, j) 
	= 
	\Ltr( B_N, D_N) (l, j)
	\big\} \label{eq:ltr:4.2}\\
	& \leq \#\big\{ (\gamma_1, \gamma_2, \delta_2) : \
	\gamma_1, \gamma_2 \in [ B_N ], 
	\delta_2 \in [ D_N ]
	\big\} \nonumber \\
	& = B_N^2 \cdot D_N
	 = \frac{M_N^2}{D_N}. \nonumber
	\end{align}  
	Since
	$ \displaystyle
	\lim_{N \rightarrow \infty} b_N \cdot D_N = \infty $,
	equations (\ref{eq:ltr:4.1}) and (\ref{eq:ltr:4.2}) imply that the condition 
	($\mathfrak{c}.1$)
	 is satisfied.

	If
	$ \displaystyle
	\lim_{N \rightarrow \infty}
	D_N \geq 
	\lim_{N \rightarrow \infty}
	d_N  
	$, 
	then
	$ \beta_1 - \beta_2 \in
	( - d_N + 1, d_N -1 ) $ 
	so the second equation from 
	(\ref{eq:ltr:3.1})
	 gives that
	$ \beta_1 = \beta_2 $ 
	and
	$ \gamma_1 = \gamma_2 $. 
	Similar to the argument above, the first equation from
	(\ref{eq:ltr:3.1})
	 gives that
	$ \delta_1 $
	 is uniquely determined by
	$ (\alpha_1, \alpha_2, \delta_2)$,
	and the third equation gives that
	$(\alpha_2, \beta_2) $ 
	uniquely determines
	$ (\gamma_2, \delta_2) $. 
	Therefore
	\begin{align}
	\# \big\{ (i, j, l) \in [ M_N ]^3:\ &
	\Gamma( b_N, d_N ) (i, j) 
	= 
	\Ltr( B_N, D_N) (l, j)
	\big\} \label{eq:ltr:4.3}\\
	& \leq \#\big\{ (\alpha_1, \alpha_2, \beta_1) : \
	\alpha_1, \alpha_2 \in [ b_N ], \beta_1 \in [ d_N ]
	\big\} \nonumber \\
	& = b_N^2 \cdot d_N = 
	\frac{M_N^2}{ d_N }. \nonumber
	\end{align}  
	On the other hand, the first equation from
	(\ref{eq:ltr:3.1}) also gives that
	$(\gamma_1, \gamma_2, \beta_2) $
	uniquely determines 
	$ \beta_1 $, and, again, from the third equation 
	$(\alpha_2, \beta_2) $
	are uniquely determined by
	$ (\gamma_2, \delta_2) $. 
	Therefore
	\begin{align}
	\# \big\{ (i, j, l) \in [ M_N ]^3:\ &
	\Gamma( b_N, d_N ) (i, j) 
	= 
	\Ltr( B_N, D_N) (l, j)
	\big\} \label{eq:ltr:4.4}\\
	& \leq \#\big\{ (\gamma_1, \delta_1, \delta_2) : \
	\gamma_1 \in [ B_N ], \delta_1, \delta_2 \in [ D_N ]
	\big\} \nonumber \\
	& = B_N \cdot D_N^2 = 
	\frac{M_N^2}{ B_N }. \nonumber
	\end{align}
	Using now that
	$ \displaystyle
	\lim_{N \rightarrow \infty} B_N \cdot d_N
	= \infty $, 
	equations (\ref{eq:ltr:4.3})
	and (\ref{eq:ltr:4.4})
	imply that the condition from 
	($\mathfrak{c}.1$)
	 is satisfied.
	
	For the condition 
	($\mathfrak{c}.3$), 
	 we have that
	the equality
	\[ 
	\pi_1 \circ \Gamma(b_N, d_N)(i, j) = 
	\pi_1 \circ \Ltr (B_N, D_N) (l, j)
	\]
	is equivalent to  
	\begin{equation}\label{eq:ltr:5.1}
	\left\{
	\begin{array}{l}
	( \alpha_1 - 1) d_N + \beta_2
	=
	( \gamma_2 -1) D_N + \delta_1
	\\
	( \alpha_2 -1) d_N + \beta_2 
	= 
	( \gamma_2 -1) D_N + \delta_2.
	\end{array}
	\right.
	\end{equation}  

	Note that the triple
	$ (i, j, l) \in [ M_N ]^3 $ 
	is then uniquely determined by the
	5-tuple
	$ ( \alpha_1, \alpha_2, \beta_1, \beta_2, 
	\gamma_1)  $
	since
	first equation of (\ref{eq:ltr:5.1}) gives that
	$ ( \alpha_1, \beta_2, \gamma_2) $
	uniquely determines 
	$ \delta_1$
	and the second equation gives that
	$(\alpha_2, \beta_2) $ 
	uniquely determines
	$(\gamma_2, \delta_2) $.
	Therefore
	\begin{align}
	\# \big\{ (i, j, l)  \in & [ M_N ]^3
	: \
	\pi_1 \circ \Gamma(b_N, d_N ) (i, j) 
	=
	\pi_1 \circ \Ltr (B_N, D_N) ( l, j) 
	\big\} \label{eq:ltr:5.2}\\
	\leq & 
	\# \big\{
	(\alpha_1, \alpha_2, \beta_1, \beta_2, \gamma_1):\ 
	\alpha_1, \alpha_2 \in [ b_N ], 
	\beta_1, \beta_2 \in [ d_N ],
	\gamma_1 \in [ B_N ]
	\big\}\nonumber\\
	= & 
	b_N^2 \cdot d_N^2 \cdot B_N 
	= \frac{M_N^3}{D_N}.
	\nonumber
	\end{align}

	On the other hand, the triple 
	$(i, j, l) $
	is also uniquely determined by
	the 5-tuple
	$ (\beta_1, \gamma_1, \gamma_2, \delta_1, \delta_2) $
	since the first equation of
	(\ref{eq:ltr:5.1}) gives that 
	$( \beta_2, \gamma_2, \delta_1)$
	uniquely determines
	$ \alpha_1$
	and the second equation gives that
	$(\gamma_2, \delta_2)$
	uniquely determines
	$(\alpha_2, \beta_2)$. 
	Therefore
	\begin{align}
	\# \big\{ (i, j, l)  \in & [ M_N ]^3
	: \
	\pi_1 \circ \Gamma(b_N, d_N ) (i, j) 
	=
	\pi_1 \circ \Ltr (B_N, D_N) ( l, j) 
	\big\} \label{eq:ltr:5.3}\\
	\leq & 
	\# \big\{
	(\beta_1, \gamma_1, \gamma_2, \delta_1, \delta_2):\ 
	\beta_1 \in [ d_N ],
	\gamma_1, \gamma_2 \in [ B_N ],
	\delta_1, \delta_2 \in [ D_N ]
	\big\}\nonumber\\
	= & 
	d_N \cdot B_N^2 \cdot D_N^2 
	= \frac{M_N^3}{b_N}.
	\nonumber
	\end{align}
	So condition
	($\mathfrak{c}.3$)
	follows from relations
	(\ref{eq:ltr:5.2}), (\ref{eq:ltr:5.3})
	and  from 
	$ \displaystyle 
	\lim_{N \rightarrow \infty} b_N \cdot D_N = \infty $.   
	
	Finally, to show that
	$ \Gamma( b_N, d_N ) $
	and 
	$ \Ltr( B_N, D_N) $
	satisfy  condition 
	($\mathfrak{c}.2$),
we use that the equality
	\[
	\pi_2 \circ \Gamma (b_N, d_N ) (i, j) 
	= \pi_2 \circ \Ltr (B_N, D_N )(l, j)
	\]
	is equivalent to
	\begin{equation}\label{eq:ltr:6.1}
	\left\{
	\begin{array}{l}
	( \alpha_2 - 1) d_N + \beta_1
	=
	( \gamma_1 -1) D_N + \delta_2
	\\
	( \alpha_2 -1) d_N + \beta_2 
	= 
	( \gamma_2 -1) D_N + \delta_2.
	\end{array}
	\right.
	\end{equation}  
	In this case, the first equation of
	(\ref{eq:ltr:6.1}) gives that the couples
	$ ( \alpha_2, \beta_1) $ 
	and
	$ ( \gamma_1, \delta_2 )$ 
	uniquely determine each other; 
	the second equation gives that so do the couples
	$( \alpha_2, \beta_2) $
	and
	$( \gamma_2, \delta_2) $,
	hence the triple
	$ (i, j, l)$
	is now uniquely determined by either of the 5-tuples
	$( \alpha_1, \alpha_2, \beta_1, \beta_2, \delta_1)$
	and
	$ ( \alpha_1, \gamma_1, \gamma_2, \delta_1, \delta_2)$.
	Therefore
	\begin{align}
	\# \big\{
	(i, j, l)  \in & [ M_N ]^3
	: \
	\pi_2 \circ \Gamma(b_N, d_N ) (i, j) 
	=
	\pi_2 \circ \Ltr (B_N, D_N) ( l, j) 
	\big\} \label{eq:ltr:6.2}\\
	\leq & 
	\# \big\{
	(\alpha_1, \alpha_2, \beta_1, \beta_2, \delta_1) :\ 
	\alpha_1, \alpha_2 \in [ b_N ],
	\beta_1, \beta_2 \in [ d_N ],
	\delta_1 \in [ D_N ]
	\big\}\nonumber\\
	= & 
	b_N^2 \cdot d_N^2 \cdot D_N
	= \frac{M_N^3}{B_N},
	\nonumber
	\end{align}  
	respectively   
	\begin{align}
	\# \big\{
	(i, j, l)  \in & [M_N ]^3
	: \
	\pi_2 \circ \Gamma(b_N, d_N ) (i, j) 
	=
	\pi_2 \circ \Ltr (B_N, D_N) ( l, j) 
	\big\} \label{eq:ltr:6.3}\\
	\leq & 
	\# \big\{
	(\alpha_1, \gamma_1, \gamma_2, \delta_1, \delta_2) :\ 
	\alpha_1 \in [ b_N ],
	\gamma_1, \gamma_2 \in [ d_N ],
	\delta_1, \delta_2 \in [ D_N ]
	\big\}\nonumber\\
	= & 
	b_N \cdot B_N^2 \cdot D_N^2
	= \frac{M_N^3}{d_N},
	\nonumber
	\end{align} 
	So condition
	($\mathfrak{c}.2$)
	relations
	(\ref{eq:ltr:6.2}), (\ref{eq:ltr:6.3})
	and  from the condition
	$ \displaystyle 
	\lim_{N \rightarrow \infty} d_N \cdot B_N = \infty $.

	If property (2) holds true, then (3) follows from Lemma \ref{lemma:nf:1}.
	
	Finally, to show that property (3) implies (1),  suppose that
	$ \displaystyle 
	\lim_{N \rightarrow \infty } d_N \cdot B_N < \infty 
	 $  (the case 
	 $ \displaystyle
	 \lim_{N \rightarrow \infty} D_N \cdot b_N < \infty $ will follow by taking transposes).
	 
Let
$ \{ e_N \}_N $
be a sequence of positive integers such that
\[
\frac{1}{2} D_{ N} \leq d_{ N} \cdot e_N 
\leq  D_{ N}.
\] 	 
In particular, 
$ \displaystyle \lim_{N \rightarrow \infty }
e_N = \infty $,
and
\[
(d_{N} \cdot e_N)^2 \geq 
\frac{M_N^2}{4 B_{ N}^2}.
\]	 
Consider the set
\[
F = \big\{ ( \alpha_1 \cdot d_{ N} + \beta ,
\alpha_2 \cdot d_{N} + \beta ) :\ 
\alpha_1, \alpha_2 = 0, 1, \dots, e_N -1 , \beta \in [ d_{ N} ] 
\big\}.
\] 
Note that, if 
$ (i, j) \in F $
then
$ \Gamma(b_{ N}, d_{ N})(i, j) = 
\Ltr ( B_{ N}, D_{ N})(i, j) = (i,j) $,
hence
\[
\ \# \big\{ (i, j) \in [ M_N ]^2 :\ 
\Gamma( b_N, d_N ) (i, j) = \Ltr ( B_N, D_N ) (i, j) 
\big\}
\geq \# F, 
\]
but
\[
\#F =  d_{ N} \cdot e_N^2 \geq
\frac{M_N^2}{ 4 B_{ N}^2 d_{ N} }
\]
and the conclusion follows.	 
	\end{proof}

An immediate consequence of Theorem \ref{thm:ltr:main} is the following result.
\begin{cor}
	For any two sequences 
	$ \{ b_N \}_N $
	and 
	$ \{ d_N\}_N $
	such that
	$ b_N \cdot d_N = M_N $
	and the limits
	$ \displaystyle
	\lim_{N \rightarrow \infty} 
	b_{ N} $,
	and 
	$ \displaystyle
	\lim_{N \rightarrow \infty} 
	d_{ N} $ ,
	exist,
	we have that
	$W_N^{ \Gamma( b_N , d_N) } $
	and
	$ W_N^{ \ltr(b_N, d_N )} $
	are almost surely asymptotically free.
\end{cor}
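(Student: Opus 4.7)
The plan is to derive this corollary as an immediate specialization of Theorem \ref{thm:ltr:main}, taking identical shape parameters for the two partial transposes. Concretely, I would set $B_N = b_N$ and $D_N = d_N$ in the hypotheses of that theorem. Hypothesis (i) of Theorem \ref{thm:ltr:main} is the standing assumption that $\{M_N\}_N$ is strictly increasing; hypothesis (ii) becomes the trivial identity $b_N \cdot d_N = b_N \cdot d_N = M_N$; and hypothesis (iii) on the existence of the four limits $\lim b_N$, $\lim d_N$, $\lim B_N$, $\lim D_N$ reduces to the assumption that $\lim b_N$ and $\lim d_N$ exist, which is granted.

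The only substantive step is to verify condition (1) of Theorem \ref{thm:ltr:main}, which in our setting reads
\[
\lim_{N \rightarrow \infty} d_N \cdot B_N \;=\; \lim_{N \rightarrow \infty} b_N \cdot D_N \;=\; \lim_{N \rightarrow \infty} b_N \cdot d_N \;=\; \lim_{N \rightarrow \infty} M_N \;=\; \infty.
\]
This is immediate from the standing hypothesis that $\{M_N\}_N$ is a strictly increasing sequence of positive integers. Having checked (1), the equivalence (1)$\Leftrightarrow$(2) in Theorem \ref{thm:ltr:main} delivers conclusion (2) at once, namely that $W_N^{\Gamma(b_N, d_N)}$ and $W_N^{\ltr(b_N, d_N)}$ are almost surely asymptotically free.

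There is no genuine obstacle here; this is a direct corollary in which every hypothesis of the theorem is supplied by the hypotheses of the corollary, and the key condition $\lim b_N D_N = \lim d_N B_N = \infty$ collapses to $\lim M_N = \infty$ when $(B_N, D_N) = (b_N, d_N)$. The only minor subtlety worth flagging is that there is no need to invoke the intricate cases (the comparisons between $\lim d_N$ and $\lim D_N$, etc.) from the proof of Theorem \ref{thm:ltr:main}, because in this symmetric situation both products $b_N D_N$ and $d_N B_N$ equal $M_N$ and tend to infinity for the simplest possible reason.
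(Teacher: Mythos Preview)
Your proposal is correct and takes essentially the same approach as the paper, which simply states that the corollary is an immediate consequence of Theorem \ref{thm:ltr:main}. Your explicit verification that condition (1) collapses to $\lim_{N\to\infty} M_N = \infty$ when $(B_N,D_N)=(b_N,d_N)$ is exactly the intended one-line argument.
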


Moreover, in the proof of Theorem \ref{thm:ltr:main} it is in fact shown that properties (1) and (3) are equivalent to the permutations
$ \Gamma( b_N, d_N ) $
and
$ \Ltr(B_N, D_N)$
satisfying the conditions
($\mathfrak{c}.1$),
($\mathfrak{c}.2$),
($\mathfrak{c}.3$). 
Hence we have the following.

\begin{cor}\label{cor:4:15}
	Let
	$\{ M_N \}_N $
	be a strictly increasing sequence of positive integers.
	Suppose that
	for each 
	$ N$,
	$W_N $ 
	is a
	 $M_N \times M_N $
	 Wishart matrix.
	
Suppose also that for each
$ i =1, 2, \dots, m + n $
there exist two sequences
$ \{ b_{i, N}\}_N $ 
and 
$ \{ d_{i, N}\}_N $
such that:
\begin{itemize}
	\item[(i)] $ b_{i, N} \cdot d_{i, N} = M_N $
	\item[(ii)] the limits 
	$ \displaystyle
	\lim_{N \rightarrow \infty} 
	b_{i, N} $,
	and 
	$ \displaystyle
	\lim_{N \rightarrow \infty} 
	d_{i, N} $ 
	exist.
	\item[(iii)] $ d_{1, N} \leq d_{2, N} \leq \dots 
	\leq d_{m, M} $
\end{itemize}
For 
$ i \leq m $,
denote 
$ \sigma_{i, N } = \Gamma(b_{i, N}, d_{i, N})$
and for
$ m + 1 \leq i \leq m + n $,
denote
$ \sigma_{i, N} = \Ltr(b_{i-m, N}, d_{i-m, N } ) $.
The following statements are then equivalent:
\begin{itemize}
	\item[(1)] The family 
	$ \big\{ W_N^{\sigma_{i, N} } :\ i = 1, 2, \dots, m + n \big\} $
	is almost surely asymptotically free
	\item[(2)] If
	$ k < l $,
	then 
	$\{ d_{k, N}\}_N $
		and
	$\{ d_{l, N} \}_N $	
	satisfy 
	condition 
	$(\mathfrak{w}.1) $
	if either
	$ l \leq m $ 
	or 
	$ m < k $,
	respectively satisfy the property
\[	
	\lim_{N \rightarrow \infty} d_{k, N} \cdot b_{l, N} = 
	\lim_{N \rightarrow \infty } b_{k, N} \cdot d_{l, N} 
	= \infty 
\]
	if $ k \leq m < l $.
	\item[(3)] Whenever
	$ a \neq b $, 
	we have that
	 \[
	\lim_{N \rightarrow \infty} \frac{1}{M_N^2}
	 \# \big\{  (i, j) :\ 
	  \sigma_{a, N}(i, j) = \sigma_{b, N}(i, j) \big\} =0. 
	\]
\end{itemize}
\end{cor}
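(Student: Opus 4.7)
The plan is to reduce the three-way equivalence to the pairwise results already established in Theorems \ref{thm:main} and \ref{thm:ltr:main}, and then assemble the family-wide statement via Lemma \ref{lemma:split}. Throughout I would exploit the fact that $\Ltr(b,d)$ is the matrix transpose of $\Gamma(b,d)$, so $W^{\Ltr(b,d)} = \big(W^{\Gamma(b,d)}\big)^{t}$; consequently the existence of the limit distribution, condition $(\mathfrak{c}.4)$, and the property from Lemma \ref{segment:1}(i) -- which are the individual (one-matrix) hypotheses of Lemma \ref{lemma:split} -- pass from $\Gamma$ to $\Ltr$ automatically, and so hold for every $\sigma_{i,N}$ in our family.

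The implication (1)$\Rightarrow$(3) is immediate: almost-sure asymptotic freeness of the whole family implies asymptotic freeness of each pair, and Lemma \ref{lemma:nf:1} applied to the symmetric permutations $\sigma_{a,N}$, $\sigma_{b,N}$ forces the $o(M_N^2)$ fixed-point bound in (3). For (2)$\Leftrightarrow$(3) I would fix a pair $a<b$ and split into three cases. If $b\leq m$ (two $\Gamma$'s), Theorem \ref{thm:gamma} sandwiches the fixed-point count between $M_N^{2}/L_{a,b,N}^{2}$ and $M_N^{2}/L_{a,b,N}$, so (3) on this pair is equivalent to $(\mathfrak{w}.1)$. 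The case $m<a$ (two $\Ltr$'s) reduces to the previous one by transposing. The mixed case $a\leq m<b$ is exactly the equivalence (1)$\Leftrightarrow$(3) in Theorem \ref{thm:ltr:main} applied to the pair $(b_{a,N},d_{a,N})$ and $(b_{b-m,N},d_{b-m,N})$.

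For the main implication (3)$\Rightarrow$(1), I would invoke Lemma \ref{lemma:split}: having already arranged the individual conditions on each $\sigma_{i,N}$, it suffices to verify that every pair in the family satisfies $(\mathfrak{c}.1)$, $(\mathfrak{c}.2)$, $(\mathfrak{c}.3)$. For a pair of $\Gamma$'s this is the step inside the proof of Theorem \ref{thm:main} that derives $(\mathfrak{c}.1)$--$(\mathfrak{c}.3)$ from $(\mathfrak{w}.1)$ using Theorems \ref{thm:gamma} and \ref{thm:2:3}; for a pair of $\Ltr$'s it follows by transposing; and for a mixed pair it is the estimate block \eqref{eq:ltr:4.1}--\eqref{eq:ltr:6.3} in the proof of Theorem \ref{thm:ltr:main}, which yields $(\mathfrak{c}.1)$--$(\mathfrak{c}.3)$ from $\lim d_{a,N}\cdot b_{b-m,N} = \lim b_{a,N}\cdot d_{b-m,N} = \infty$. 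Since (3) and (2) were just shown to coincide pair by pair, the hypothesis of Lemma \ref{lemma:split} is met, and the family $\{W_N^{\sigma_{i,N}}\}$ is asymptotically free. The upgrade to almost-sure asymptotic freeness is then the verbatim application of Theorem \ref{thm:52} used at the end of the proof of Theorem \ref{thm:main}.

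The only real obstacle is bookkeeping: ensuring that when partial transposes and left partial transposes of possibly different block sizes are interleaved in a single family, the pairwise verifications combine consistently and the two sub-conditions in (2) (namely $(\mathfrak{w}.1)$ versus $\lim d_{k,N}b_{l,N}=\lim b_{k,N}d_{l,N}=\infty$) match the correct case as indicated by the position of $k,l$ relative to $m$. The serious combinatorial work is already packaged in Theorems \ref{thm:main} and \ref{thm:ltr:main}; the present corollary is essentially a uniform reformulation obtained by invoking Lemma \ref{lemma:split} on the entire family at once rather than on pairs.
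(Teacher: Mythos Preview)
Your proposal is correct and follows precisely the approach the paper indicates: the paper does not give a stand-alone proof of this corollary but observes, in the paragraph immediately preceding it, that the proof of Theorem \ref{thm:ltr:main} actually establishes that conditions (1) and (3) there are each equivalent to the pair $\Gamma(b_N,d_N)$, $\Ltr(B_N,D_N)$ satisfying $(\mathfrak{c}.1)$--$(\mathfrak{c}.3)$, and then states the corollary as a direct consequence. Your write-up simply makes explicit the case split (two $\Gamma$'s, two $\Ltr$'s, mixed) and the invocation of Lemma \ref{lemma:split} on the full family together with Theorem \ref{thm:52}, which is exactly what the paper has in mind.
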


\section{Second order fluctuations and almost sure convergence} 
\label{sec:fluctuations}

In this section we show that the covariances
\[
\mathrm{Cov}\big(\Tr( W_N^{ \sigma_{1, N}}
   W_N^{ \sigma_{2, N}} \cdots
    W_N^{ \sigma_{m, N}}  ),
     \Tr(W_N^{ \sigma_{ m+1, N}} \cdots
      W_N^{ \sigma_{ m+r, N}})   \big)
\]      
are bounded independently of $N$. This will be used to prove Theorem \ref{thm:52}, which shows that
convergence in moments will imply almost the sure convergence that we claimed in Section \ref{main:section}.

\begin{lemma}\label{lemma:cov}
 Suppose that 
 $ m, r $ 
 are two positive integers and that, for every  
 $ s \in [ m + r ]$,
 $\{  \sigma_{s, N} \}_N $ 
 is a sequence of symmetric permutations with
 each
 $ \sigma_{s, N}  $  
 being an element of
 $ \mathcal{S}( [ M_N]^2 ) $.
 Then there exist some positive 
 $ C ( m , r )$ 
 such that for each
  $ N $ 
  we have that
  \begin{align*}
   \mathrm{Cov}\big(\Tr( W_N^{ \sigma_{1, N}}
   W_N^{ \sigma_{2, N}} \cdots
    W_N^{ \sigma_{m, N}}  ),
     \Tr(W_N^{ \sigma_{ m+1, N}} \cdots
      W_N^{ \sigma_{ m+r, N}})   \big) < C(m, r).
  \end{align*}
\end{lemma}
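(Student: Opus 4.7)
The plan is to apply Wick's formula to the covariance, in the style of Section \ref{section:vV}, and then to bound each resulting combinatorial term by $O(M_N^{m+r})$ via an adaptation of Lemma \ref{lemma:1:1}. Define the two-cycle index set $\mathcal{I}_2(m, r)$ by retaining $j_k = j_{-k}$ for $k \in [m+r]$ but replacing the cycle-closure condition of $\mathcal{I}(m+r)$ with two separate closures: $i_{-k} = i_{k+1}$ for $k \in [m+r] \setminus \{m, m+r\}$, $i_{-m} = i_1$, and $i_{-(m+r)} = i_{m+1}$. Writing $\os = (\sigma_{1, N}, \ldots, \sigma_{m+r, N})$, Wick's theorem in the spirit of (\ref{formula:1})--(\ref{eq:wick}) yields
\[
E\bigl(\Tr(W_N^{\sigma_{1, N}} \cdots W_N^{\sigma_{m, N}}) \cdot \Tr(W_N^{\sigma_{m+1, N}} \cdots W_N^{\sigma_{m+r, N}})\bigr) = \sum_{\pi \in \cP_2(2(m+r), 2)} \frac{\# \mathcal{A}_2(\pi, \os)}{M_N^{m+r}},
\]
where $\mathcal{A}_2(\pi, \os) = \{\vec u \in \mathcal{I}_2(m, r) : v(\pi, \os, \vec u) \ne 0\}$.

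Declare $\pi$ to be \emph{disconnecting} if it splits as $\pi = \pi_A \sqcup \pi_B$ with $\pi_A \in \cP_2(2m, 2)$ and $\pi_B \in \cP_2(2r, 2)$, and \emph{connecting} otherwise. Since $\mathcal{I}_2(m, r)$ factors canonically as $\mathcal{I}(m) \times \mathcal{I}(r)$, the disconnecting contributions sum exactly to $E(\Tr A) \cdot E(\Tr B)$, leaving
\[
\mathrm{Cov}(\Tr A, \Tr B) = \sum_{\pi \text{ connecting}} \frac{\# \mathcal{A}_2(\pi, \os)}{M_N^{m+r}}.
\]
Since $|\cP_2(2(m+r), 2)|$ depends only on $m + r$, the lemma reduces to showing $\# \mathcal{A}_2(\pi, \os) \le C \, M_N^{m+r}$ for each connecting $\pi$, with $C$ independent of $N$. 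For this, I would pick a cross-cycle pair $(a, b) \in \pi$ with $a \in [2m]$ and $b \in [2m+1, 2(m+r)]$, and take $D_0$ to be the smallest subset of $[2(m+r)]$ containing $a$, $b$, their $\delta$-blocks, and closed under $\pi$. The $l$- and $j$-identifications forced by the cross-cycle pair, combined with the symmetry of each $\sigma_{k, N}$ (which gives $\pi_1 \circ \sigma_{k, N}(i, i) = \pi_2 \circ \sigma_{k, N}(i, i)$ on the diagonal), should produce a base estimate $\# \mathcal{A}_2(\pi, \os; D_0) \le C_0 \, M_N^{\#D_0 / 2}$. Iterating Lemma \ref{lemma:1:1} to extend $D_0$ all the way to $[2(m+r)]$ then gives
\[
\# \mathcal{A}_2(\pi, \os) \le C_0 \, M_N^{\#D_0/2} \cdot (\max\{M_N, P_N\})^{(2(m+r) - \#D_0)/2} \le C' \, M_N^{m+r},
\]
using $P_N \le c\, M_N$.

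The main obstacle is twofold. First, Lemma \ref{lemma:1:1} is stated and proved for the single-cycle index set $\mathcal{I}(m)$, and its proof invokes the cycle-closure identity $i_{-k} = i_{k+1}$; in $\mathcal{I}_2(m, r)$ this holds only within each cycle, so one must order the sequence of extensions so as not to cross the gap between positions $m$ and $m+1$ (or between $m+r$ and $1$) except through the preselected connecting pair $(a, b)$, where the identifications are treated by hand. Second, the base estimate $\# \mathcal{A}_2(\pi, \os; D_0) \le C_0 \, M_N^{\#D_0 / 2}$ is one factor of $\max\{M_N, P_N\}$ sharper than the naive counting; justifying this saving requires a careful combined use of the cross-cycle $\pi$-identifications and the symmetry of each $\sigma_{k, N}$, and the verification may need to be split into cases according to how the $\pi$-partners of the other elements of the two $\delta$-blocks are distributed. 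Once the individual bound is in hand, summing over the finitely many connecting $\pi$ produces the desired constant $C(m, r)$, depending only on $m$, $r$, and the ratio $c = \lim P_N / M_N$.
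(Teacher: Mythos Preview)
Your overall strategy---expand via Wick's formula, split off the disconnecting pairings as $E(\Tr A)\cdot E(\Tr B)$, and bound each connecting pairing's contribution by $O(M_N^{m+r})$---is exactly the paper's. The difference is in how the bound $\#\mathcal{A}_2(\pi,\os) \le C\,M_N^{m+r}$ is obtained.

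The paper does not build outward from a base set $D_0$ and iterate Lemma~\ref{lemma:1:1}. Instead, after cyclically permuting each of $[2m]$ and $\{2m+1,\dots,2m+2r\}$ so that $\pi(2m)=2m+1$, it runs a straight left-to-right count: let $B_{\os}(k)$ be the number of admissible partial tuples $(i_s,j_s,j_{-s},i_{-s})_{s\le k}$, and establish a recursion $B_{\os}(k)\le B_{\os}(k-1)\cdot(\max\{M_N,P_N\})^{2-\#(\pi(\{2k-1,2k\})\cap[2k])}$. The single extra saving that drops the final exponent from $m+r+1$ to $m+r$ occurs at step $k=m$, where \emph{both} $i_m=i_{-(m-1)}$ and $i_{-m}=i_1$ are already determined by the first cycle's closure, leaving only $j_m$ free. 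The normalization $\pi(2m)=2m+1$ is what then allows crossing into the second cycle at step $k=m+1$ without losing a factor.

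Your proposed base estimate $\#\mathcal{A}_2(\pi,\os;D_0)\le C_0\,M_N^{\#D_0/2}$ is plausible, but the mechanism you name---symmetry of $\sigma_{k,N}$ forcing a diagonal---is not the one doing the work; in the paper the saving comes from the cycle closure $i_{-m}=i_1$, not from any diagonal constraint. Symmetry of $\sigma_{k,N}$ is invoked only in the subcase $\pi(2k-1)=2k$ of the recursion, which is not where the decisive saving lies. Your plan to ``order the extensions so as not to cross the gap except through the connecting pair'' is essentially what the paper's cyclic normalization achieves in one stroke. So your outline is viable, but the two obstacles you flag are precisely what the normalization-plus-sequential-counting device dissolves; adopting that device is simpler than carrying out the $D_0$ case analysis you sketch.
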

\begin{proof}
 For 
 $ a $ and $ b $ 
 two positive integers, we shall denote by
  $ \cP_2^{a, b}(a+b, 2) $
  the set of all pair partitions 
  $ \pi $ 
  on 
  $ [ a + b ] $
  such that for 
  $ k  $ and $ \pi ( k ) $
   have different parities for each 
    $ k \in [ a + b ] $
  and there exists some 
  $ s \leq a $ 
  such that
  $ \pi (s ) > a $. 
  In other words, 
  \begin{align*}
  \cP_2^{ a, b } (b + b, 2 ) 
  = \{ 
  \pi \in \cP_2 ( a + b , 2 ) :\
  \pi \neq \pi_1 \oplus \pi_2 
  \textrm{ for any } &
  \pi_1 \in \cP_2 ( a, 2 ) \\
  &  \textrm{ and any } \pi_2 \in \cP_2 ( b, 2 )
   \}.
  \end{align*}
  (Here by $ \oplus $ we shall understand the concatenation of two multi-indices or of two pairings).
  
  Since, with the notations from Section \ref{section:vV}, we have 
  \begin{align*}
  E \big(\Tr( W_N^{ \sigma_{1, N}}
     W_N^{ \sigma_{2, N}} \cdots
      W_N^{ \sigma_{m, N}}  )
      \cdot
       \Tr & (W_N^{ \sigma_{ m+1, N}} \cdots
        W_N^{ \sigma_{ m+r, N}})   \big)\\
        & 
          = 
         \sum_{ \pi \in \cP_2 (2m + 2r, 2 ) }
         \sum_{ \substack{
          \vec{ u_1} \in \I ( m ) \\ 
          \vec{ u_2 } \in \I ( r ) } }
          v ( \pi,  \os,
           \vec{ u_1 } \oplus  \vec{ u_2 } ),
  \end{align*}
it follows that
 \begin{align*}
   \mathrm{Cov} \big(\Tr ( W_N^{ \sigma_{1, N}}  &
    W_N^{ \sigma_{2, N}} \cdots 
     W_N^{ \sigma_{m, N}}  ), 
      \Tr(W_N^{ \sigma_{ m+1, N}} \cdots
       W_N^{ \sigma_{ m+r, N}})   \big) \\
         = &
       \sum_{ \pi \in \cP_2^{ 2m, 2r} ( 2m + 2r, 2 )}
       \ \sum_{ \substack{
                 \vec{ u_1} \in \I ( m ) \\ 
                 \vec{ u_2 } \in \I ( r ) } }
                 v ( \pi,  \os,
                  \vec{ u_1 } \oplus  \vec{ u_2 } )\\
                   = &
   M_N^{ - ( m + r )}   
 \sum_{ \pi \in \cP_2^{ 2m, 2r} ( 2m + 2r, 2 )}
\# \mathcal{A}^{ m, r} ( \pi, \os ),
                \end{align*}
 where
 \begin{align*}
 \mathcal{A}^{ m, r} ( \pi, \os ) = \{ \vec{ u_1} \oplus \vec { u_2 }:\ \vec { u_1 } \in \I ( m ),
 \vec { u_2 } \in \I (r) 
 \textrm{ and }
 v( \pi, \os, \vec{ u_1} \oplus \vec{ u_2} )
 \neq 0  \}
 \end{align*}
  So it suffices to show that for any
  $ \os $ 
  and any
  $ \pi \in \cP_2^{ 2m, 2r} ( 2m + 2r, 2 ) $,           
 \begin{align}\label{ineq:34}
 \# \mathcal{A}^{ m, r} ( \pi, \os ) \leq \big( \max\{ M_N, P_N\} \big)^{ m + r }.
 \end{align}
 
 To show (\ref{ineq:34}), fix 
 $ \pi \in \cP_2^{ 2m, 2r} ( 2m + 2r, 2 )$. 
 Since
 $ 2m $ 
 is even and 
 $ p $ 
 connects only elements of different parities, there exists some
 even
 $ s < 2m $ 
 such that 
 $ \pi(s) > 2m $.
 Therefore, without loss of generality, via circular permutations of the sets 
 $ [ 2m ] $ 
 and
 $ [ 2 m + 2r ] \setminus [ 2m ] $,
 it suffices to show
 (\ref{ineq:34})
 for pairings $ \pi $ 
 such that
 $ \pi( 2m ) = 2 m + 1 $.

 For each
 $ k \in [ m + r ] $,
  denote
 \begin{align*}
 B_{\os} ( k ) =  \# & \{
  (i_s, j_s, j_{-s}, i_{-s})_{ 1 \leq s \leq k }:\
  \textrm {there exists some } 
   ( i_l, j_l , i_{ -l}, j_{ -l})_{k < l \leq m + r}\\
   & \textrm{ such that } 
   ( i_1, j_1, i_{ -1}, j_{-1}, \dots, i_{m+r}, j_{ m  + r },
   j_{ - m -r}, i_{ - m - r } )
   \in  \mathcal{A}^{ m, r} ( \pi, \os ) 
   \},
 \end{align*}
and remark that, 
\begin{align}\label{b:k}
 B_{ \os} ( k ) \leq B_{ \os } ( k -1)
  \cdot
   \big( \max\{ M_N, P_N \} \big)^{ 2 -
    \#\big( \pi ( \{ 2k -1, 2k \} ) \cap [ 2k ] \big) }.
\end{align}

To prove (\ref{b:k}), we fix 
$ ( i_s, j_s, j_{ - s }, i_{ -s })_{ s \leq  k -1 } $ 
and, using a similar argument to the proof of Lemma \ref{lemma:1:1}, we shall show that the number of tuples
$ ( i_k, j_k, i_{ - k }, j_{ - k } ) $
such that
$ ( i_s, j_s, j_{ - s }, i_{ -s })_{ s \leq  k } $
can be completed to an element from
$ \mathcal{A}^{ m, r} ( \pi, \os ) $
  is at most
$  \big( \max\{ M_N, P_N \} \big)^{ 2 -
    \#\big( \pi ( \{ 2k -1, 2k \} ) \cap [ 2k ] \big) } $.
    
    Since
     $ i_k = i_{ - ( k - 1 ) } $
     and
     $ j_k = j_{ - k } $,
     we have that
     $( i_k, j_k, i_{ - k }, i_{ - k } ) $
     is uniquely determined by
     $ i_{ - ( k - 1 ) } $
     and by
     $ ( j_k, i_{ - k } ) $, 
     so
  $ B_{ \os } ( k ) \leq B_{ \os } ( k -1) \cdot M_N P_N  $. 
  
  If 
  $ \pi ( 2k -1 ) , \pi ( 2k ) \leq 2k - 2 $, 
  then besides the conditions above, we have that
  $ l_k = l_{  - \pi (2k -1) } $, $ j_k = j_{ \pi ( 2k -1 ) } $
  and
  $ l_{ - k } = l_{ \pi (2k ) } $.
  So
  $ ( l_k, j_k, l_{ -k } ) $,
  hence
  $ ( i_k, j_k, j_{ - k},  i_{ - k})$
  is uniquely determined by
  $ ( i_s, j_s, j_{ -s }, i_{ - s } )_{ s \leq k -1 } $.
  Then
   $ B_{ \os} (k) = B_{ \os } ( k -1 ) $.
   
   If 
     $ \pi ( 2k -1 ) = 2k $, 
   then
    $ l_k = l_{ - k } $. 
    Since 
 $ \sigma_{ k, N} $
  is symmetric, we obtain that
  $ i_k = i_{ - k } $, 
 so
 $ ( i_k, j_{ k } , j_{ - k }, i_{ - k } ) $
 is uniquely determined by
  $ j_k $ 
  and by
  $ ( i_s, j_s, j_{ -s }, i_{ - s } )_{ s \leq k -1 } $.
  Therefore
   $ B_{ \os} ( k) \leq B_{ \os} ( k -1) \cdot P_N $.
   
   If 
   $ \pi ( 2k -1 ) = q \leq 2k -2 $
   and
   $ \pi ( 2k ) > 2k $, 
   then
   $ l_{k } = l_{q} $ 
   and
   $ j_k = j_q $,
   so
   $ ( i_k, j_k, j_{ - k},  i_{ - k})$ 
  is uniquely determined by
   $ l_{ - k } $ 
   and by
   $ ( i_s, j_s, j_{ -s }, i_{ - s } )_{ s \leq k -1 } $;
  hence
  $ B_{ \os}( k) \leq B_{ \os} ( k -1 ) \cdot M_N $.
  The case
   $ \pi( 2k -1 ) > 2k $ and $ \pi (2k ) \leq 2k - 2 $
   is similar, so the proof for (\ref{b:k}) is complete. 
   
   Clearly
   \begin{align*}
   B_{ \os} ( 1) \leq
      \#\{ ( i_1, j_1, j_{ -1}, i_{ -1} ) : j_1 = j_{ - 1}\} =  M_N^2 P_N
   \end{align*}  
   so applying (\ref{b:k}) gives
   \begin{align}
   B_{ \os} (m-1) \leq ( M_N + P_N )^{2m -1 -
    \#\{ s \leq 2 ( m -1) :\ \pi (s) \leq 2 ( m -1) \} } .
   \end{align}
   
   On the other hand, note that
   \begin{align}\label{b:k:m}
   B_{ \os } ( m ) \leq B_{\os}( m-1) \cdot (M_N + P_N )^{ 1 -
    \# ( \pi ( \{ 2m-1, 2m \} ) \cap [ 2m ] )   }
   \end{align}
   so (\ref{b:k}) gives
   \begin{align*}
    B_{ \os } ( m ) \leq ( M_N + P_N )^{2m  -
        \#\{ s \leq 2 m :\ \pi (s) \leq 2 m \} } . 
   \end{align*}
    
   To show (\ref{b:k:m}), we use that 
   $ i_{ - m -1} = i_m $
   and
   $ i_{ -m } = i_1 $.
   So if both
   $ \pi ( 2 m -1 ) $
   and
   $ \pi ( 2 m )  $
   are greater than 
   $ 2m $, then
    $ ( i_m, j_m, j_{ -m }, i_{ - m } ) $
    is uniquely determined by 
     $ j_m $ 
     and 
     $ ( i_s, j_s, j_{ -s }, i_{ - s } )_{ s \leq k -1 } $.
     Hence, in this case, 
    $ B_{\os} ( m ) \leq B_{ \os} ( m -1 ) \cdot P_N $.
    If 
     $ \pi ( 2m -1) = q  \leq 2m -2 $
     then we also have that
     $ j_m = j_q $
     therefore
     $ ( i_m, j_m, j_{ -m }, i_{ - m } ) $
         is uniquely determined by 
          $ ( i_s, j_s, j_{ -s }, i_{ - s } )_{ s \leq k -1 } $,
          and
  $ B_{\os} ( m ) \leq B_{ \os} ( m -1 ) $.     
  
  Next, notice that 
  \begin{align}\label{b:k:m1}
  B_{ \os } ( m + 1 ) \leq ( M_N + P_N )^{ 2( m + 1) -
   \# \{ s \leq 2( m + 1 ): \pi (s) \leq 2( m +1 ) \} }
  \end{align}
  
  To prove the inequality above it suffices to show that (\ref{b:k}) holds true also for 
  $ k = m + 1 $. 
  To show it, note first that, since we are under the assumption 
  $ \pi ( 2m ) = 2m + 1 $,
  we have 
  $ l_{ - m} = l_{ m + 1 } $ 
  and 
  $ j_{ - m } = j_{ m + 1 } $.
  Hence, if
  $ \pi ( 2m + 2 ) > 2m +2 $, 
  the tuple 
  $ ( i_m, j_m, j_{ - m }, i_{ - m } ) $
  is uniquely determined by 
  $ l_{ -( m + 1 )} $
  and by
  $ ( i_s, j_s, j_{ -s }, i_{ - s } )_{ s \leq m } $, 
  so
  $ B_{ \os } ( m  + 1 ) \leq B_{ \os } (m ) \cdot M_N $.
  If
   $ \pi ( 2m + 2) \leq 2m $
   then we also have
    $ l_{ - ( m + 1) } = l_{ \pi ( 2m + 2) } $.
    In this case
  $ ( i_m, j_m, j_{ - m }, i_{ - m } ) $
  is uniquely determined by
   $ ( i_s, j_s, j_{ -s }, i_{ - s } )_{ s \leq m } $, 
     so
     $ B_{ \os } ( m  + 1 ) \leq B_{ \os } (m ) $,
     which completes the proof of (\ref{b:k:m1}).
  
    Finally, inequality (\ref{b:k:m1}) and another application of 
    (\ref{b:k}) give
    \begin{align*}
 B_{ \os} ( m + r ) \leq   ( M_N + P_N )^{ 2 m + 2 r - 
 \# \{ s \in [ 2m + 2 r ]: \pi (s) \leq 2m + 2 r \} } 
 =
 ( M_N + P_N )^{ m + r } . 
    \end{align*}
   But
    $ B_{ \os} ( m + r ) = \# \mathcal{A}^{m, r} ( \pi, \os ) $,
    so the proof is complete.
\end{proof}

 Assume now that for each positive integer 
 $ N $, the entries of the 
 $ M_N \times P_N $ 
 Ginibre matrix 
 $ G_N $  are independent identically distributed Gaussian random variables of variance 
 $ \displaystyle \frac{1}{ \sqrt{ M_N} } $
  from the same probability space 
 $ (\Omega, P ) $. 
 In particular, for each
 $ \omega \in \Omega $ 
 and each positive integer $ N $,
 the product 
 $ W_N ( \omega ) = G_N ( \omega ) G_N ( \omega)^\ast $ 
 is a
 $ M_N \times M_N $ 
 positive complex matrix.

 Standard techniques in probability (see, for example,
 \cite{billingsley}, or Chapter 4 of \cite{mingo-speicher}) give the following consequence of Lemma \ref{lemma:cov}.
 
 \begin{thm}\label{thm:52}
  If for each
  $ s \in [ m ] $
  and
  $ N \in \mathbb{N} $
  $ \sigma_{ s, N } $
  is a permutation from
  $ \mathcal{S}( [ M_N ]^2 ) $
  and
   $ L $ 
   is a complex number such that
   \begin{align*}
  \lim_{N \rightarrow \infty}
  E \circ \tr \big( 
  W_N^{ \sigma_{ 1, N}} W_N^{ \sigma_{2, N}}
   \cdots W_N^{ \sigma_{ m, N}}    
    \big) = L 
   \end{align*}
 then, almost surely on 
 $ ( \Omega, P) $,
 we have that
 \begin{align*}
 \lim_{N \rightarrow \infty} 
 \tr \big( 
   W_N(\omega)^{ \sigma_{ 1, N}} 
    W_N(\omega)^{ \sigma_{2, N}} 
      \cdots
       W_N(\omega)^{ \sigma_{ m, N} } 
 \big) = L.
 \end{align*}  
 \end{thm}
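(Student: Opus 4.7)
The plan is a standard variance-plus-Borel--Cantelli argument, with Lemma~\ref{lemma:cov} providing the variance bound.

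First I would observe that, since each $W_N$ is self-adjoint and each $\sigma_{s,N}$ is symmetric, every $W_N^{\sigma_{s,N}}$ is self-adjoint. Consequently, writing $A_N = W_N^{\sigma_{1,N}} W_N^{\sigma_{2,N}} \cdots W_N^{\sigma_{m,N}}$, one has $A_N^\ast = W_N^{\sigma_{m,N}} \cdots W_N^{\sigma_{1,N}}$ and hence
\[
\overline{\Tr(A_N)} = \Tr(A_N^\ast) = \Tr\bigl( W_N^{\sigma_{m,N}} W_N^{\sigma_{m-1,N}} \cdots W_N^{\sigma_{1,N}} \bigr).
\]
Applying Lemma~\ref{lemma:cov} to the $2m$ permutations $\sigma_{1,N},\dots,\sigma_{m,N},\sigma_{m,N},\dots,\sigma_{1,N}$ (that is, taking $r=m$ and $\sigma_{m+k,N}=\sigma_{m-k+1,N}$) yields a constant $C(m,m)$ independent of $N$ with
\[
\mathrm{Var}\bigl(\Tr(A_N)\bigr) = \mathrm{Cov}\bigl(\Tr(A_N),\overline{\Tr(A_N)}\bigr) \leq C(m,m).
\]

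Next, dividing by $M_N^2$ gives $\mathrm{Var}\bigl(\tr(A_N)\bigr) \leq C(m,m)/M_N^2$. Combined with the hypothesis $E\circ\tr(A_N)\to L$, Chebyshev's inequality for complex random variables gives, for every $\varepsilon>0$,
\[
P\bigl( |\tr(A_N(\omega)) - E\circ\tr(A_N)| > \varepsilon \bigr) \leq \frac{C(m,m)}{\varepsilon^2 M_N^2}.
\]
Since $\{M_N\}_N$ is a strictly increasing sequence of positive integers we have $M_N\geq N$, so $\sum_N 1/M_N^2 < \infty$. The Borel--Cantelli lemma then implies that almost surely $|\tr(A_N(\omega)) - E\circ\tr(A_N)|>\varepsilon$ holds for only finitely many $N$; intersecting over a countable sequence $\varepsilon_k\to 0$ yields $\tr(A_N(\omega)) - E\circ\tr(A_N)\to 0$ almost surely. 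Combined with $E\circ\tr(A_N)\to L$, this gives the claim.

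No step is really an obstacle: the combinatorial work has already been done in Lemma~\ref{lemma:cov}, and the only point that requires care is checking that the variance of $\Tr$ (not merely a covariance of $\Tr$ with the trace of a possibly different product) is controlled, which is handled by the self-adjointness observation above. Once the $O(M_N^{-2})$ variance bound for $\tr$ is in hand, summability along the sparse sequence $\{M_N\}_N$ makes Borel--Cantelli immediate and no subsequence trick is needed.
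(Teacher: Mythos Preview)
Your proof is correct and follows the same approach as the paper: invoke Lemma~\ref{lemma:cov} to get an $O(M_N^{-2})$ variance bound for the normalized trace, then apply Chebyshev and Borel--Cantelli. You are in fact more careful than the paper on one point: the paper simply asserts that Lemma~\ref{lemma:cov} bounds $\mathrm{Var}(f_N)$, whereas you explicitly justify why the complex variance $\mathrm{Cov}(\Tr A_N,\overline{\Tr A_N})$ is of the form handled by the lemma, via the self-adjointness of each $W_N^{\sigma_{s,N}}$. Note, though, that this step (and Lemma~\ref{lemma:cov} itself) uses that the $\sigma_{s,N}$ are \emph{symmetric}, a hypothesis that is missing from the theorem's statement but is satisfied in all the paper's applications; you are right to make it explicit.
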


\begin{proof}
 For each positive integer 
 $ N $,
 consider
  $ f_N : \Omega \rightarrow \mathbb{C} $
  given by
 \begin{align*}
 f_N (\omega) =
 \tr \big( 
    W_N(\omega)^{ \sigma_{ 1, N}}
     W_N(\omega)^{ \sigma_{2, N}} 
       \cdots
        W_N(\omega)^{ \sigma_{ m, N} } 
  \big).
 \end{align*}
For Lemma \ref{lemma:cov}, there is some positive 
$ C $
such that
$ \displaystyle \textrm{Var}( f_N ) < \frac{C}{N^2} $ 
so, for any
$ \varepsilon > 0 $,
 Chebyshev's inequality gives
 \begin{align*}
 P \big(  \{ \omega \in \Omega :\ | f_N ( \omega ) - E ( f_N ) |
 \geq \varepsilon 
   \} \big)
  \leq
  \frac{C}{\varepsilon^2 N^2 },
 \end{align*}
 hence
 \begin{align*}
 \sum_{ N \geq 1 }
 P \big( 
 \{ \omega \in \Omega :\ | f_N ( \omega ) - E ( f_N ) |
  \geq \varepsilon 
    \}
 \big)
 < 
 \infty
 \end{align*}
 and the almost sure convergence for 
 $ f_N $
 follows from the Borel-Cantelli Lemma.
\end{proof}




\bibliographystyle{alpha}


\end{document}